\newtheorem{theorem}{Theorem}[section]
\newtheorem{lemma}[theorem]{Lemma}
\newtheorem{corollary}[theorem]{Corollary}
\newtheorem{proposition}[theorem]{Proposition}
\newcommand{\reals}{\mathbb{R}}
\newcommand{\complex}{\mathbb{C}}
\newcommand{\field}{\mathbb F}
\newcommand{\img}{\textup{Im}}
\newcommand{\chara}{\textup{char}}
\newcommand{\diag}{\textup{diag}}
\newcommand{\tr}{\textup{tr}}
\journal{Linear Algebra and its Applications}
\begin{document}

\begin{frontmatter}

%% Title, authors and addresses

%% use the tnoteref command within \title for footnotes;
%% use the tnotetext command for theassociated footnote;
%% use the fnref command within \author or \affiliation for footnotes;
%% use the fntext command for theassociated footnote;
%% use the corref command within \author for corresponding author footnotes;
%% use the cortext command for theassociated footnote;
%% use the ead command for the email address,
%% and the form \ead[url] for the home page:
%% \title{Title\tnoteref{label1}}
%% \tnotetext[label1]{}
%% \author{Name\corref{cor1}\fnref{label2}}
%% \ead{email address}
%% \ead[url]{home page}
%% \fntext[label2]{}
%% \cortext[cor1]{}
%% \affiliation{organization={},
%%             addressline={},
%%             city={},
%%             postcode={},
%%             state={},
%%             country={}}
%% \fntext[label3]{}

\title{On commutators of unipotent matrices of index 2} %% Article title

%% use optional labels to link authors explicitly to addresses:
%% \author[label1,label2]{}
%% \affiliation[label1]{organization={},
%%             addressline={},
%%             city={},
%%             postcode={},
%%             state={},
%%             country={}}
%%
%% \affiliation[label2]{organization={},
%%             addressline={},
%%             city={},
%%             postcode={},
%%             state={},
%%             country={}}
\author[sd]{Kennett L. Dela Rosa}
\author[sd]{Juan Paolo C. Santos} %% Author name

%% Author affiliation
\affiliation[sd]{organization={Institute of Mathematics, University of the Philippines Diliman},%Department and Organization
           addressline={C.P. Garcia Ave., UP Campus, Diliman}, 
            city={Quezon City},
            postcode={1101},
            country={Philippines}}

%% Abstract
\begin{abstract}
%% Text of abstract
 A commutator of unipotent matrices of index 2 is a matrix of the form $XYX^{-1}Y^{-1}$, where $X$ and $Y$ are unipotent matrices of index 2, that is, $X\ne I_n$, $Y\ne I_n$, and $(X-I_n)^2=(Y-I_n)^2=0_n$. If $n>2$ and $\mathbb F$ is a field with $|\field|\geq 4$, then it is shown that every $n\times n$ matrix over $\field$ with determinant 1 is a product of at most four commutators of unipotent matrices of index 2. Consequently, every $n\times n$ matrix over $\field$ with determinant 1 is a product of at most eight unipotent matrices of index 2. Conditions on $\field$ are given that improve the upper bound on the commutator factors from four to three or two. The situation for $n=2$ is also considered. This study reveals a connection between factorability into commutators of unipotent matrices and properties of $\field$ such as its characteristic or its set of perfect squares.
\end{abstract}

%%Graphical abstract
%\begin{graphicalabstract}
%\includegraphics{grabs}
%\end{graphicalabstract}

%%Research highlights
%\begin{highlights}
%\item Research highlight 1
%\item Research highlight 2
%\end{highlights}

%% Keywords
\begin{keyword}
%% keywords here, in the form: keyword \sep keyword
Unipotent matrices \sep Commutators \sep Special Linear Group
%% PACS codes here, in the form: \PACS code \sep code

%% MSC codes here, in the form: \MSC code \sep code
%% or \MSC[2008] code \sep code (2000 is the default)
\MSC[2020] 15A21 \sep 15A23 \sep 15B33 \sep 15B99 \sep 20H20

\end{keyword}
%\MSC[2020] 15A21 \sep 15A23 \sep 15A30 \sep 16S50 \sep 16U99 \sep 20H20
\end{frontmatter}

%% Add \usepackage{lineno} before \begin{document} and uncomment 
%% following line to enable line numbers
%% \linenumbers

%% main text
%%

%% Use \section commands to start a section

%%%%%%%%%%%%%%%%%%%%%%%%%%%%%%%%%%%%%%%%%%%%%%%%%%%%%%%%%%%%%%%%%%%%%%
%%%%%%%%%%%%%%%%%%%%%%%%%%%%%%%%%%%%%%%%%%%%%%%%%%%%%%%%%%%%%%%%%%%%%%
%%%%%%%%%%%%%%%%%%%%%%%%%INTRODUCTION$$%%%%%%%%%%%%%%%%%%%%%%%%%%%%%%%
%%%%%%%%%%%%%%%%%%%%%%%%%%%%%%%%%%%%%%%%%%%%%%%%%%%%%%%%%%%%%%%%%%%%%%
%%%%%%%%%%%%%%%%%%%%%%%%%%%%%%%%%%%%%%%%%%%%%%%%%%%%%%%%%%%%%%%%%%%%%%
\section{Introduction}
\label{intro}

Throughout this paper, $\mathbb F$ denotes a field and $\textup{char}(\mathbb F)$ its characteristic. Let $M_n(\mathbb F)$ be the set of all $n\times n$ matrices with entries in $\mathbb F$, $GL_n(\field)$ be the group of all $n\times n$ matrices with nonzero determinant, and $SL_n(\mathbb F)$ be the group of all $n\times n$ matrices with determinant 1. The $n\times n$ identity matrix is denoted by  $I_n$ while the $n\times n$ zero matrix is denoted by $0_n$. If $X,Y\in GL_n(\mathbb F)$, set $[X,Y]:=XYX^{-1}Y^{-1}.$

An $A\in M_n(\mathbb F)$ is a (\textit{multiplicative\textup{)} commutator} if $A=[X,Y]$ for some $X,Y\in GL_n(\mathbb F)$. Since the determinant is multiplicative, it follows that
\begin{center}
$\det(A)=\det(XYX^{-1}Y^{-1})=\det(X)\det(Y)\det(X)^{-1}\det(Y)^{-1}=1$.    
\end{center} %An $A\in M_n(\mathbb F)$ is an \textit{involution} if $A^2=I_n$. 
Hence, products of commutators are in $SL_n(\mathbb F)$. In fact, more is true: Shoda showed in \cite{shoda} that for an algebraically closed $\mathbb F$, every matrix in $SL_n(\mathbb F)$ is a commutator while Thompson in \cite{thompson} extended this result for all fields except for the case when $n=2$ and $|\field|=2$. Hence, additional conditions are usually imposed on $X$ and $Y$, and it is natural to ask in this case if the commutators generate certain matrix groups. For example, commutators of $J$-symmetries were considered in \cite{dd}. In \cite{zheng}, it was shown that every matrix in $SL_n(\mathbb R)$ or $SL_n(\mathbb C)$ is a product of at most two commutators of involutions. The result was extended to arbitrary fields with $\chara(\field)\ne2$ in \cite{hou2} and then in \cite{son} where the characteristic condition was removed but the assumption is that $|\field|\geq 3$.

%it was shown that $S$ is a product of at most two commutators of involutions provided that , while some observations have been made on the factorization of matrices in $SL_n(\mathbb F)$ when $|\mathbb F|=2$. 

An $A\in M_n(\mathbb F)$ is said to be a \textit{unipotent matrix of index $k$} or \textit{${\cal U}_k$-matrix} if \begin{center}$(A-I_n)^k=0_n$ and $(A-I_n)^{k-1}\ne0_n$.\end{center} In \cite{hou}, Hou proved that every matrix in $SL_n(\complex)$ is a product of at most two commutators of ${\cal U}_2$-matrices. Since a commutator of ${\cal U}_2$-matrices is a product of two ${\cal U}_2$-matrices (see Proposition \ref{rmk:houtowang} for a generalization), Hou's result \cite[Theorem 1.1]{hou} implies Wang and Wu's result \cite[Theorem 3.5]{wangwu}, which asserts that every matrix in $SL_n(\mathbb C) $ is a product of at most four ${\cal U}_2$-matrices. Recently, Ha and Toan in \cite{ha} extended Hou's result to matrices in $GL_n(\mathbb H)$ where $\mathbb H$ is the real quaternion division ring. The authors showed that every matrix in $SL_n(\mathbb H)$ is a product of at most three commutators of ${\cal U}_2$-matrices. Products of ${\cal U}_2$-matrices in $GL_n(D)$ was investigated in \cite{bien} where $D$ is a division ring.

%\indent In this paper, we consider the decomposition of matrices into commutators of unipotent matrices of index 2 over a given field $\field$. 

\indent The goal of the paper is to generalize the main results in \cite{hou} and \cite{wangwu} for arbitrary fields $\mathbb F$. In particular, we prove that every matrix in $SL_n(\field)$ is a product of at most four commutators of ${\cal U}_2$-matrices provided that  $n>2$ and $|\field|\geq 4$ (Theorem \ref{mainthm}). For $SL_2(\field)$, the upper bound can be improved: if $|\mathbb F|\geq 4$, then the upper bound is \textit{three} or \textit{two} depending on $\mathbb F$ (Theorem \ref{cor:SL2}) but if $|\mathbb F|\leq 3$, then only those in the commutator subgroup $SL_2(\mathbb F)'$ can be considered and in this case, $|\mathbb{F}|-1$ is the upper bound (Theorem \ref{thm:Z2Z3}). We also provide conditions on $\field$ that improve the upper bound to \textit{three} or \textit{two} (Theorems \ref{mainthm:char2}-\ref{mainthm6}). 

%As an immediate consequence of Theorems \ref{cor:SL2} and \ref{mainthm}, we obtain an analogue of Wang and Wu's result in \cite{wangwu} (Corollary \ref{cor:wangwu}). Theorem \ref{mainthm6} is a generalization of \cite[Theorem 1.1]{hou} and \cite[Theorem 3.5]{wangwu}.

%For $SL_2(\field)$, we find the least upper bound over matrices in $SL_2(\mathbb F)$ of the number of commutator factors in the commutator subgroup $SL_2(\mathbb F)'$. If $|\mathbb F|\geq 4$, then \textit{three} is the least upper bound for some $\mathbb F$ (Theorem \ref{cor:SL2}) but if $|\mathbb F|\leq 3$, then only those in $SL_2(\mathbb F)'$ can be considered and in this case, \textit{two} is the least upper bound for some $\mathbb F$ (Theorem \ref{thm:Z2Z3}). We also provide different conditions on $\field$ that reduce the number of commutator of ${\cal U}_2$-matrix factors from four to \textit{two} or \textit{three} (Proposition \ref{mainthm6}-\ref{mainthm:char2}). As a consequence of Theorem \ref{mainthm}, we obtain an analogue of Wang and Wu's result in \cite{wangwu}, namely, every matrix in $SL_n(\field)$ is a product of at most eight ${\cal U}_2$-matrices provided that $|\field|\notin\{2,3,4\}$ and $n>2$ (Corollary \ref{cor:wangwu}).

The paper is organized as follows. In Section \ref{unipmat}, we prove properties of unipotent matrices and their commutators. In Section \ref{sec:SL2}, we first make observations for $SL_2(\mathbb F)$ and in Section \ref{sec:SLn}, we extend some results to the general case $SL_n(\field)$. In Section \ref{sec:reduc}, we identify conditions on $\field$ that improve the upper bound on the commutator factors in Theorem \ref{mainthm}. 

%%%%%%%%%%%%%%%%%%%%%%%%%%%%%%%%%%%%%%%%%%%%%%%%%%%%%%%%%%%%%%%%%%%%%%
%%%%%%%%%%%%%%%%%%%%%%%%%%%%%%%%%%%%%%%%%%%%%%%%%%%%%%%%%%%%%%%%%%%%%%
%%%%%%%%%%%%%%%%%%%%%%UNIPOTENT MATRICES%%%%%%%%%%%%%%%%%%%%%%%%%%%%%%
%%%%%%%%%%%%%%%%%%%%%%%%%%%%%%%%%%%%%%%%%%%%%%%%%%%%%%%%%%%%%%%%%%%%%%
%%%%%%%%%%%%%%%%%%%%%%%%%%%%%%%%%%%%%%%%%%%%%%%%%%%%%%%%%%%%%%%%%%%%%%
\section{Unipotent matrices, their commutators, and other preliminaries}
\label{unipmat}

%In this section, we discuss properties of unipotent matrices and their commutators. 

%Let $\field$ be a field, $A\in M_m(\field)$ and $B\in M_n(\field)$. We denote by $A^\top$, $\tr(A)$, and $\det(A)$ the transpose, trace, and determinant of $A$, respectively. Also, we denote by $\sigma(A)$, $m_A(x)$, and $p_A(x)$ the set of all eigenvalues, the minimal polynomial, and the characteristic polynomial of $A$, respectively. The direct sum of $A$ and $B$ is defined as the matrix
%$A\oplus B=\begin{bmatrix}
%A & 0_{m,n} \\
%0_{n,m} & B 
%\end{bmatrix}$. For $k\geq2$, the direct sum of the matrices $A_1,...,A_k$ is defined as %$$\bigoplus\limits_{i=1}^kA_i=A_1\oplus\cdots\oplus A_k=\begin{bmatrix}
 %   A_1 & &\\
 %    & \ddots &\\
 %    & & A_k
%\end{bmatrix}.$$

%We denote by $\textup{diag}(a_1,...,a_n)$ the $n\times n$ diagonal matrix with $a_i$ in the $(i,j)$ entry if $i=j$ and 0, otherwise. We denote by $J_n(\lambda)$ the $n\times n$ upper triangular Jordan block with the eigenvalue $\lambda$.

%Let $\mathbb F$ be a field. 
Recall that $N\in M_n(\field)$ is said to be a \textit{nilpotent matrix of index $k$} if $N^k=0_n$ and $N^{k-1}\ne0_n$. We only consider $n\geq2$ since $\field$ has no nilpotent elements and $k\geq 2$ since $0_n$ is the only index $1$ nilpotent matrix.

Let $A\in M_n(\mathbb F)$ be a ${\cal U}_k$-matrix. Observe that $A=I_n+N$ for some nilpotent matrix $N$ of index $k$. The minimal polynomial of $A$ is $m_A(x)=(x-1)^k$ which implies that its characteristic polynomial is $p_A(x)=(x-1)^n$ and $k\leq n$. Since the only root of $p_A$ is $1\in\field$ and its multiplicity in $m_A$ is $k$, it follows that the Jordan canonical form of $A$ is composed of Jordan blocks $J_{s}(1)$ corresponding to $1$ and the largest Jordan block is $J_k(1)$ (in general, $J_s(\lambda)$ denotes the upper triangular $s\times s$ Jordan block with eigenvalue $\lambda$). These observations are summarized in the following result where $\bigoplus\limits_{s=1}^{\ell } A_s$ denotes the direct sum of matrices $A_1,\ldots,A_\ell$ and $\sigma(A)$ denotes the set of eigenvalues of $A$. 

    \begin{proposition} \label{prop:evalue}
       Let $A\in M_n(\mathbb F)$ be a ${\cal U}_k$-matrix. The Jordan canonical form of $A$ is given by $\bigoplus\limits_{i=1}^\ell J_{n_i}(1)$ for some $\ell\in\mathbb N$ such that $k=n_1\geq\cdots\geq n_\ell>0$. In particular, $k\leq n$, $\sigma(A)=\{1\}$, $\det(A)=1$, and $\tr(A)=n$. 
    \end{proposition}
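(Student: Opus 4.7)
The plan is to formalize the informal observations that appear in the paragraph immediately preceding the statement. Essentially everything needed has already been noted in the text, so the proof proposal just has to assemble these facts in the right order and invoke the Jordan canonical form.

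First I would write $A = I_n + N$ where $N = A - I_n$. By definition of a $\mathcal{U}_k$-matrix, $N^k = 0_n$ and $N^{k-1} \neq 0_n$, so $N$ is nilpotent of index $k$. From this, the polynomial $(x-1)^k$ annihilates $A$ but $(x-1)^{k-1}$ does not, so the minimal polynomial is exactly $m_A(x) = (x-1)^k$. Standard theory then gives $k \leq n$ because $\deg m_A \leq \deg p_A = n$.

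Next I would identify the characteristic polynomial. Since $m_A$ and $p_A$ share the same set of irreducible factors over $\mathbb{F}$, and since $m_A(x) = (x-1)^k$ has only the root $1$, the characteristic polynomial must be of the form $p_A(x) = (x-1)^n$. This immediately yields $\sigma(A) = \{1\}$, and as $\det(A)$ and $\tr(A)$ are (up to sign) coefficients of $p_A$ expressible through the eigenvalues counted with multiplicity, we obtain $\det(A) = 1^n = 1$ and $\tr(A) = n \cdot 1 = n$.

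Finally I would invoke the Jordan canonical form. Since the only eigenvalue is $1$, every Jordan block of $A$ has the form $J_{n_i}(1)$, so $A$ is similar to $\bigoplus_{i=1}^{\ell} J_{n_i}(1)$ for some positive integers $n_1 \geq \cdots \geq n_\ell$ summing to $n$. The size of the largest Jordan block corresponding to an eigenvalue equals the multiplicity of that eigenvalue in the minimal polynomial, hence $n_1 = k$. This completes all the claims.

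There is no real obstacle here: the result is essentially a dictionary between the unipotent condition on $A$ and the Jordan form of $A - I_n$, and the preparatory paragraph in the paper already spells out the key relations between $m_A$, $p_A$, and the Jordan block structure. The only mild care needed is to cite cleanly that the size of the largest Jordan block for a given eigenvalue equals the multiplicity of the corresponding linear factor in the minimal polynomial.
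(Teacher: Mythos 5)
Your proposal is correct and follows essentially the same route as the paper, which likewise writes $A=I_n+N$ with $N$ nilpotent of index $k$, reads off $m_A(x)=(x-1)^k$ and $p_A(x)=(x-1)^n$, and then deduces the Jordan structure from the fact that the largest Jordan block for an eigenvalue has size equal to that eigenvalue's multiplicity in the minimal polynomial. The only difference is that you spell out the standard facts (shared irreducible factors of $m_A$ and $p_A$, trace and determinant as coefficients of $p_A$) a bit more explicitly than the paper's summary paragraph does.
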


For $A\in M_n(\field)$ and $P\in GL_n(\field)$, $(PAP^{-1}-I_n)^\ell=P(A-I_n)^\ell P^{-1}$ for any $\ell\in \mathbb N$. If $A\in GL_n(\mathbb F)$, then $A^{-1}(A-I_n)=I_n-A^{-1}=(A-I_n)A^{-1}.$ Hence, $(A^{-1}-I_n)^\ell=(-A^{-1})^\ell (A-I_n)^\ell $ for any $\ell\in \mathbb N$. These observations guarantee the next result.
\begin{proposition}\label{prop:papinv}
    Let $A\in SL_n(\mathbb F)$ and $P\in GL_n(\mathbb F)$. The following are equivalent:
    \begin{enumerate}[(i)]
        \item $A$ is a ${\cal U}_k$-matrix;
        \item $A^{-1}$ is a ${\cal U}_k$-matrix;
        \item $PAP^{-1}$ is a ${\cal U}_k$-matrix.
    \end{enumerate}
\end{proposition}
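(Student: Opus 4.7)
The plan is to use the two algebraic identities that the preceding paragraph has already recorded, namely
\[
(PAP^{-1}-I_n)^\ell = P(A-I_n)^\ell P^{-1} \quad \text{and} \quad (A^{-1}-I_n)^\ell = (-A^{-1})^\ell (A-I_n)^\ell,
\]
valid for every $\ell \in \mathbb{N}$. Since being a ${\cal U}_k$-matrix is entirely determined by the two conditions $(A-I_n)^k = 0_n$ and $(A-I_n)^{k-1}\ne 0_n$, the proof reduces to transferring these two conditions through the identities.

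For the equivalence (i) $\Leftrightarrow$ (iii), I would observe that conjugation by the invertible matrix $P$ is injective, so $P(A-I_n)^\ell P^{-1} = 0_n$ if and only if $(A-I_n)^\ell = 0_n$. Applying this twice, once with $\ell = k$ and once with $\ell = k-1$, yields that $PAP^{-1}$ is a ${\cal U}_k$-matrix exactly when $A$ is.

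For (i) $\Leftrightarrow$ (ii), I would use that $(-A^{-1})^\ell$ is a unit in $M_n(\mathbb{F})$, so the second identity forces $(A^{-1}-I_n)^\ell = 0_n$ if and only if $(A-I_n)^\ell = 0_n$; again, taking $\ell = k$ and $\ell = k-1$ delivers the equivalence. There is no genuine obstacle here, as the conclusion is essentially just two applications of the two identities; the only thing to be careful about is applying the nonvanishing condition with $\ell = k-1$ in addition to the vanishing condition with $\ell = k$, so that the index $k$ (and not merely unipotence of some index) is preserved under inversion and conjugation.
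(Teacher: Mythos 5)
Your proof is correct and follows essentially the same route as the paper, which derives the proposition directly from the two identities $(PAP^{-1}-I_n)^\ell=P(A-I_n)^\ell P^{-1}$ and $(A^{-1}-I_n)^\ell=(-A^{-1})^\ell(A-I_n)^\ell$ stated just before it. Your explicit remark that both the vanishing condition at $\ell=k$ and the nonvanishing condition at $\ell=k-1$ must be transferred is exactly the detail the paper leaves implicit.
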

%Then $A$ is a ${\cal U}_k$-matrix if and only if $PAP^{-1}$ is a ${\cal U}_k$-matrix. Moreover, if $A$ is a ${\cal U}_k$-matrix, then so is $A^{-1}$.

If $n,k\in\mathbb N$ such that $n\geq k\geq 2$, then $I_{n-k}\oplus J_k(1)$ is a ${\cal U}_k$-matrix (only $J_k(1)$ is present when $n=k$). Hence, $I_n=[I_{n-k}\oplus J_k(1),I_{n-k}\oplus J_k(1)]$. Consequently, the set of commutators of ${\cal U}_k$-matrices is non-empty. 

  Let $[X,Y]=XYX^{-1}Y^{-1}$ where $X,Y$ are ${\cal U}_k$-matrices. By Proposition \ref{prop:papinv}, $YX^{-1}Y^{-1}$ is also a ${\cal U}_k$-matrix. The next result is immediate.
\begin{proposition}\label{rmk:houtowang}
        A commutator of ${\cal U}_k$-matrices is a product of at most two ${\cal U}_k$-matrices.
    \end{proposition}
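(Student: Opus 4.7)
The proposition is an immediate consequence of Proposition \ref{prop:papinv}, and indeed the authors essentially announce the argument in the sentence preceding the statement. My plan is simply to exhibit the commutator as a product of two $\mathcal{U}_k$-matrices by rewriting
\[
[X,Y] = XYX^{-1}Y^{-1} = X\cdot\bigl(YX^{-1}Y^{-1}\bigr),
\]
where $X,Y$ are $\mathcal{U}_k$-matrices, and then verifying that each of the two factors on the right-hand side is a $\mathcal{U}_k$-matrix.

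For the second factor, I would invoke Proposition \ref{prop:papinv} twice. First, the equivalence $(i)\Leftrightarrow(ii)$ says that $X^{-1}$ is itself a $\mathcal{U}_k$-matrix because $X$ is. Second, the equivalence $(i)\Leftrightarrow(iii)$, applied to $X^{-1}$ and the conjugating matrix $Y\in GL_n(\field)$, shows that $YX^{-1}Y^{-1}$ is again a $\mathcal{U}_k$-matrix. Combined with the hypothesis that $X$ is a $\mathcal{U}_k$-matrix, this exhibits $[X,Y]$ as a product of two such matrices.

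The only reason to say \emph{at most} two rather than exactly two is that the commutator may collapse (for example, $[X,X]=I_n$ is the empty product), so I would include a brief sentence acknowledging this convention. No nontrivial obstacle arises: the entire argument rests on Proposition \ref{prop:papinv}, which is already in place, and there is no calculation to perform.
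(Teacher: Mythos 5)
Your proposal is correct and is exactly the paper's argument: the authors factor $[X,Y]=X\cdot(YX^{-1}Y^{-1})$ and cite Proposition \ref{prop:papinv} to see that the second factor (and hence both factors) is a $\mathcal{U}_k$-matrix. Nothing further is needed.
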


\begin{proposition}\label{id}
The identity matrix is the only scalar matrix that is a commutator of ${\cal U}_k$-matrices.
\end{proposition}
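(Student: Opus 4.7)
The plan is to suppose that $\lambda I_n = [X,Y]$ for some ${\cal U}_k$-matrices $X$ and $Y$ with $\lambda \in \field$, and to deduce that $\lambda = 1$. Since every commutator lies in $SL_n(\field)$, the scalar $\lambda$ automatically satisfies $\lambda^n = 1$, and in particular $\lambda \ne 0$, so the rewriting below makes sense.

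The main step is to rearrange the commutator relation into the equivalent form
\begin{equation*}
XYX^{-1} = \lambda Y
\end{equation*}
and then compare the spectra of the two sides. By Proposition \ref{prop:papinv}, the conjugate $XYX^{-1}$ of the ${\cal U}_k$-matrix $Y$ is itself a ${\cal U}_k$-matrix, and Proposition \ref{prop:evalue} then gives $\sigma(XYX^{-1}) = \{1\}$. On the right-hand side, since $\sigma(Y) = \{1\}$, the matrix $\lambda Y$ has every eigenvalue equal to $\lambda$, i.e., $\sigma(\lambda Y) = \{\lambda\}$. Equating these two spectra forces $\lambda = 1$, which is the desired conclusion.

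The argument has no real obstacle; it is essentially a one-line spectral comparison that leans entirely on the two propositions just established in this section. It is worth noting, however, that merely taking determinants of $XYX^{-1} = \lambda Y$ yields only the weaker conclusion $\lambda^n = 1$, so the full eigenvalue information carried by Proposition \ref{prop:evalue}, rather than just the determinant, is what does the essential work.
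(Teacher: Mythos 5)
Your proof is correct and is essentially the paper's argument: the paper rearranges the relation to $Y(\lambda X)Y^{-1}=X$, concluding that $\lambda X$ is a ${\cal U}_k$-matrix with spectrum $\{1\}$, while you rearrange to $XYX^{-1}=\lambda Y$ and compare spectra — the same one-line spectral comparison with the roles of $X$ and $Y$ swapped. No substantive difference.
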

\begin{proof}
Let $\lambda\in\mathbb F$. Suppose $\lambda I_n=[X,Y]$ where $X,Y$ are ${\cal U}_k$-matrices. Then $X=[X,Y](YXY^{-1})=Y(\lambda X) Y^{-1}$. By Propositions \ref{prop:evalue}-\ref{prop:papinv}, $\lambda X$ is a ${\cal U}_k$-matrix and so $\lambda=1$ since $1$ is the only eigenvalue of $X.$
\end{proof}

    For any $P,X,Y\in GL_n(\field)$, note that $[X,Y]^{-1}=[Y,X]$ and $P[X,Y]P^{-1}=[PXP^{-1},PYP^{-1}]$. These imply the following.
    
    \begin{proposition} \label{prop:rmk2.1}
        Let $A\in SL_n(\mathbb F)$ and $P\in GL_n(\field)$. The following are equivalent:
        \begin{enumerate}[(i)]
        \item $A$ is a product of $r$ commutators of ${\cal U}_k$-matrices;
        \item $A^{-1}$ is a product of $r$ commutators of ${\cal U}_k$-matrices;
        \item $PAP^{-1}$ is a product of $r$ commutators of ${\cal U}_k$-matrices.
        \end{enumerate}
    \end{proposition}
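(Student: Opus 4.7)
The plan is to verify each of the two equivalences (i)$\Leftrightarrow$(ii) and (i)$\Leftrightarrow$(iii) directly, using only the two identities stated just before the proposition together with Proposition \ref{prop:papinv}. No deeper machinery should be needed; the whole content of the statement is that taking inverses and conjugating by a fixed $P$ are operations that preserve the property of being a product of $r$ commutators of $\mathcal{U}_k$-matrices.

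For (i)$\Rightarrow$(ii), I would start from a factorization $A=[X_1,Y_1]\cdots[X_r,Y_r]$ with each $X_i,Y_i$ a $\mathcal{U}_k$-matrix. Applying the identity $[X,Y]^{-1}=[Y,X]$ termwise (and reversing the order of the factors) yields
\[
A^{-1}=[Y_r,X_r]\,[Y_{r-1},X_{r-1}]\cdots[Y_1,X_1],
\]
which displays $A^{-1}$ as a product of $r$ commutators of $\mathcal{U}_k$-matrices. The reverse implication (ii)$\Rightarrow$(i) is the same argument applied to $A^{-1}$ in place of $A$, using $(A^{-1})^{-1}=A$.

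For (i)$\Rightarrow$(iii), starting from the same factorization of $A$ and conjugating by $P$, the identity $P[X,Y]P^{-1}=[PXP^{-1},PYP^{-1}]$ gives
\[
PAP^{-1}=[PX_1P^{-1},PY_1P^{-1}]\cdots[PX_rP^{-1},PY_rP^{-1}].
\]
By Proposition \ref{prop:papinv}, each $PX_iP^{-1}$ and each $PY_iP^{-1}$ is again a $\mathcal{U}_k$-matrix, so the displayed factorization witnesses (iii). For (iii)$\Rightarrow$(i), I would apply the same reasoning to $PAP^{-1}$ using $P^{-1}$ in place of $P$, recovering $A=P^{-1}(PAP^{-1})P$ as a product of $r$ commutators of $\mathcal{U}_k$-matrices.

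There is essentially no obstacle here: the two displayed identities for $[X,Y]^{-1}$ and $P[X,Y]P^{-1}$ reduce everything to verifying that inversion and conjugation preserve the class of $\mathcal{U}_k$-matrices, which is exactly Proposition \ref{prop:papinv}. The only care needed is to keep track of the order reversal when inverting a product of commutators, and to recognize that the argument for the reverse implications is obtained simply by replacing $A$ by $A^{-1}$ or $P$ by $P^{-1}$ so no separate work is required.
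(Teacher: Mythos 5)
Your proposal is correct and follows essentially the same route as the paper: invert the factorization termwise via $[X,Y]^{-1}=[Y,X]$ with the order reversed, and conjugate termwise via $P[X,Y]P^{-1}=[PXP^{-1},PYP^{-1}]$ together with Proposition \ref{prop:papinv}, obtaining the reverse implications by substituting $A^{-1}$ for $A$ and $P^{-1}$ for $P$. No gaps.
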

\begin{proof} Suppose $A=\prod\limits_{i=1}^r[X_i,Y_i]$ where $X_i,Y_i$ are ${\cal U}_k$-matrices for all $i\in\{1,...,r\}$. Equivalently, $A^{-1}=\prod\limits_{i=1}^r[Y_{r+1-i},X_{r+1-i}]$, proving the equivalence of (i) and (ii). For the equivalence of (i) and (iii), observe that $PAP^{-1}=\prod\limits_{i=1}^r[PX_iP^{-1},PY_iP^{-1}].$ Since for each $i$, $PX_iP^{-1}$ and $PY_iP^{-1}$ are ${\cal U}_k$-matrices due to Proposition \ref{prop:papinv}, it follows that $PAP^{-1}$ is a product of $r$ commutators of ${\cal U}_k$-matrices. On the other hand, if $PAP^{-1}$ is a product of $r$ commutators of ${\cal U}_k$-matrices, then so is $A$ by an analogous argument applied to $P^{-1}(PAP^{-1})P=A$. 
\end{proof}
%We also provide some results on direct sums of matrices.
    
%\begin{proposition}\label{prop:rmk2.1a}
 %   If $A\in M_m(\mathbb F)$ and $B\in M_n(\mathbb F)$ are unipotent matrices of index $k_1$ and $k_2$, respectively, then $A\oplus B$ is a unipotent matrix of index $\max\{k_1,k_2\}$.
%\end{proposition}
%\begin{proof}
 %   Without loss of generality, suppose that $k_1\leq k_2$. It follows that $(A-I_m)^{k_2}=0_m$. Now observe that 
  %  \begin{center}
   %     $(A\oplus B-I_{m+n})^{k_2}=(A-I_m)^{k_2}\oplus(B-I_n)^{k_2}=0_{m+n}$.
   % \end{center}
    %Since $(B-I_n)^{k_2-1}\ne0_n$, we have that $(A\oplus B-I_{m+n})^{k_2-1}\ne0_{m+n}$. Therefore, $A\oplus B$ is a unipotent matrix of index $k_2=\max\{k_1,k_2\}$.
%\end{proof}

%\begin{proposition}\label{id}
%    Let $n,k\in\mathbb N$ such that $n,k>1$. The identity matrix $I_n$ is a commutator of unipotent matrices of index $k$.
%\end{proposition}

%If $m\in \mathbb N$ and $X,Y\in GL_n(\mathbb F)$, then $I_m\oplus [X,Y]=[I_m\oplus X, I_m\oplus Y].$ This observation is used to prove the following.

  Suppose $X,Y\in GL_m(\field)$ and $V,W\in GL_n(\field)$. Then $[X\oplus V,Y\oplus W]=[X,Y]\oplus [V,W]$. This observation is used to prove the following.
  
  %\begin{proposition}\label{prop:I_2+prod}
  %Let $A\in SL_m(\field)$ and $B\in SL_n(\field)$ be products of $r$ and $s$ commutators of ${\cal U}_k$-matrices, respectively. Then $A\oplus B$ is a product of at most $\max\{r,s\}$ commutators of ${\cal U}_k$-matrices. In particular, $A\oplus I_n$ and $I_m\oplus B$ are products of at most $r$ and $s$ commutators of ${\cal U}_k$-matrices, respectively.
  
%        For any $m\in \mathbb N$, $I_m\oplus A$ and $A\oplus I_m$ are products of $r$ commutators of ${\cal U}_k$-matrices provided that $A\in SL_n(\mathbb F)$ is a product of $r$ commutators of ${\cal U}_k$-matrices.
    
   % \end{proposition}
 %   \begin{proof}
 %      Since $A\oplus I_m$ is similar to $I_m\oplus A$, it suffices to prove the claim for $I_m\oplus A$ by Proposition \ref{prop:rmk2.1}. Suppose that $A=\prod\limits_{i=1}^r[X_i,Y_i]$ where $X_i,Y_i$ are ${\cal U}_k$-matrices for all $i\in\{1,...,r\}$. Observe that
  %          $I_m\oplus A=\prod\limits_{i=1}^r[I_m\oplus X_i,I_m\oplus Y_i]$
  %      and $I_m\oplus X_i$, $I_m\oplus Y_i$ are ${\cal U}_k$-matrices for all $i\in\{1,...,r\}$.
  %  \end{proof}

  %If $X,Y,W,Z$ are unipotent matrices of index $k$, then so are $X\oplus W$ and $Y\oplus Z$.
    
   % Then $[X,Y]\oplus[W,Z]=[X\oplus W,Y\oplus Z]$ is a commutator of unipotent matrices of index $\max\{k_1,k_2\}$. This follows from $[X,Y]\oplus [W,Z]=(X\oplus W)(Y\oplus Z)(X\oplus W)^{-1}(Y\oplus Z)^{-1}=[X\oplus W,Y\oplus Z]$ where $X\oplus W$, $Y\oplus Z$ are unipotents of index $\max\{k_1,k_2\}$ by Proposition \ref{prop:rmk2.1a}.
\begin{proposition} \label{prop:I_2+prod}
  Let $A\in SL_m(\field)$ and $B\in SL_n(\field)$ be products of $r$ and $s$ commutators of ${\cal U}_k$-matrices, respectively. Then $A\oplus B$ is a product of at most $\max\{r,s\}$ commutators of ${\cal U}_k$-matrices. In particular, $A\oplus I_n$ and $I_m\oplus B$ are products of at most $r$ and $s$ commutators of ${\cal U}_k$-matrices, respectively.
\end{proposition}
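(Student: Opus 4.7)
The plan is to exploit the identity $[X\oplus V, Y\oplus W] = [X,Y]\oplus[V,W]$ recorded immediately before the statement, combined with a short padding trick to account for the possibly unequal counts $r$ and $s$. Two auxiliary observations are needed: that the direct sum of two ${\cal U}_k$-matrices (of sizes $m$ and $n$) is again a ${\cal U}_k$-matrix in $M_{m+n}(\field)$, and that $X\oplus I_n$ is a ${\cal U}_k$-matrix whenever $X$ is. Both follow from the block identity $(X\oplus V - I_{m+n})^\ell = (X-I_m)^\ell\oplus (V-I_n)^\ell$ for every $\ell\in\mathbb N$: taking $\ell=k$ yields $0_{m+n}$, while taking $\ell=k-1$ gives something nonzero because its first block $(X-I_m)^{k-1}$ is nonzero.

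Without loss of generality, take $r\geq s$, so that $\max\{r,s\}=r$. I would write
\[A=\prod_{i=1}^r[X_i,Y_i], \qquad B=\prod_{j=1}^s[V_j,W_j],\]
with $X_i,Y_i\in M_m(\field)$ and $V_j,W_j\in M_n(\field)$ being ${\cal U}_k$-matrices. To equalize the number of factors, I extend $B$'s decomposition by declaring $V_i=W_i=I_n$ for $s<i\leq r$, so each extra factor equals $I_n$ and the product is unchanged. Distributing the direct sum across ordinary matrix multiplication and applying the highlighted identity termwise,
\[A\oplus B = \Big(\prod_{i=1}^r[X_i,Y_i]\Big)\oplus\Big(\prod_{i=1}^r[V_i,W_i]\Big) = \prod_{i=1}^r\bigl[X_i\oplus V_i,\, Y_i\oplus W_i\bigr].\]
By the auxiliary observations above, each $X_i\oplus V_i$ and each $Y_i\oplus W_i$ is a ${\cal U}_k$-matrix in $M_{m+n}(\field)$. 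This exhibits $A\oplus B$ as a product of $r=\max\{r,s\}$ commutators of ${\cal U}_k$-matrices. The ``In particular'' assertion is then immediate by specializing $B=I_n$ (viewed as the empty product, $s=0$) to get $A\oplus I_n$, and symmetrically by specializing $A=I_m$ to get $I_m\oplus B$.

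There is no substantive obstacle; the argument is essentially bookkeeping on top of the two cited identities. The only point deserving real care is verifying that $X_i\oplus V_i$ (or $X_i\oplus I_n$ in the padded factors) is of index exactly $k$ and not merely of some smaller index, for which the block form of $(X-I_m)^{k-1}\oplus (V-I_n)^{k-1}$ noted in the first paragraph is precisely what is needed.
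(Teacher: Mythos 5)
Your proof is correct and takes essentially the same route as the paper: pad the shorter factorization with identity factors and combine termwise via $[X\oplus V,\,Y\oplus W]=[X,Y]\oplus[V,W]$, after checking that a direct sum of ${\cal U}_k$-matrices (possibly with an identity block) is again a ${\cal U}_k$-matrix of index exactly $k$. The only cosmetic difference is that you assume $r\geq s$ and pad $B$'s side with $I_n$, whereas the paper assumes $r\leq s$ and pads $A$'s side with $I_m$.
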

\begin{proof}
    Let $s,r\in\mathbb N$. Assume that $r\leq s$ (the argument for $r>s$ is analogous). Suppose that $A=\prod\limits_{i=1}^r[X_i,Y_i]$ and $B=\prod\limits_{j=1}^s[V_j,W_j]$ where $X_i,Y_i$ and $V_j,W_j$ are ${\cal U}_k$-matrices. Observe that
    \[\begin{array}{rcl}A\oplus B&=&\left(\prod\limits_{i=1}^r[X_i,Y_i]\oplus[V_i,W_i]\right)\left(\prod\limits_{i=r+1}^{s}I_m\oplus [V_i,W_i]\right)\\
    &=&\left(\prod\limits_{i=1}^r[X_i\oplus V_i,Y_i\oplus W_i]\right)\left(\prod\limits_{i=r+1}^{s}[I_m\oplus V_i, I_m\oplus ,W_i]\right)\end{array}\]
    where only the first factor above is present if $s=r$. For all $i$, $X_i\oplus V_i$, $Y_i\oplus W_i$, $I_m\oplus V_i$, and $I_m\oplus W_i$ are ${\cal U}_k$-matrices. Thus, $A\oplus B$ is a product of $r+(s-r)=s=\max\{r,s\}$ commutators of ${\cal U}_k$-matrices. The claim about $A\oplus I_n$ and $I_m\oplus B$ follows from the preceding and Proposition \ref{id}.
    \end{proof}

%We have the following results that give the orders of $GL_n(\field)$ and $SL_n(\field)$ for a finite field $\field$.
%\begin{proposition}\label{prop:|GL_n|}{\rm{\cite[Corollary 11.14]{dummit}}} 
 %   Let $|\field|=q$. Then, $$|GL_n(\field)|=(q^n-1)(q^n-q)(q^n-q^2)\cdots(q^n-q^{n-1}).$$
%\end{proposition}

%Notations from group theory are adapted from \cite{hungerford}. 

  % In particular, if $X$ is a unipotent of index $k$, then so is $X^{-1}$. This observation implies the following.]

%%%%%%%%%%%%%%%%%%%%%%%%%%%%%%%%%%%%%%%%%%%%%%%%%%%%%%%%%%%%%%%%%%%%%%
%%%%%%%%%%%%%%%%%%%%%%%%%%%%%%%%%%%%%%%%%%%%%%%%%%%%%%%%%%%%%%%%%%%%%%
%%%%%%%%%%%%%%%%%%%%%%%%%%%SL_2%%%%%%%%%%%%%%%%%%%%%%%%%%%%%%%%%%%%%%%
%%%%%%%%%%%%%%%%%%%%%%%%%%%%%%%%%%%%%%%%%%%%%%%%%%%%%%%%%%%%%%%%%%%%%%
%%%%%%%%%%%%%%%%%%%%%%%%%%%%%%%%%%%%%%%%%%%%%%%%%%%%%%%%%%%%%%%%%%%%%%
\section{Decomposition of matrices in \texorpdfstring{$SL_2(\field)$ into commutators}{}} \label{sec:SL2}
%This section discusses properties and commutators of $2\times 2$ unipotent matrices of index 2.

Suppose $N\in M_2(\mathbb F)$ is nonzero and has zero trace. Write $N=\begin{bmatrix}a& b\\ c& -a\end{bmatrix}$ for some $a,b,c\in\mathbb F$ not all zero. Since $N^2=(a^2+bc)I_2$, it follows that such $N$ is a nilpotent matrix of index $2$ if and only if $a^2+bc=0$. This characterization implies the following.

    \begin{proposition} \label{rmk:uniform}
       Let $A\in SL_2(\field)$. Then $A$ is a ${\cal U}_2$-matrix if and only if
       $ A=\begin{bmatrix}
            1+a & b \\
            c & 1-a
        \end{bmatrix}
       $
       where $a,b,c\in\mathbb F$ are not all zero and $a^2+bc=0$. %In particular, either $abc\neq 0$ or $a=0$ and $bc\neq 0$.
    \end{proposition}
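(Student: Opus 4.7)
The plan is to recognize that this is essentially a translation of the preceding characterization of index-$2$ nilpotent matrices in $M_2(\field)$ (trace zero, not all entries zero, with $a^2+bc=0$) into a statement about $\mathcal{U}_2$-matrices via the bijection $A\leftrightarrow N:=A-I_2$. I will prove each direction by direct computation.

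For the forward direction, I would assume $A\in SL_2(\field)$ is a $\mathcal{U}_2$-matrix. By Proposition \ref{prop:evalue}, $\tr(A)=n=2$, so writing $A=\begin{bmatrix} 1+a & b \\ c & d\end{bmatrix}$ forces $d=1-a$. Setting $N:=A-I_2=\begin{bmatrix} a & b \\ c & -a\end{bmatrix}$, the definition of $\mathcal{U}_2$ means $N\ne 0_2$ and $N^2=0_2$; the first condition says $a,b,c$ are not all zero, and the identity $N^2=(a^2+bc)I_2$ (recorded in the paragraph just before the proposition) gives $a^2+bc=0$.

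For the converse, I would start from the prescribed form of $A$ with $a,b,c$ not all zero and $a^2+bc=0$. Then $N=A-I_2=\begin{bmatrix} a & b \\ c & -a\end{bmatrix}$ is nonzero and satisfies $N^2=(a^2+bc)I_2=0_2$, so $N$ is nilpotent of index $2$, making $A=I_2+N$ a $\mathcal{U}_2$-matrix. The hypothesis $A\in SL_2(\field)$ is automatically consistent since $\det(A)=(1+a)(1-a)-bc=1-(a^2+bc)=1$.

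There is no real obstacle here: the statement is just a repackaging of the opening paragraph of Section \ref{sec:SL2}, and the only substantive ingredient beyond that paragraph is invoking $\tr(A)=2$ from Proposition \ref{prop:evalue} to pin down the $(2,2)$-entry of $A$. The rest is a one-line verification in each direction.
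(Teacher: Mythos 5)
Your proof is correct and follows essentially the same route as the paper: the paper derives the proposition directly from the preceding paragraph's observation that a nonzero trace-zero $N=\begin{bmatrix}a& b\\ c& -a\end{bmatrix}$ satisfies $N^2=(a^2+bc)I_2$, applied to $N=A-I_2$. Your additional step of invoking Proposition \ref{prop:evalue} to justify $\tr(A)=2$ (and hence the $(2,2)$-entry $1-a$) is exactly the implicit link the paper relies on, so there is nothing to add.
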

    
Proposition \ref{rmk:uniform} suggests two types of ${\cal U}_2$-matrices based on the condition $a^2+bc=0$. Corresponding to $a=0$, a \textit{type \textup(i\textup)} is of the form $\begin{bmatrix}
            1 & b \\ 0 & 1
        \end{bmatrix}$ or $\begin{bmatrix}
            1 & 0 \\ c & 1
        \end{bmatrix}$ where $b,c\neq 0$. Corresponding to $a\neq 0$, a \textit{type \textup(ii\textup)} is of the form $\begin{bmatrix}
            1+a & b \\ c & 1-a
        \end{bmatrix}$ where $a,b,c\neq 0$ and $a^2+bc=0$.

 %   Let $A\in M_2(\field)$ such that $(A-I_2)^2=0$ and $A-I_2\ne0$. It follows from Proposition \ref{prop:evalue} that the Jordan canonical form of $A$ is $J_2(1)$.

%    \begin{proposition} \label{prop:unisim}
 %       The Jordan canonical form of a $2\times2$ unipotent matrix of index 2 is $J_2(1)$.
  %  \end{proposition}
    
    \begin{theorem}\label{prop:trace}
       Let $A\in SL_2(\mathbb F)$ be a nonscalar matrix. Then $A$ is a commutator of ${\cal U}_2$-matrices if and only if $\textup{tr}(A)=2+\alpha^2$ for some nonzero $\alpha\in\mathbb F$. %If $A$ is scalar, then $\tr(A)=2$. If $A$ is nonscalar, then $\tr(A)=2+a^2$ for some $a\in\field\setminus\{0\}$.
    \end{theorem}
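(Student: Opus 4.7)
The plan is to handle both directions by reducing to a canonical representative via Propositions~\ref{prop:papinv} and~\ref{prop:rmk2.1} and then running a direct $2\times 2$ matrix computation.

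For necessity, suppose $A=[X,Y]$ where $X,Y$ are ${\cal U}_2$-matrices. By Proposition~\ref{prop:evalue}, $X$ has Jordan form $J_2(1)$, so there exists $P\in GL_2(\mathbb F)$ with $PXP^{-1}=J_2(1)$. Conjugating the commutator by $P$ (Proposition~\ref{prop:rmk2.1}) and using that trace is a similarity invariant, I may assume $X=J_2(1)=\begin{bmatrix}1&1\\0&1\end{bmatrix}$. Using Proposition~\ref{rmk:uniform}, write $Y=\begin{bmatrix}1+a&b\\c&1-a\end{bmatrix}$ with $a^2+bc=0$. The next step is a direct computation of $[X,Y]=(XY)(YX)^{-1}$; repeatedly applying $a^2+bc=0$ to simplify the diagonal entries collapses the trace to $\tr([X,Y])=2+c^2$. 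To rule out $c=0$, note that $c=0$ together with $a^2+bc=0$ forces $a=0$, so $Y$ is upper triangular and commutes with $X$, giving $[X,Y]=I_2$, contradicting that $A$ is nonscalar. Setting $\alpha:=c\ne 0$ yields $\tr(A)=2+\alpha^2$.

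For sufficiency, suppose $\tr(A)=2+\alpha^2$ with $\alpha\ne 0$. Take $X=\begin{bmatrix}1&1\\0&1\end{bmatrix}$ and $Y=\begin{bmatrix}1&0\\\alpha&1\end{bmatrix}$, both type (i) ${\cal U}_2$-matrices. Specializing the previous computation to $a=b=0$ and $c=\alpha$ (or by direct verification) shows that $B:=[X,Y]$ has trace $2+\alpha^2$ and $(1,2)$-entry equal to $-\alpha\ne 0$, hence $B$ is nonscalar. Then $A$ and $B$ are both nonscalar matrices in $SL_2(\mathbb F)$ sharing the characteristic polynomial $x^2-(2+\alpha^2)x+1$; since the minimal and characteristic polynomials of any nonscalar $2\times 2$ matrix coincide, both are similar to the companion matrix of this polynomial, and hence to one another. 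Proposition~\ref{prop:rmk2.1} then gives that $A$ is a commutator of ${\cal U}_2$-matrices.

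The main obstacle is the trace computation in the forward direction; the key is to keep the constraint $a^2+bc=0$ at hand so that the $-a^2-bc$ term appearing on each diagonal of $[X,Y]$ vanishes cleanly, leaving only $2+c^2$. Everything else is mechanical bookkeeping plus an appeal to similarity invariance.
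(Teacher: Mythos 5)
Your proof is correct. The forward direction is essentially the paper's own argument: conjugate $X$ to $J_2(1)$, parametrize $Y$ via Proposition~\ref{rmk:uniform}, and read off $\tr([X,Y])=2+c^2$; your unified computation (carrying the single constraint $a^2+bc=0$ and treating $c=0$ as the degenerate commuting case) merely merges the paper's separate type~(i)/type~(ii) calculations into one. The converse is where you genuinely reorganize the argument. The paper first shows, by a case analysis on whether $m_A(x)=x^2-\tr(A)x+1$ is irreducible, has $-1$ as a root, or splits with $\lambda\ne\pm1$, that $A$ is similar to the companion matrix $\begin{bmatrix}0&-1\\1&2+\alpha^2\end{bmatrix}$, and then exhibits a fairly elaborate pair $(X,Y)$ (with $Y$ involving $\alpha^{-1}(\alpha^2+\alpha+1)$) whose commutator equals that companion matrix exactly. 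You instead take the simplest available commutator $B=\bigl[J_2(1),\begin{bmatrix}1&0\\\alpha&1\end{bmatrix}\bigr]$, check it is nonscalar with trace $2+\alpha^2$, and transport it to $A$ by observing that two nonscalar matrices in $SL_2(\mathbb F)$ with the same trace are nonderogatory with the same characteristic polynomial, hence both similar to its companion matrix and so to each other. This buys a much simpler explicit witness and replaces the three-way case analysis with a one-line appeal to the rational canonical form (valid over any field); the paper's version, in exchange, yields a fully explicit factorization of the canonical representative itself. Both routes rest on the same pillars: Proposition~\ref{prop:rmk2.1} and the parametrization of ${\cal U}_2$-matrices in $SL_2(\mathbb F)$.
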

    \begin{proof}
        Suppose $A=[X,Y]$ where $X,Y$ are ${\cal U}_2$-matrices. By Proposition \ref{prop:evalue}, $PXP^{-1}=J_2(1)$ for some $P\in GL_2(\mathbb F)$, and so \[PAP^{-1}=P[X,Y]P^{-1}=[PXP^{-1},PYP^{-1}]=[J_2(1),PYP^{-1}].\] By Propositions \ref{prop:papinv} and \ref{rmk:uniform}, $PYP^{-1}$ is either a type (i) or type (ii) ${\cal U}_2$-matrix. If $PYP^{-1}$ is type (ii), i.e., $PYP^{-1}=\begin{bmatrix}
                1+a & b \\ c & 1-a
            \end{bmatrix}$ for some $a,b,c\ne0$ such that $a^2+bc=0$, then  \[PAP^{-1}=[J_2(1),PYP^{-1}]=\begin{bmatrix}
                    c^2+c+ac+1 & -(c+ac+a^2+2a)\\
                    c^2 & -ac-c+1
                \end{bmatrix}.\] Hence, $\textup{tr}(A)=2+c^2$ where $c\neq0$. Suppose $PYP^{-1} $ is type (i), i.e., $PYP^{-1}$ is either $\begin{bmatrix}
            1 & b \\ 0 & 1
        \end{bmatrix}$ or $\begin{bmatrix}
            1 & 0 \\ c & 1
        \end{bmatrix}$ where $b,c\neq 0$. In the first case, $PAP^{-1}=[J_2(1),PYP^{-1}]=I_2,$ and so $A=I_2$, contradicting the assumption. In the second case, \[PAP^{-1}=\begin{bmatrix}
                    c^2+c+1 & -c\\
                    c^2 & -c+1
                \end{bmatrix}\] which implies  $\textup{tr}(A)=2+c^2$ where $c\neq0$.

                 Conversely, suppose $\tr(A)=2+\alpha^2$ for some nonzero $\alpha\in\field$. Since $A\in SL_2(\mathbb F)$ is a nonscalar matrix, the minimal polynomial $m_A(x)=p_A(x)=x^2-\mbox{tr}(A)x+1.$ If $m_A(x)$ is irreducible, then $A$ is similar to the companion matrix $\begin{bmatrix}0& -1\\ 1& \mbox{tr}(A)\end{bmatrix}$. Otherwise, $m_A(x)=(x-\lambda)(x-\lambda^{-1})$ for some $\lambda\in \mathbb F\setminus\{0\}$. By assumption $\alpha\neq 0$, and so $\lambda\neq 1$. If $\lambda=-1,$ then $A$ is similar to the companion matrix $\begin{bmatrix}0& -1\\ 1& - 2\end{bmatrix}$ since $A$ is a nonscalar matrix. If $\lambda\neq \pm 1$, then $A$ is similar to $\textup{diag}(\lambda,\lambda^{-1})$; note that $A$ is similar to $P\textup{diag}(\lambda,\lambda^{-1})P^{-1}=\begin{bmatrix}0& -1\\ 1&\lambda+\lambda^{-1}\end{bmatrix}$ where $P=\begin{bmatrix} 1& -1\\ -\lambda& \lambda^{-1}\end{bmatrix}$. In any case, $A$ is similar to $B:=\begin{bmatrix}0& -1\\ 1& \mbox{tr}(A)\end{bmatrix}=\begin{bmatrix}0& -1\\ 1& 2+\alpha^2\end{bmatrix}$. By Proposition \ref{prop:rmk2.1}, it suffices to show that $B$ is a commutator of ${\cal U}_2$-matrices. Direct computations reveal that $B=[X,Y]$ where \[X=\begin{bmatrix}1& \alpha^2\\ 0 & 1\end{bmatrix}\ \textup{and}\ Y=\begin{bmatrix}1-\alpha^{-1}(\alpha^2+\alpha+1)& -\alpha^{-1}(\alpha^2+\alpha+1)^2\\ \alpha^{-1}& 1+\alpha^{-1}(\alpha^2+\alpha +1)\end{bmatrix}\]
        Note that $X$ is a type (i) ${\cal U}_2$-matrix while $Y$ is either a type (i) or (ii) ${\cal U}_2$-matrix ($Y$ is a type (i) ${\cal U}_2$-matrix if and only if $\alpha^2+\alpha+1=0$).

   \end{proof}

\begin{corollary}\label{thm:J2(1)}
Any nonscalar matrix $A\in SL_2(\mathbb F)$ that has $\tr(A)=2$ is not a commutator of ${\cal U}_2$-matrices. In particular, $J_2(1)$ is not a commutator of $\mathcal U_2$-matrices.
\end{corollary}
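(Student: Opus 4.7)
The plan is to derive this corollary directly from Theorem \ref{prop:trace}, which gives an exact trace-based characterization of nonscalar commutators of $\mathcal U_2$-matrices in $SL_2(\field)$. The forward direction of that theorem is what we need: if a nonscalar $A\in SL_2(\field)$ is a commutator of $\mathcal U_2$-matrices, then $\tr(A)=2+\alpha^2$ for some nonzero $\alpha\in\field$. So the strategy is to argue by contrapositive, assuming $A$ is a commutator of $\mathcal U_2$-matrices and deriving $\tr(A)\neq 2$.

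First I would suppose, toward a contradiction, that a nonscalar $A\in SL_2(\field)$ with $\tr(A)=2$ is a commutator of $\mathcal U_2$-matrices. Applying Theorem \ref{prop:trace}, there exists a nonzero $\alpha\in\field$ with $2+\alpha^2=\tr(A)=2$, forcing $\alpha^2=0$ and hence $\alpha=0$ (since $\field$ is a field, and so has no nonzero nilpotent elements). This contradicts $\alpha\neq 0$, proving the first assertion.

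For the specific claim about $J_2(1)=\begin{bmatrix}1&1\\0&1\end{bmatrix}$, I would verify the three hypotheses of the first assertion: $J_2(1)\in SL_2(\field)$ since $\det J_2(1)=1$; it is nonscalar; and $\tr(J_2(1))=2$. The first part then applies verbatim to conclude that $J_2(1)$ is not a commutator of $\mathcal U_2$-matrices.

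There is essentially no obstacle here beyond a careful reading of Theorem \ref{prop:trace}; the one point worth flagging is the role of the nonscalar hypothesis. The theorem is stated only for nonscalar $A$, and this is essential because the scalar matrix $I_2$ has trace $2$ yet is trivially a commutator of $\mathcal U_2$-matrices (as noted just after Proposition \ref{prop:papinv}). Thus the conclusion really does require excluding the scalar case, and this is why the corollary is phrased for nonscalar $A$.
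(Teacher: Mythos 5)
Your proof is correct and follows exactly the route the paper intends: the corollary is stated as an immediate consequence of Theorem \ref{prop:trace}, and your argument (if $\tr(A)=2+\alpha^2=2$ then $\alpha^2=0$, so $\alpha=0$ since a field has no zero divisors, contradicting $\alpha\neq 0$) together with the verification that $J_2(1)$ is a nonscalar element of $SL_2(\field)$ with trace $2$ is precisely what is needed. Your remark about why the nonscalar hypothesis is essential (since $I_2$ has trace $2$ but is a commutator) is also accurate.
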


\begin{corollary} \label{thm:J2(-1)}
 Let $\textup{char}(\mathbb F)\ne2$. Then $J_2(-1)\in SL_2(\field)$ is a commutator of $\mathcal U_2$-matrices if and only if $-1=a^2$ for some $a\in\mathbb F$.
\end{corollary}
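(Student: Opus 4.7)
The plan is to obtain this corollary as a direct specialization of Theorem \ref{prop:trace}, since $J_2(-1)$ is an explicit nonscalar matrix in $SL_2(\field)$ whose trace is easy to compute. First I would verify that $J_2(-1)$ satisfies the hypothesis of Theorem \ref{prop:trace}, namely that it is a nonscalar matrix. Since $\chara(\field)\ne 2$, we have $-1\ne 1$ in $\field$, so $J_2(-1)\ne \pm I_2$, and in any case the $(1,2)$-entry is $1\ne 0$, so $J_2(-1)$ is visibly nonscalar.

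Next I would compute $\tr(J_2(-1))=-1+(-1)=-2$ and apply Theorem \ref{prop:trace}, which says that $J_2(-1)$ is a commutator of $\mathcal U_2$-matrices if and only if there exists a nonzero $\alpha\in\field$ with
\[
-2=\tr(J_2(-1))=2+\alpha^2,
\]
equivalently $\alpha^2=-4$. Note that because $\chara(\field)\ne 2$, the element $4\in\field$ is nonzero, so any $\alpha$ with $\alpha^2=-4$ is automatically nonzero; the existence requirement and the nonzero requirement therefore coincide.

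Finally I would translate the condition $\alpha^2=-4$ into the condition that $-1$ is a square in $\field$. Since $2$ is invertible in $\field$, the substitution $a=\alpha/2$ gives $a^2=\alpha^2/4=-1$, and conversely if $-1=a^2$ with $a\in\field$ then $\alpha:=2a$ satisfies $\alpha^2=-4$ (and $\alpha\ne 0$, since $a\ne 0$ because $a^2=-1\ne 0$, and $2\ne 0$ by the characteristic assumption). This establishes the equivalence.

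I do not expect any real obstacle here; the only thing to be careful about is the role of $\chara(\field)\ne 2$, which is used in two places: to ensure $J_2(-1)$ is nonscalar so that Theorem \ref{prop:trace} applies, and to allow the rescaling $\alpha\mapsto \alpha/2$ that converts $\alpha^2=-4$ into $a^2=-1$.
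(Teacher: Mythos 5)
Your proposal is correct and follows essentially the same route as the paper: apply Theorem \ref{prop:trace} to the nonscalar matrix $J_2(-1)$, obtain the condition $-2=2+\alpha^2$ for some nonzero $\alpha$, and use $\chara(\field)\ne 2$ to rescale via $a=2^{-1}\alpha$. Your extra care about why the nonzero requirements on $\alpha$ and $a$ match up is a nice touch but does not change the argument.
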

    \begin{proof}
         By Theorem \ref{prop:trace}, $J_2(-1)$ is a commutator of $\mathcal U_2$-matrices if and only if  $-2=\textup{tr}(J_2(-1))=2+\alpha^2$ for some nonzero $\alpha\in\mathbb F$. Equivalently, $-1=(2^{-1}\alpha)^2$ since $\chara(\mathbb F)\neq 2$.
    \end{proof}

   % \begin{proposition} \label{thm:J2(1)}
    %     The matrix $J_2(1)\in SL_2(\field)$ is not a commutator of $\mathcal U_2$-matrices.
     %\end{proposition}
    % \begin{proof}
     %    Suppose $J_2(1)=[X,Y]$
     %    where $X=\begin{bmatrix}
      %       1+a & b \\ c & 1-a
      %   \end{bmatrix}$, $Y=\begin{bmatrix}
      %      1+x & y \\ z & 1-x
      %   \end{bmatrix}$ for some $a,b,c$ not all zero, $x,y,z$ not all zero, and $a^2+bc=x^2+yz=0$. Observe that
      %   \begin{center}
      %       $\begin{bmatrix}
      %           1+x+z & y+1-x \\ z & 1-x
      %       \end{bmatrix}=J_2(1)Y=XYX^{-1}$.
      %   \end{center}
      %  Taking the trace yields $2+z=\tr(J_2(1)Y)=\tr(Y)=2$, and so $z=0$. Since $x^2+yz=0$, $x=0$. Similarly, computing the trace of $X^{-1}J_2(1)=YX^{-1}Y^{-1}$ yields $c=0$, and so $a=0$ due to $a^2+bc=0$. Hence, $X=\begin{bmatrix}
      %       1 & b \\ 0 & 1
      %   \end{bmatrix}$ and $Y=\begin{bmatrix}
       %      1 & y \\ 0 & 1
       %  \end{bmatrix}$ which implies that $J_2(1)=[X,Y]=I_2$, a contradiction.
     %\end{proof}

From Proposition \ref{id}, $I_2$ is the \textit{only} scalar matrix in $SL_2(\mathbb F)$ which is a commutator of ${\cal U}_2$-matrices. In particular, when $\chara(\mathbb F)\neq 2$, $-I_2$ is \textit{not} a commutator of ${\cal U}_2$-matrices. It was observed recently 
\cite[Theorem 8.2]{bien} that writing $-I_2$ as a product of two commutators of ${\cal U}_2$-matrices is equivalent to a sum-of-squares problem in $\mathbb F$. A reformulation and an alternative proof are presented here. %The version below does not assume that $|\mathbb F|\geq 4 $ as opposed to the one in\cite[Theorem 8.2]{bien}. 

%A reformulation and an alternative proof are presented here where .

    \begin{corollary} \label{cor:-I_2}
        Let $\textup{char}(\mathbb F)\ne2$. Then $-I_2$ is a product of two commutators of $\mathcal U_2$-matrices if and only if $-1=a^2+b^2$ for some nonzero $a,b\in\field$.
    \end{corollary}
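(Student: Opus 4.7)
The plan is to use Theorem \ref{prop:trace} as the central tool, combined with the observation that for any $M \in SL_2(\mathbb F)$, the Cayley--Hamilton identity $M^2 - \tr(M) M + I_2 = 0$ gives $M^{-1} = \tr(M) I_2 - M$, so $\tr(M^{-1}) = \tr(M)$. This lets me translate the equation $-I_2 = C_1 C_2$ into a trace equation, and then use Theorem \ref{prop:trace} to encode the commutator condition as a sum-of-squares problem.

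For the forward direction, suppose $-I_2 = C_1 C_2$ with each $C_i$ a commutator of $\mathcal{U}_2$-matrices. First I would argue that neither $C_i$ is scalar: by Proposition \ref{id}, the only scalar commutator of $\mathcal{U}_2$-matrices is $I_2$, so if one factor were scalar it would have to be $I_2$, forcing the other factor to be the non-commutator $-I_2$. Hence both $C_1$ and $C_2$ are nonscalar, and Theorem \ref{prop:trace} gives $\tr(C_i) = 2 + \alpha_i^2$ with $\alpha_i \in \mathbb F$ nonzero for $i=1,2$. Rewriting $C_2 = -C_1^{-1}$ and using the trace identity above, $\tr(C_2) = -\tr(C_1)$, so $2 + \alpha_2^2 = -(2 + \alpha_1^2)$, i.e., $\alpha_1^2 + \alpha_2^2 = -4$. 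Dividing by $4$ (here I use $\chara(\mathbb F) \neq 2$) produces $-1 = a^2 + b^2$ with $a = \alpha_1/2$ and $b = \alpha_2/2$ both nonzero.

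For the converse, given $-1 = a^2 + b^2$ with $a, b \neq 0$, I set $\alpha = 2a$ and $\beta = 2b$, both nonzero, and take $C_1$ to be the nonscalar companion matrix $\begin{bmatrix} 0 & -1 \\ 1 & 2 + \alpha^2\end{bmatrix} \in SL_2(\mathbb F)$, which is a commutator of $\mathcal{U}_2$-matrices by Theorem \ref{prop:trace}. Define $C_2 := -C_1^{-1}$; then $C_2$ is nonscalar (since $C_1$ is), has determinant $1$, and its trace is $-\tr(C_1) = -2 - \alpha^2 = 2 + \beta^2$ by the hypothesis $\alpha^2 + \beta^2 = -4$. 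Theorem \ref{prop:trace} again makes $C_2$ a commutator of $\mathcal{U}_2$-matrices, and by construction $C_1 C_2 = -I_2$.

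The only subtle point is ruling out scalar factors in the forward direction; once that is done, the argument is a clean trace calculation. I expect the main obstacle will not be technical difficulty but rather verifying that one cannot sneak in a factor equal to $I_2$ paired with one equal to $-I_2$ --- which is exactly where Proposition \ref{id} is essential, since it is precisely the statement that $-I_2$ fails to be realizable as a single commutator.
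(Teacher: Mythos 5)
Your proof is correct and follows essentially the same route as the paper: both directions reduce to Theorem \ref{prop:trace} via the identity $\tr(M^{-1})=\tr(M)$ for $M\in SL_2(\mathbb F)$, with Proposition \ref{id} ruling out scalar factors, and the paper's explicit pair in the converse is exactly an instance of your construction (its second matrix equals $-A^{-1}$ for its first matrix $A$). The only cosmetic difference is that you take $C_1$ to be a companion matrix and define $C_2:=-C_1^{-1}$ abstractly, whereas the paper writes both factors out explicitly.
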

    \begin{proof} 
        
    %    Since $b^2\ne b^{-2}$, Corollary \ref{lem:hou} and Proposition \ref{prop:rmk2.1} imply that first factor is a commutator of $\mathcal U_2$-matrices. On the other hand, the second factor is a product of two commutators of $\mathcal U_2$-matrices by Theorem \ref{thm:SL2}. Hence, $-I_2$ is a product of three commutators of $\mathcal U_2$-matrices.

 Suppose that $-I_2=AB$ where $A,B$ are commutators of $\mathcal U_2$-matrices. Since $\chara(\mathbb F)\neq 2$, $A,B$ are nonscalar matrices. By Proposition \ref{prop:rmk2.1}, Theorem \ref{prop:trace}, and the assumption, there exist $\alpha,\beta\in\field\setminus\{0\}$ such that \[-2-\alpha^2=\tr(-A^{-1})=\tr(B)=2+\beta^2.\] If $a:=2^{-1}\alpha$ and $ b:=2^{-1}\beta$, then $a,b\in\mathbb{F}\setminus\{0\}$ and \[a^2+b^2=2^{-2}\alpha^2+2^{-2}\beta^2=2^{-2}(\alpha^2+\beta^2)=-1.\]

            Conversely, suppose there exist $a,b\in\field\setminus\{0\}$ such that $a^2+b^2=-1$. If $\alpha:=2a$ and $\beta:=2b$, then $\alpha,\beta\in\mathbb F\setminus\{0\}$ and $-I_2=AB$ where $A=\begin{bmatrix}
                2 & 1\\
                2\alpha^2-1 & \alpha^2
            \end{bmatrix}$ and $B=\begin{bmatrix}
                -\alpha^2 & 1\\
                2\alpha^2-1 & -2
            \end{bmatrix}$. Since $\tr(A)=2+\alpha^2$ and $\tr(B)=-2-\alpha^2=2+\beta^2$ where $\alpha,\beta\in\field\setminus\{0\}$, $A$ and $B$ are commutators of $\mathcal U_2$-matrices by Theorem \ref{prop:trace}.
    \end{proof}

   Consider $-I_2\in SL_2(\mathbb F)$ where $\field=\reals$ or $|\field|=5$. Proposition \ref{id} and Corollary \ref{cor:-I_2} imply that $-I_2$ is not a commutator nor a product of two commutators of $\mathcal U_2$-matrices. This does not automatically mean that $-I_2$ is \textit{not} a product of commutators of ${\cal U}_2$-matrices. This issue will be addressed in Proposition \ref{thm:-I_2}. 

   For the meantime, we end this part with a characterization of nonscalar diagonal matrices in $SL_2(\mathbb F)$ which are commutators of ${\cal U}_2$-matrices. 
   
 %  Some techniques of the proof are borrowed from \cite{hou}.
  \begin{corollary} \label{lem:hou}
         Let $a\in\field\setminus\{-1,0,1\}$. Then $\diag(a,a^{-1})$ is a commutator of $\mathcal U_2$-matrices if and only if $a=b^2$ for some $b\in\mathbb F$.
    \end{corollary}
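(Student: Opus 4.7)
The plan is to apply Theorem \ref{prop:trace} directly to the matrix $A=\diag(a,a^{-1})$. First I would observe that since $a\ne\pm1$, we have $a\ne a^{-1}$, so $A$ is nonscalar and Theorem \ref{prop:trace} applies. Because $\tr(A)=a+a^{-1}$, the theorem reduces the statement to the following equivalence: $\diag(a,a^{-1})$ is a commutator of $\mathcal U_2$-matrices if and only if there exists a nonzero $\alpha\in\field$ with
\[
a+a^{-1}=2+\alpha^{2}.
\]

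The next step is to rewrite this equation in a form that exposes the square-root obstruction. Multiplying through by $a$ (which is nonzero) and rearranging gives the key identity
\[
a+a^{-1}-2 \;=\; a^{-1}(a-1)^{2},
\]
so the existence of a nonzero $\alpha$ as above is equivalent to $a^{-1}(a-1)^{2}$ being a nonzero square in $\field$. Because $a\ne 1$, the factor $(a-1)^{2}$ is already a nonzero square, so the condition collapses to requiring that $a^{-1}$ be a (nonzero) square, which in turn is equivalent to $a$ itself being a square.

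I would then spell out the two directions cleanly. For the forward direction, assuming $a^{-1}(a-1)^2=\alpha^2$ with $\alpha\ne 0$, the element $(a-1)/\alpha$ is well defined and its square equals $a^{-1}$, so $a=\bigl((a-1)/\alpha\bigr)^{-2}$ is a square. For the converse, writing $a=b^{2}$ (necessarily with $b\ne 0$ since $a\ne0$, and with $b^{2}\ne 1$ since $a\ne 1$), set $\alpha:=b^{-1}(b^{2}-1)$; this $\alpha$ is nonzero because $b\ne\pm 1$, and a direct check gives $\alpha^{2}=b^{-2}(b^{2}-1)^{2}=a^{-1}(a-1)^{2}=a+a^{-1}-2$, so Theorem \ref{prop:trace} yields the desired commutator representation.

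There is no real obstacle in this argument; the whole proof is driven by the algebraic identity $a+a^{-1}-2=a^{-1}(a-1)^{2}$, which cleanly translates the trace condition of Theorem \ref{prop:trace} into a statement about squares. The only points to watch are the verifications that the various nonvanishing hypotheses ($\alpha\ne 0$, $b\ne 0$, $b\ne\pm 1$) are consistent with $a\notin\{-1,0,1\}$, but these are immediate.
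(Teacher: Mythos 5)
Your argument is correct and essentially identical to the paper's proof: both apply Theorem \ref{prop:trace} to the nonscalar matrix $\diag(a,a^{-1})$ and exploit the identity $a+a^{-1}-2=a^{-1}(a-1)^2$, with the same witnesses $b=(a-1)\alpha^{-1}$ in one direction and $\alpha=b-b^{-1}$ in the other. One trivial slip: the square of $(a-1)/\alpha$ is $a$, not $a^{-1}$ (so $a=\bigl((a-1)/\alpha\bigr)^{2}$ directly), but since nonzero squares are closed under inversion your conclusion is unaffected.
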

   
   %  \begin{corollary} \label{lem:hou}
   %      Let $a\in\field\setminus\{-1,0,1\}$.
   %      \begin{enumerate}
   %          \item[\textup(i\textup)] If $\textup{char}(\mathbb F)\ne2$, then $\diag(a,a^{-1})$ is a commutator of $\mathcal U_2$-matrices if and only if $a\ne-1$ and $a=b^2$ for some $b\in\mathbb F$;
   %          \item[\textup(ii\textup)] If $\textup{char}(\mathbb F)=2$, then $\diag(a,a^{-1})$ is a commutator of $\mathcal U_2$-matrices. 
   %      \end{enumerate}
   % \end{corollary}
    \begin{proof} 
    %Observe that $A$ is a scalar matrix if and only if $a=\pm 1$. By Proposition \ref{id} and remarks before it, $A$ is a scalar matrix and a commutator of ${\cal U}_2$-matrices if and only if $a=1$. Assume $a\neq 1$. 

    %Assume $\chara(\field)\ne2$. 
        Observe that since $a\in \mathbb F\setminus\{-1,0,1\}$, $\diag(a,a^{-1})$ is a nonscalar matrix in $SL_2(\mathbb F)$. 

If $\diag(a,a^{-1})$ is a commutator of $\mathcal U_2$-matrices, then Theorem \ref{prop:trace} ensures that $a+a^{-1}=\tr(\diag(a,a^{-1}))=2+\alpha^2$ for some nonzero $\alpha\in\mathbb F$. Since $\alpha\neq 0$ and $a\neq a^{-1}$, $b:=(a-1)\alpha^{-1}\in \mathbb F\setminus\{0\}$ and \[b^2=(a-1)^2\alpha^{-2}=(a^2-2a+1)\alpha^{-2}=a(a+a^{-1}-2)\alpha^{-2}=a.\]
        
        Conversely, suppose $a=b^2$ for some $b\in\field$. By assumption on $a$, it is necessary that $b\in\mathbb F\setminus\{-1,0,1\}$. Hence, $\alpha:=b-b^{-1}\in\mathbb F\setminus\{ 0\}$ and \[\mbox{tr}(\diag(a,a^{-1}))=a+a^{-1}=2+(b-b^{-1})^2=2+\alpha^2.\]
       By Theorem \ref{prop:trace}, $\diag(a,a^{-1})$ is a commutator of ${\cal U}_2$-matrices.

     %   \item[(i)] Assume $\chara(\field)\ne2$. 
     %   Suppose $a\ne-1$ and $a=b^2$ for some $b\in\field$. The case $a=1$ follows from Proposition \ref{id}. Assume $a\ne1$. Observe that $\alpha:=b-b^{-1}\neq 0$ and \begin{equation}\label{ktrace}\mbox{tr}(A)=a+a^{-1}=2+(b-b^{-1})^2=2+\alpha^2.\end{equation}
     %  Since $A$ is nonscalar and \eqref{ktrace}, $A$ is a commutator of ${\cal U}_2$-matrices due to Theorem \ref{prop:trace}.

%     Theorem \ref{prop:trace} ensures that $a+a^{-1}=\tr(A)=2+\alpha^2$ for some nonzero $\alpha\in\mathbb F$. Note that $b:=(a-1)\alpha^{-1}\neq 0$ and \[b^2=(a-1)^2\alpha^{-2}=(a^2-2a+1)\alpha^{-2}=a(a+a^{-1}-2)\alpha^{-2}=a.\]

  \end{proof}

    As we will see, there is stark contrast in the results for fields with $|\mathbb F|\leq 3$ and $|\mathbb F|\geq 4$.
  
    \subsection{Fields with at most three elements}
    %Notations in group theory are adapted from \cite{hungerford}. Let $G$ be a group and $\varnothing\ne H,N\subseteq G$. We denote by $H\leq G$ ($H<G$) if $H$ is a subgroup (proper subgroup) of $G$. We also denote by $G'$ the commutator subgroup of $G$, that is, the subgroup generated by all commutators of $G$. If $N$ is a normal subgroup of $G$, then we denote it by $N\trianglelefteq G$ with the factor group given by $G/N$. Moreover, the index of $H$ in $G$ is given by $[G:H]$ and the normalizer of $H$ in $G$ is given by $N_G(H)=\{g\in G\ |\ gHg^{-1}=H\}$.

   This subsection involves group theoretic considerations, and so recalling notations and facts is necessary. Let $G$ be a group. A subgroup (proper subgroup) $H$ of $G$ is denoted by $H\leq G$ ($H<G$). If $S$ is a subset of $G$, then it generates a subgroup which is denoted by $\langle S\rangle.$ A normal subgroup $H$ of $G$ is denoted by $H\trianglelefteq G$ while the factor group it defines is denoted by $G/H$. The index of a subgroup $H$ in $G$ is denoted by $[G:H]$ and the normalizer of $H$ in $G$ is denoted by $N_G(H)$. Analogous to the matrix case, a \textit{commutator} in $G$ is an element of the form $[x,y]:=xyx^{-1}y^{-1}$ for some $x,y\in G$. The subgroup generated by all commutators in $G$ is called the \textit{derived or commutator subgroup of} $G$ and is denoted by $G'$.
   
   Observe that \begin{equation}\label{prop:|SL_n|}
|SL_n(\mathbb F)|=\frac{1}{q-1}\prod\limits_{i=0}^{n-1}(q^n-q^i)\ \textup{whenever}\ |\mathbb F|=q.
    %$|SL_n(\mathbb F)|=\dfrac{\prod_{i=0}^{n-1}(q^n-q^i)}{q-1}.$
    %\[|SL_n(\field)|=\frac{(q^n-1)(q^n-q)(q^n-q^2)\cdots(q^n-q^{n-1})}{q-1}.\]
\end{equation}
Indeed, if  $\phi:GL_n(\field)\to\field\setminus\{0\}$ is defined by $\phi(A)=\det(A)$, then the First Isomorphism Theorem for groups implies \[GL_n(\field)/SL_n(\field)=GL_n(\field)/\ker(\phi)\cong\img(\phi)=\field\setminus\{0\}.\] Since $|GL_n(\field)|=\prod\limits_{i=0}^{n-1}(q^n-q^i)$
by \cite[Corollary 11.2.14]{dummit}, \eqref{prop:|SL_n|} holds. 

    Dickson showed in \cite{dickson} that $SL_n(\field)'=SL_n(\field)$ except for $SL_2(\field)$ with $|\field|\leq3$. In this subsection, we confirm Dickson's result for $SL_2(\mathbb F)$ with $|\field|\leq3$ by providing an alternative proof using Sylow theorems and results on commutator subgroups.
    
   % of $SL_2(\field)'<SL_2(\field)$
  %  . To prove the claim, Sylow theorems and results on commutator subgroups are used. We believe that this approach is more direct. %Finally, we show that every matrix in $SL_2(\field)'$ is a product of at most two commutators of unipotent matrices of index 2 and two is the smallest such number. 
    %We start by finding normal subgroups of $SL_2(\field)$ where $|\field|\leq3$.

    \begin{lemma}\label{lem:normalF2}
        Let $|\field|=2$. If $A:=\begin{bmatrix}
        0 & 1\\
        1 & 1
    \end{bmatrix}\in SL_2(\mathbb F), $ then $SL_2(\mathbb F)'\subseteq \langle A\rangle.$
    %$\langle A\rangle \trianglelefteq SL_2(\field)$. 
    \end{lemma}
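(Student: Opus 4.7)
The plan is to invoke Sylow's theorems on the concrete small group $SL_2(\mathbb F)$ and then apply the standard characterization of the commutator subgroup as the smallest normal subgroup with abelian quotient.

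First, I would apply equation \eqref{prop:|SL_n|} with $n=2$ and $q=2$ to compute $|SL_2(\mathbb F)| = (2^2-1)(2^2-2) = 6 = 2\cdot 3$. Next, a direct matrix computation in $M_2(\mathbb F)$ shows $A^2 = \begin{bmatrix}1&1\\1&0\end{bmatrix}$ and then $A^3 = I_2$, so $A$ has order $3$ in $SL_2(\mathbb F)$. Consequently, $\langle A\rangle$ is a subgroup of order $3$ and hence a Sylow $3$-subgroup of $SL_2(\mathbb F)$.

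By Sylow's theorems, the number of Sylow $3$-subgroups of $SL_2(\mathbb F)$ must divide $2$ and be congruent to $1 \pmod 3$, which forces it to equal $1$. Therefore $\langle A\rangle$ is the \emph{unique} Sylow $3$-subgroup, which makes it normal in $SL_2(\mathbb F)$. The quotient group $SL_2(\mathbb F)/\langle A\rangle$ then has order $2$ and is cyclic, in particular abelian.

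Finally, I would invoke the standard fact that if $N\trianglelefteq G$ and $G/N$ is abelian, then $G' \subseteq N$. Applying this to $N = \langle A\rangle$ and $G = SL_2(\mathbb F)$ yields $SL_2(\mathbb F)' \subseteq \langle A\rangle$, as desired. The argument is essentially mechanical; there is no real obstacle, since the only nontrivial verification is the order of $A$, which is a two-step $2\times 2$ calculation over $\mathbb F_2$.
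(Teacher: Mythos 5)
Your proof is correct and follows essentially the same route as the paper: compute that $A$ has order $3$ in the group $SL_2(\mathbb F)$ of order $6$, establish that $\langle A\rangle$ is normal, and conclude from the abelian (order-$2$) quotient that $SL_2(\mathbb F)'\subseteq\langle A\rangle$. The only cosmetic difference is in how normality is justified: the paper observes that $\langle A\rangle$ has index $2$ and is therefore normal, while you count Sylow $3$-subgroups ($n_3\mid 2$ and $n_3\equiv 1\pmod 3$ forces $n_3=1$); both are one-line verifications of the same fact.
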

\begin{proof}
 Since $\textup{char}\mathbb (\mathbb F)=2$, $A^2\neq I_2$ but $A^3=I_2$. Hence, $[SL_2(\field):\langle A\rangle ]=\frac{|SL_2(\field)|}{|\langle A\rangle|}=2$ due to \eqref{prop:|SL_n|}. By \cite[Exercise I.5.1]{hungerford}, $\langle A\rangle \trianglelefteq SL_2(\field)$. In particular, $SL_2(\field)/\langle A\rangle$ is abelian, and so $SL_2(\field)'\subseteq\langle A\rangle$ by \cite[Theorem II.7.8]{hungerford}. \end{proof}
% The claim follows from \cite[Exercise I.5.1]{hungerford}.

  %  We also find a normal subgroup of $SL_2(\field)$ where $|\field|=3$.
Let $|\mathbb F|=3$ and consider the following elements of $SL_2(\mathbb F)$:
\begin{equation}\label{quatmat}
    \mathbf i:=\begin{bmatrix}
        1 & 1\\
        1 & 2
    \end{bmatrix}, \ \mathbf j:=\begin{bmatrix}
        0 & 1\\
        2 & 0
    \end{bmatrix},\ \textup{and}\ \mathbf k:=\begin{bmatrix}
        2 & 1\\
        1 & 1
    \end{bmatrix}.
\end{equation}
The matrices in \eqref{quatmat} satisfy
\begin{center}$\mathbf{i}^2=\mathbf j^2=\mathbf k^2=-I_2$, $\mathbf{ij}=-\mathbf{ji}=\mathbf k$, $\mathbf{jk}=-\mathbf{kj}=\mathbf i$, and $\mathbf{ki}=-\mathbf{ik}=\mathbf j$.\end{center} 
Thus, $\{I_2,\mathbf i,\mathbf j,\mathbf k,-I_2,-\mathbf i,-\mathbf j,-\mathbf k\}=\langle \{\mathbf{i}, \mathbf{j},\mathbf{k}\}\rangle=:{\cal Q}$ is the quaternion group.

%  Let ${\cal Q}=\{I_2,\mathbf i,\mathbf j,\mathbf k,-I_2,-\mathbf i,-\mathbf j,-\mathbf k\}$ where $\mathbf i=\begin{bmatrix}
 %       1 & 1\\
  %      1 & 2
  %  \end{bmatrix}$, $\mathbf j=\begin{bmatrix}
  %      0 & 1\\
  %      2 & 0
  %  \end{bmatrix}$, and $\mathbf k=\begin{bmatrix}
  %      2 & 1\\
  %      1 & 1
  %  \end{bmatrix}$
%The elements of ${\cal Q}$ in the next result satisfy
%\begin{center}$\mathbf{i}^2=\mathbf j^2=\mathbf k^2=-I_2$, $\mathbf{ij}=-\mathbf{ji}=\mathbf k$, $\mathbf{jk}=-\mathbf{kj}=\mathbf i$, and $\mathbf{ki}=-\mathbf{ik}=\mathbf j$.\end{center} 
\begin{lemma}\label{lem:normalF3}
    Let $|\field|=3$. If ${\cal Q}:=\langle \{\mathbf{i}, \mathbf{j},\mathbf{k}\}\rangle$ where $\mathbf{i},\mathbf{j},\mathbf{k}\in SL_2(\mathbb F)$ is as defined in \eqref{quatmat}, then $SL_2(\field)'\subseteq \mathcal Q$.
    %Then ${\cal Q}\trianglelefteq SL_2(\field)$.
\end{lemma}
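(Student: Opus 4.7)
The plan is to parallel the strategy of Lemma \ref{lem:normalF2}: establish that ${\cal Q}$ is a normal subgroup of $SL_2(\mathbb F)$, then conclude via the resulting abelian quotient. First I would apply \eqref{prop:|SL_n|} with $q=3$ and $n=2$ to obtain $|SL_2(\mathbb F)|=24$, and observe that ${\cal Q}$ is the quaternion group of order $8$. Hence ${\cal Q}$ is a Sylow $2$-subgroup of $SL_2(\mathbb F)$ with $[SL_2(\mathbb F):{\cal Q}]=3$.

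The crux of the argument is showing ${\cal Q}\trianglelefteq SL_2(\mathbb F)$. My plan is to prove that ${\cal Q}$ coincides with the set $S$ of all elements of $SL_2(\mathbb F)$ whose order divides $4$. Since conjugation preserves element order, $S$ is automatically a union of conjugacy classes of $SL_2(\mathbb F)$, so the equality ${\cal Q}=S$ immediately forces ${\cal Q}$ to be normal. The inclusion ${\cal Q}\subseteq S$ is clear from the quaternion relations $\mathbf{i}^2=\mathbf{j}^2=\mathbf{k}^2=-I_2$. For the reverse inclusion I would split on the order of $A\in S$. Any $A$ with $A^2=I_2$ has minimal polynomial dividing $(x-1)(x+1)$; together with $\det A=1$ this forces $A=\pm I_2\in{\cal Q}$. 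For an element $A$ of order $4$, its square has order $2$ and so must equal $-I_2$; writing $A=\begin{bmatrix}a&b\\c&-a\end{bmatrix}$ (the trace-zero shape dictated by $A^2=-I_2$), the determinant condition $\det A=1$ reduces to $a^2+bc=-1$. A direct enumeration over the three values of $a\in\mathbb F$ produces exactly six solutions, which one verifies to be $\pm\mathbf{i},\pm\mathbf{j},\pm\mathbf{k}$, all inside ${\cal Q}$.

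Once ${\cal Q}\trianglelefteq SL_2(\mathbb F)$ is in place, the quotient $SL_2(\mathbb F)/{\cal Q}$ has order $3$ and is therefore cyclic and abelian; \cite[Theorem II.7.8]{hungerford} then gives $SL_2(\mathbb F)'\subseteq {\cal Q}$. The main obstacle is the elementary but careful enumeration of order-$4$ elements needed to pin down ${\cal Q}=S$; every other step is formal and mirrors the argument used for $|\mathbb F|=2$.
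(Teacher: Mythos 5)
Your proposal is correct, and it reaches the same intermediate goal as the paper --- that ${\cal Q}\trianglelefteq SL_2(\mathbb F)$, after which the order-$3$ (hence abelian) quotient gives $SL_2(\field)'\subseteq{\cal Q}$ via \cite[Theorem II.7.8]{hungerford} --- but by a genuinely different mechanism. The paper proves normality by showing ${\cal Q}$ is the \emph{unique} Sylow $2$-subgroup: it assumes there are three, uses the Second and Third Sylow Theorems and the conjugation action on the set of Sylow $2$-subgroups to compute $|P_1\cap P_2\cap P_3|=4$, counts $15$ elements of $2$-power order and $8$ elements of $3$-power order, and derives a contradiction from the order-$6$ element $J_2(-1)$. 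You instead identify ${\cal Q}$ with the set $S$ of elements of order dividing $4$: order $\le 2$ forces $\pm I_2$ (the split semisimple case is excluded by $\det=1$), and order $4$ forces $A^2=-I_2$, hence $\tr(A)=0$ by Cayley--Hamilton and $a^2+bc=-1$, which has exactly six solutions over $\field_3$, namely $\pm\mathbf i,\pm\mathbf j,\pm\mathbf k$; since $S$ is a union of conjugacy classes and equals the subgroup ${\cal Q}$, normality is immediate. Your route is more elementary and self-contained (it needs essentially no Sylow theory beyond the index computation, and the enumeration is a short finite check), while the paper's argument is a more structural counting argument that avoids writing down matrices explicitly. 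Both are complete; just make sure to state explicitly that an order-$2$ element must be $-I_2$ because $\diag(1,-1)$ has determinant $-1$, which you do implicitly via the minimal polynomial discussion.
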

\begin{proof} By \eqref{prop:|SL_n|}, $|SL_2(\field)|=24=2^3\cdot3$.
Since $|{\cal Q}|=2^3$, \cite[Corollary II.5.8]{hungerford}(i) guarantees that ${\cal Q}$ is a Sylow 2-subgroup of $SL_2(\field)$. If it can be established that ${\cal Q}\trianglelefteq SL_2(\field)$, then the claim follows since $SL_2(\field)/\mathcal Q$ is abelian, and thus $SL_2(\field)'\subseteq \mathcal Q$ by \cite[Theorem II.7.8]{hungerford}.

To show that ${\cal Q}\trianglelefteq SL_2(\field)$, it suffices to prove that $SL_2(\mathbb F)$ has a unique Sylow 2-subgroup due to \cite[Corollary II.5.8]{hungerford}(iii). Suppose not. We arrive at a contradiction by showing that all non-identity elements of $SL_2(\mathbb F)$ can only have orders which are powers of $2$ or $3$; we then exhibit an element whose order is $6.$

By the Third Sylow Theorem, $SL_2(\mathbb F)$ has three Sylow 2-subgroups, say $P_1,P_2,P_3$. We claim that $|P_1\cap P_2\cap P_3|=4$. Let $\mathcal P=\{P_1,P_2,P_3\}$. For $P\in\mathcal P$, define $\mathcal S_P$ be the set of all subgroups of $SL_2(\field)$ conjugate to $P$. Since each element of ${\cal S}_P$ is a Sylow 2-subgroup due to \cite[Corollary II.5.8]{hungerford}(ii), $\mathcal S_P\subseteq\mathcal P$. Equality holds since the Second Sylow Theorem ensures that $\mathcal P\subseteq\mathcal S_P$. Hence, \[3=|{\cal P}|=|{\cal S}_P|=[SL_2(\field):N_{SL_2(\field)}(P)]=\frac{24}{|N_{SL_2(\field)}(P)|}\] due to \cite[Corollary II.4.4]{hungerford}, and so $|N_{SL_2(\field)}(P)|=8$. Since $P\trianglelefteq N_{SL_2(\field)}(P)$ and $|P|=8$ by \cite[Corollary II.5.8]{hungerford}(i), it follows that $P=N_{SL_2(\field)}(P)$. Consequently, $P_1\cap P_2\cap P_3=\bigcap_{P\in {\cal P}}N_{SL_2(\field)}(P).$ Now,
let $\Sigma(\mathcal P)=\{\sigma\,|\, \sigma$ is a bijection on $\mathcal P\}$. For a given $A\in SL_2(\mathbb F)$, define $\phi(A)(P)=APA^{-1}$ for all $P\in {\cal P}$. The mapping $\phi: SL_2(\field)\to\Sigma(\mathcal P)$ is a group homomorphism (see \cite[Theorem II.4.5]{hungerford}). Note that $A\in\ker(\phi)$ if and only if $A\in SL_2(\field)$ such that $APA^{-1}=\phi(A)(P)=P$ for all $P\in\mathcal P$. Equivalently, $A\in N_{SL_2(\field)}(P)$ for all $P\in\mathcal P$. It follows that $\ker(\phi)=\bigcap_{P\in {\cal P}}N_{SL_2(\field)}(P)=P_1\cap P_2\cap P_3$. By the First Isomorphism Theorem for groups,
\[|P_1\cap P_2\cap P_3|=|\ker(\phi)|=\frac{|SL_2(\field)|}{|\textup{Im}(\phi)|}\geq \frac{|SL_2(\field)|}{|\Sigma({\cal P})|}=\frac{24}{6}=4.\]
    
   % Let $P\in\mathcal P$ and let $\mathcal S$ be the set of all subgroups of $SL_2(\field)$ conjugate to $P$. Since each element of ${\cal S}$ is a Sylow 2-subgroup due to \cite[Corollary II.5.8]{hungerford}(ii), $\mathcal S\subseteq\mathcal P$. Equality holds since the Second Sylow Theorem ensures that $\mathcal P\subseteq\mathcal S$. Hence, $|{\cal P}|=|{\cal S}|=[SL_2(\field):N_{SL_2(\field)}(P)]$ due to \cite[Corollary II.4.4]{hungerford}. Consequently, $|N_{SL_2(\field)}(P)|=8$. By \cite[Corollary II.5.8]{hungerford}(i), we have that $|P|=2^3=8$. Since $P\trianglelefteq N_{SL_2(\field)}(P)$, it further implies that $P=N_{SL_2(\field)}(P)$ for all $P\in\mathcal P$.
     %   Let $\mathcal S$ be the set of all subgroups of $SL_2(\field)$ conjugate to $P$. Then $|\mathcal S|=[SL_2(\field):N_{SL_2(\field)}(P)]$. The Second Sylow Theorem guarantees that $\mathcal P\subseteq\mathcal S$ while \cite[Corollary II.5.8]{hungerford}(ii) implies $\mathcal S\subseteq\mathcal P$. Therefore, $[SL_2(\field):N_{SL_2(\field)}(P)]=|\mathcal P|$ which further implies that $|N_{SL_2(\field)}(P)|=8$. By \cite[Corollary II.5.8]{hungerford}(i), we have that $|P|=2^3=8$. Since $P\trianglelefteq N_{SL_2(\field)}(P)$, it further implies that $P=N_{SL_2(\field)}(P)$ for all $P\in\mathcal P$.
  %  We now have that
   % \[
   % |P_1\cap P_2\cap P_3|=\left|\displaystyle\bigcap_{P\in {\cal P}}N_{SL_2(\field)}(P)\right|=|\ker(\phi)|\geq4.
    %\]
   \noindent To show equality, observe that $4\leq|P_1\cap P_2\cap P_3|\leq|P_i\cap P_j|\leq |P_i|= 8$. For distinct $P_i, P_j$, $|P_i\cap P_j|=4$ by Lagrange's Theorem. Therefore, $|P_1\cap P_2\cap P_3|=4$ which proves the claim. Thus, there are $4|{\cal P}|+|P_1\cap P_2\cap P_3|-1=15$ non-identity elements in $SL_2(\field)$ that have order a power of 2. 

 Since $|\langle J_2(1)\rangle|=|\langle J_2(1)^\top\rangle|=3$, both $\langle J_2(1)\rangle$ and $\langle J_2(1)^\top\rangle$ are Sylow $3$-subgroups of $SL_2(\field)$ due to \cite[Corollary II.5.8]{hungerford}(i). By the Third Sylow Theorem, $SL_2(\mathbb F)$ has exactly four Sylow 3-subgroups, say $R_1,R_2,R_3,R_4$. Since $|R_i|=3$ for all $i$, $R_i\cap R_j=\{I_2\}$ for all $i\ne j$. As a consequence, there are exactly 8 non-identity elements in $SL_2(\field)$ that have order a power of 3.

Since $|SL_2(\field)|=24=15+8+1$, all non-identity elements of $SL_2(\field)$ can only have orders which are powers of 2 or 3. However, observe that $J_2(-1)\in SL_2(\field)$ has order 6. Therefore, there must be a unique Sylow-2 subgroup of $SL_2(\field)$ as desired.\end{proof}

%These results lead us to the main theorem of this subsection. 
Lemmas \ref{lem:normalF2}-\ref{lem:normalF3} are used to prove the exceptional cases in Dickson's result.

\begin{theorem} \label{thm:Z2Z3}
Let $|\mathbb F|\leq 3$. Every matrix in $SL_2(\field)'$ is a product of at most $|\mathbb F|-1$ commutator of ${\cal U}_2$-matrices. The upper bound $|\mathbb F|-1$ cannot be further reduced. Moreover, $SL_2(\field)'<SL_2(\field)$. 
\end{theorem}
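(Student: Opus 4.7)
The plan is to handle the two cases $|\mathbb F|=2$ and $|\mathbb F|=3$ separately, in each case using the containment supplied by Lemma \ref{lem:normalF2} or Lemma \ref{lem:normalF3} and then using Theorem \ref{prop:trace} to show the reverse containment. Throughout, the order calculation \eqref{prop:|SL_n|} gives $|SL_2(\mathbb F)|=6$ when $|\mathbb F|=2$ and $|SL_2(\mathbb F)|=24$ when $|\mathbb F|=3$, which already shows $SL_2(\mathbb F)'<SL_2(\mathbb F)$ once we identify the derived subgroup, since Lemma \ref{lem:normalF2} forces $|SL_2(\mathbb F)'|\leq 3$ and Lemma \ref{lem:normalF3} forces $|SL_2(\mathbb F)'|\leq 8$.

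For $|\mathbb F|=2$, I would first observe that $SL_2(\mathbb F)$ is nonabelian (a single explicit commutator suffices), so the subgroup $SL_2(\mathbb F)'$ is nontrivial; combined with Lemma \ref{lem:normalF2} this forces $SL_2(\mathbb F)'=\langle A\rangle$, which has order $3$. Since $A\neq I_2$ has $\tr(A)=1=2+1^2$ and $A^2$ has $\tr(A^2)=1=2+1^2$ (working in $\mathbb F$), both $A$ and $A^2$ are nonscalar and qualify as single commutators of $\mathcal U_2$-matrices by Theorem \ref{prop:trace}; together with $I_2$ (Proposition \ref{id}) this exhausts $\langle A\rangle$ with bound $1=|\mathbb F|-1$. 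The bound is obviously sharp because $A\neq I_2$.

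For $|\mathbb F|=3$, I would compute the traces of the eight elements of $\mathcal Q$: the two scalars $\pm I_2$ have traces $2$ and $-2=1$, while each of $\pm\mathbf i,\pm\mathbf j,\pm\mathbf k$ has trace $0=2+1^2$. Hence by Theorem \ref{prop:trace} every nonscalar element of $\mathcal Q$ is a single commutator of $\mathcal U_2$-matrices, so $\mathcal Q\subseteq SL_2(\mathbb F)'$; combined with Lemma \ref{lem:normalF3} this yields $SL_2(\mathbb F)'=\mathcal Q$. For the remaining scalar $-I_2$, Corollary \ref{cor:-I_2} applies with $a=b=1$ because $1^2+1^2=2=-1$ in $\mathbb F$, so $-I_2$ is a product of two commutators of $\mathcal U_2$-matrices. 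This gives the upper bound $2=|\mathbb F|-1$ for every element of $SL_2(\mathbb F)'$, and sharpness follows from Proposition \ref{id}: since $-I_2\neq I_2$ is a scalar, it cannot itself be a single commutator of $\mathcal U_2$-matrices.

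The main conceptual point, and the one I would be most careful about, is the upgrade from the inclusion $SL_2(\mathbb F)'\subseteq\langle A\rangle$ (respectively $SL_2(\mathbb F)'\subseteq\mathcal Q$) to equality. The cheapest route is exactly the one above, namely to use Theorem \ref{prop:trace} to recognize individual elements as \emph{commutators} of $\mathcal U_2$-matrices, since any commutator of $\mathcal U_2$-matrices lies in $SL_2(\mathbb F)'$; the only slightly delicate part is the scalar $-I_2$ in the $|\mathbb F|=3$ case, which is not detected by Theorem \ref{prop:trace} and requires Corollary \ref{cor:-I_2} instead. Once equality of $SL_2(\mathbb F)'$ with $\langle A\rangle$ or $\mathcal Q$ is in hand, the bound, its sharpness, and the strict containment $SL_2(\mathbb F)'<SL_2(\mathbb F)$ all fall out of the order comparisons with $|SL_2(\mathbb F)|$.
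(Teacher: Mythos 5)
Your proposal is correct and follows essentially the same route as the paper: invoke Lemmas \ref{lem:normalF2} and \ref{lem:normalF3} for the containment of $SL_2(\field)'$ in $\langle A\rangle$ or $\mathcal Q$, use Theorem \ref{prop:trace} (via the trace values $1=2+1^2$ and $0=2+1^2$) together with Proposition \ref{id} to recognize the nonscalar elements and $I_2$ as single commutators, handle $-I_2$ with Corollary \ref{cor:-I_2}, and get sharpness and strictness from Proposition \ref{id} and the order counts. The only cosmetic difference is your use of nontriviality of the derived subgroup to force equality with the order-$3$ group $\langle A\rangle$, which the paper instead gets directly from exhibiting its elements as commutators.
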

%Two is the least upper bound to the number of commutator of ${\cal U}_2$-matrix factors over all $\mathbb F$ with $|\mathbb F|\leq 3$.
\begin{proof}
    Suppose $|\field|=2$. If $A$ is as in Lemma \ref{lem:normalF2}, then $SL_2(\field)'\subseteq\langle A\rangle$. Since $\chara(\mathbb F)=2$ and $\tr(A)=\tr(A^2)=1=2+1^2$, it follows that $A$ and $A^2$ are commutators of ${\cal U}_2$-matrices by Theorem \ref{prop:trace}. %to direct calculations revealing that  $A=\left[J_2(1)^\top,J_2(1)\right]$ and $A^2=\left[J_2(1),J_2(1)^\top\right]$.
    Hence, $SL_2(\field)'=\langle A\rangle<SL_2(\field)$. %Since $J_2(1)$ and $J_2(1)^\top$ are ${\cal U}_2$-matrices, the factorizations also 
    In particular, every matrix in $SL_2(\field)'$ is a commutator of ${\cal U}_2$-matrices.

    Suppose $|\field|=3$. If ${\cal Q}$ is as in Lemma \ref{lem:normalF3}, then  $SL_2(\field)'\subseteq \mathcal Q$.
Equality holds by showing that every element of $\mathcal Q$ can be written as products of commutators of ${\cal U}_2$-matrices (necessarily, elements of $SL_2(\field)$). Note that all elements of ${\cal Q}\setminus\{-I_2\}$ are commutators of ${\cal U}_2$-matrices due to Proposition \ref{id} and Theorem \ref{prop:trace}. On the other hand, Corollary \ref{cor:-I_2} ensures that $-I_2$ is a product of two commutators of ${\cal U}_2$-matrices since $\textup{char}(\mathbb F)=3$ and $-1=2=a^2+b^2$ where $a=b=1$.
%For $-I_2$, observe that $-I_2=[X,Y][Y^{-1},X^{-1}]$ where $X=[J_2(1)^\top]^{-1}$ and %$Y=\begin{bmatrix}
%        0 & 2\\
%        1 & 2
%    \end{bmatrix}$ (both are ${\cal U}_2$-matrices by Proposition \ref{rmk:uniform}). 
   %If $-I_2$ is a commutator of unipotent matrices of index 2, then by Proposition \ref{prop:trace}, $-2=\textup{tr}(-I_2)=2+a^2$ for some $a\in\field$. Hence, $2=a^2$, a contradiction since no such $a$ exists in $\field$.
    Thus, every matrix in $\mathcal Q$ is a product of at most two commutators of ${\cal U}_2$-matrices and two is the smallest such number since $-I_2$ is not a commutator of ${\cal U}_2$-matrices due to Proposition \ref{id}. Consequently, $SL_2(\field)'=\mathcal Q<SL_2(\field)$.\end{proof}

%Now, the minimal polynomials of $\mathbf{i},\mathbf{j},\mathbf{k}$ are all equal to $x^2+1$ which is irreducible over $\field$, and hence the rational canonical forms of $\mathbf{i},\mathbf{j},\mathbf{k}$ are all equal to %$\begin{bmatrix}
%        0 & -1\\
%        1 & 0
 %   \end{bmatrix}$. Note that $\begin{bmatrix}
  %      0 & -1\\
  %      1 & 0
  %  \end{bmatrix}=\left[J_2(1),J_2(1)^\top\right]$, and so every nonscalar matrix in $\mathcal Q$ is a commutator of ${\cal U}_2$-matrices by Proposition \ref{prop:rmk2.1}. 

When $|\mathbb F|\leq 3$, Theorem \ref{thm:Z2Z3} implies that there are elements of $SL_2(\mathbb F)$, e.g., $J_2(1)$ and $ J_2(1)^\top$, which are not products of commutators of ${\cal U}_2$-matrices or matrices in $SL_2(\field)$. %Since ${\cal U}_2$-matrices are in $SL_2(\field)$, the theorem further implies that there are matrices in $SL_2(\field)$ that cannot be written as a product of commutators of ${\cal U}_2$-matrices. %In fact, more is true: if a matrix in $SL_2(\field)$ can be written written as a finite product of commutators of matrices in $SL_2(\field)$ (or an element of $SL_2(\field)'$), then it can also be written as a finite product of commutators of unipotent matrices of index 2.

\subsection{Fields with at least four elements}
The situation when $|\field|\geq4$ is different. We show that every matrix in $SL_2(\field)$ is a product of at most three commutators of $\mathcal U_2$-matrices. 
 
 %We first discuss the factorization of some special matrices in $SL_2(\field)$ such as $J_2(1)$, $J_2(-1)$, and $\diag(a,a^{-1})$ where $a\in\field\setminus\{0\}$.

   % We say that $a\in\field$ is a \textit{\textup(perfect square\textup) square in $\field$} if there exists $b\in\field$ such that $a=b^2$. 
   %A \textit{(perfect) square in} $\mathbb F$ is an element $a$ of the form $a=b^2$ for some $b\in \mathbb F$. If $\field$ is algebraically closed, then all elements of $\field$ are perfect squares in $\field$. 

%We consider the following lemma which is a result from \cite{bien}.
%    \begin{lemma}\cite[Lemma 8.1]{bien} \label{lem:bien}
 %       Let $|\field|\geq4$ and $x\in\field\setminus\{0\}$. Then, the matrix $\begin{bmatrix}
 %           0 & -1\\
 %           1 & x
 %       \end{bmatrix}$ is a commutator of $\mathcal U_2$-matrices if and only if $x=a^2+2$ for some $a\setminus\{0\}$.
 %   \end{lemma}
 %   In 1986, Sourour proved the following factorization result which states that every nonsingular nonscalar matrix is a product of two matrices with prescribed eigenvalues under a determinant condition \cite[Theorem 1]{sourour}.

%\begin{theorem}\cite[Theorem 1]{sourour} \label{thm:sourour}
 %       Let $\field$ be a field. Let $A\in GL_n(\mathbb F)$ be nonscalar and let $b_j,c_j\in\mathbb F$ for all $j\in\{1,...,n\}$ such that $\prod_{j=1}^nb_jc_j=\textup{det}(A)$. Then, there exist $B,C\in GL_n(\mathbb F)$ with $\sigma(B)=\{b_1,...,b_n\}$ and $\sigma(C)=\{c_1,...,c_n\}$ such that $A=BC$.
%\end{theorem}

\begin{lemma} \label{prop:perfsq}
     Let $|\mathbb F|\notin\{2,3,5\}$. There exists $b\in\mathbb F\setminus\{0\}$ such that $b^2\ne b^{-2}$. 
 \end{lemma}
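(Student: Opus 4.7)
The plan is to reformulate the condition: for $b\ne 0$, the equation $b^2=b^{-2}$ (multiplied through by $b^2$) is equivalent to $b^4=1$. So the goal becomes finding some $b\in\field\setminus\{0\}$ that is \emph{not} a root of the polynomial $x^4-1\in\field[x]$.

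The core of the proof is a root-counting argument. Since $x^4-1$ has degree $4$, it has at most $4$ roots in $\field$. Consequently, whenever $|\field^*|>4$, not every nonzero element can satisfy $b^4=1$, and we are done. This immediately handles every infinite field and every finite field with $|\field|\ge 7$. It is worth noting that the three excluded sizes $|\field|\in\{2,3,5\}$ are precisely the cases where $|\field^*|\le 4$ and, in fact, every nonzero element really does satisfy $b^4=1$ (trivially for $|\field|=2$, and by Fermat's little theorem for $|\field|=3$ and $|\field|=5$), so the hypothesis of the lemma is sharp.

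The only remaining case among the allowed field sizes is $|\field|=4$, which the counting bound does not cover since there $|\field^*|=3\le 4$. I would handle this case directly: $\field^*$ is cyclic of order $3$, so any generator $b$ satisfies $b^3=1$, whence $b^4=b\ne 1$, and the lemma holds with this $b$.

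There is no real obstacle; the only subtlety is that the uniform counting argument does not settle $|\field|=4$, so this case needs a separate one-line check exploiting the fact that in characteristic $2$ the nontrivial elements of $\mathbb F_4$ are cube roots of unity, not fourth roots.
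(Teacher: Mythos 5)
Your proposal is correct and rests on the same core idea as the paper's proof: every nonzero $b$ with $b^2=b^{-2}$ is a root of the degree-four polynomial $x^4-1$, so any field with more than four nonzero elements contains the desired $b$. The only (minor) difference is in dispatching $|\mathbb F|=4$: the paper notes that in characteristic $2$ one has $x^4-1=(x^2+1)^2$, so the distinct-root count drops below $3=|\mathbb F_4^*|$, whereas you use that $\mathbb F_4^*$ is cyclic of order $3$; both are one-line checks and the arguments are otherwise identical.
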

\begin{proof}
 We prove the contrapositive of the claim. Suppose that $x^2=x^{-2}$ for all $x\in\field\setminus\{0\}$. That is, each element of $\field\setminus\{0\}$ is a root of the degree four polynomial $f(x):=x^4-1$. If $\textup{char}(\mathbb F)\neq 2,$ then $|\mathbb F\setminus\{0\}|\leq 4$; in this case, $|\mathbb F|\in \{3,5\}.$ Otherwise, if $\textup{char}(\mathbb F)=2,$ then $f(x)=(x^2+1)^2$ and $|\mathbb F\setminus\{0\}|\leq 2$; in particular, $|\mathbb F|=2.$\end{proof}
 %   We prove the contrapositive of the claim. Suppose that $x^2=x^{-2}$ for all $x\in\field\setminus\{0\}$. That is, each element of $\field\setminus\{0\}$ is a root of $x^4-1$. Since $x^4-1$ has at most four roots in $\field\setminus\{0\}$, it follows that $|\field\setminus\{0\}|\leq 4$. Therefore, $|\field|\in\{2,3,5\}$.

% \begin{proof}
%      Let $S=\{a^2\ |\ a\in\field\setminus\{0\}\}$. We prove the contrapositive of the claim. Suppose that $x^2=x^{-2}$ for all $x\in\field\setminus\{0\}$. That is, each element of $\field\setminus\{0\}$ is a root of $x^4-1$.

%      Let $\chara(\field)=2$. We need to show that $|\field|=2$. Observe that $x^4-1=(x^2+1)^2$ which has at most two distinct roots in $\field\setminus\{0\}$. Thus, $|\field\setminus\{0\}|\leq2$ and therefore, $|\field|\leq3$. Since $\chara(\field)=2$, we have that $|\field|=2$.

%      Let $\chara(\field)\ne2$. We need to show that $|\field|\in\{3,5\}$. Note that $x^4-1$ has at most four roots in $\field\setminus\{0\}$. Thus, $|\field\setminus\{0\}|\leq4$ and therefore, $|\field|\leq5$. Since $\chara(\field)\ne2$ and $\chara(\field)$ is prime, we have that $|\field|\in\{3,5\}$.
% \end{proof}
With the aid of the previous result, we obtain the next one which is analogous to Corollary \ref{thm:J2(-1)} (compare with Corollary \ref{cor:-I_2} as well).

\begin{proposition}\label{-1perfectsquare}
Let $|\mathbb{F}|\notin\{2,3,5\}$. If $-1=a^2$ for some $a\in \mathbb F$, then $-I_2$ is a product of at most two commutators of ${\cal U}_2$-matrices.
\end{proposition}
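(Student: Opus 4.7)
The plan is to reduce the statement to the sum-of-two-nonzero-squares criterion in Corollary \ref{cor:-I_2}: once we exhibit nonzero $c,d\in\field$ with $c^2+d^2=-1$, that corollary delivers $-I_2$ as a product of two commutators of ${\cal U}_2$-matrices.

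Since Corollary \ref{cor:-I_2} presumes $\chara(\field)\ne 2$, I would first dispatch the characteristic two case separately. There $-I_2=I_2$, and $I_2=[J_2(1),J_2(1)]$ is already a single commutator of ${\cal U}_2$-matrices, so the bound holds trivially.

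For $\chara(\field)\ne 2$, the hypothesis $a^2=-1$ forces $a\ne 0$, and Lemma \ref{prop:perfsq} (applicable since $|\field|\notin\{2,3,5\}$) supplies some $b\in\field\setminus\{0\}$ with $b^2\ne b^{-2}$, equivalently $b^4\ne 1$, equivalently $b^2\notin\{1,-1\}$. The key step is to propose the candidates
\[c:=\frac{a(b^2+1)}{2b},\qquad d:=\frac{b^2-1}{2b},\]
which arise naturally from the factorization $c^2+d^2=(c+ad)(c-ad)$ (valid because $a^2=-1$): setting $c+ad=ab$ and $c-ad=ab^{-1}$ produces product $a^2=-1$ and solves for $c,d$ as above. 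A direct computation then yields $c^2+d^2=-1$, and the conditions $b^2\ne -1$ and $b^2\ne 1$ force $c\ne 0$ and $d\ne 0$, respectively. Corollary \ref{cor:-I_2} then finishes the argument.

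The main obstacle is producing this explicit pair $(c,d)$; this is exactly where the hypothesis $a^2=-1$ and the parameter $b$ furnished by Lemma \ref{prop:perfsq} are used in tandem, and everything else is routine verification.
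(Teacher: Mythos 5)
Your proof is correct, but it takes a genuinely different route from the paper's. The paper's argument is a one-line direct factorization: with $b$ from Lemma \ref{prop:perfsq}, it writes $-I_2=\diag(b^2,b^{-2})\,\diag((ab^{-1})^{2},(ab^{-1})^{-2})$, checks that each factor is a nonscalar diagonal matrix whose first entry is a nonzero perfect square different from $\pm1$, and invokes Corollary \ref{lem:hou} to conclude each factor is itself a commutator; this works uniformly in all characteristics, so no case split is needed. You instead route the statement through the sum-of-two-nonzero-squares criterion of Corollary \ref{cor:-I_2}, which forces you to treat $\chara(\field)=2$ separately (correctly, since there $-I_2=I_2$ is a single commutator), and then to manufacture nonzero $c,d$ with $c^2+d^2=-1$ via the factorization $c^2+d^2=(c+ad)(c-ad)$. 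Your choices $c=a(b^2+1)/(2b)$ and $d=(b^2-1)/(2b)$ do satisfy $c^2+d^2=-1$, and $b^4\ne1$ rules out $c=0$ and $d=0$, so the argument is sound. What your approach buys is an explicit witness for the sum-of-squares representation of $-1$ when $-1$ is a square (a refinement of a classical fact to \emph{nonzero} squares), making the link between Proposition \ref{-1perfectsquare} and Corollary \ref{cor:-I_2} transparent; what the paper's approach buys is brevity and uniformity across characteristics, at the cost of not exhibiting that connection.
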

\begin{proof} By Lemma \ref{prop:perfsq}, there exists $b\in\field\setminus\{0\}$ such that $b^2\ne b^{-2}$. Then $
    -I_2=\textup{diag}(b^2,b^{-2})\ \textup{diag}((ab^{-1})^{2},(ab^{-1})^{-2}) $
    is a product of two commutators of $\mathcal U_2$-matrices by Corollary \ref{lem:hou}.
\end{proof}

Proposition \ref{thm:-I_2} will handle the case when $-1$ is not necessarily a perfect square.
    \begin{proposition} \label{thm:SL2}
        Let $|\mathbb F|\geq4$. Every nonscalar matrix in $SL_2(\mathbb F)$ is a product of at most two commutators of $\mathcal U_2$-matrices.
    \end{proposition}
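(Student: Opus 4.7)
The plan is to construct, for every nonscalar $A\in SL_2(\mathbb F)$, matrices $B,C\in SL_2(\mathbb F)$ with $A=BC$ such that each of $B$ and $C$ is nonscalar and has trace lying in $2+(\mathbb F^*)^2$. By Theorem \ref{prop:trace}, $B$ and $C$ will automatically be commutators of $\mathcal U_2$-matrices. Using Proposition \ref{prop:rmk2.1} together with the companion-matrix reduction performed in the proof of Theorem \ref{prop:trace}, I may assume $A=\begin{bmatrix}0 & -1\\ 1 & t\end{bmatrix}$, where $t=\tr(A)$.

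For parameters $\mu\in\mathbb F\setminus\{0,\pm 1\}$ and $s\in\mathbb F\setminus\{0\}$ to be chosen, I would set
\[
B:=\begin{bmatrix}0 & -\mu^{-2}\\ \mu^2 & 2+s^2\end{bmatrix} \quad\text{and}\quad C:=B^{-1}A=\begin{bmatrix}\mu^{-2} & t\mu^{-2}-(2+s^2)\\ 0 & \mu^2\end{bmatrix}.
\]
A direct computation yields $\det(B)=1$, $\tr(B)=2+s^2$, and $\tr(C)=\mu^2+\mu^{-2}=2+(\mu-\mu^{-1})^2$; both traces are of the required form since $s\neq 0$ and $\mu\neq\pm 1$. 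The matrix $B$ is nonscalar because its $(1,2)$-entry $-\mu^{-2}$ is nonzero, while $C$ is nonscalar provided either $\mu^4\neq 1$ (so the diagonal entries $\mu^{-2},\mu^2$ are distinct) or its off-diagonal entry $t\mu^{-2}-(2+s^2)$ is nonzero. Theorem \ref{prop:trace} then certifies both $B$ and $C$ as commutators of $\mathcal U_2$-matrices, realizing $A=BC$ as a product of two such commutators.

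The remaining task is to exhibit admissible $\mu,s$ for every $t\in\mathbb F$ under the hypothesis $|\mathbb F|\geq 4$. If $\chara(\mathbb F)=2$, then any $\mu\in\mathbb F\setminus\{0,1\}$ satisfies $\mu^4\neq 1$, so any $s\in\mathbb F^*$ works. If $\chara(\mathbb F)\neq 2$ and $-1\notin(\mathbb F^*)^2$, then $\mu^2=-1$ is impossible, so any $\mu\in\mathbb F\setminus\{0,\pm 1\}$ together with any $s\in\mathbb F^*$ work. If $\chara(\mathbb F)\neq 2$, $-1\in(\mathbb F^*)^2$, and $|\mathbb F|\geq 7$, I pick $\mu\in\mathbb F\setminus\{0,\pm 1,\pm i\}$ (nonempty), forcing $\mu^4\neq 1$. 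The delicate case is $\mathbb F=\mathbb F_5$, where $\mathbb F\setminus\{0,\pm 1\}=\{\pm 2\}$ forces $\mu^2=-1$; I instead ensure the off-diagonal of $C$ is nonzero by picking $s\in\{\pm 1,\pm 2\}$ with $s^2\neq -t-2$, which is feasible because the two nonzero squares $\{1,4\}$ of $\mathbb F_5$ cannot both coincide with the single value $-t-2$. The principal obstacle is precisely this small-field corner $\mathbb F=\mathbb F_5$, which must be handled by a direct count of squares; elsewhere the construction is uniform.
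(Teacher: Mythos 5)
Your proof is correct, but it takes a genuinely different route from the paper's. The paper invokes Sourour's factorization theorem to write $A=BC$ with $\sigma(B)=\sigma(C)=\{b^2,b^{-2}\}$ for some $b$ with $b^2\neq b^{-2}$ (Lemma \ref{prop:perfsq}), so that both factors are similar to $\diag(b^2,b^{-2})$ and hence are commutators by Corollary \ref{lem:hou}; the field $\mathbb F_5$ is then handled separately via $\sigma(B)=\sigma(C)=\{-1\}$, a case analysis on minimal polynomials, and the identity $J_2(1)=D^2$ with $D$ a commutator. You instead avoid Sourour's theorem entirely: after the standard reduction of a nonscalar $2\times 2$ matrix to its companion matrix (valid since such a matrix is nonderogatory), you exhibit $B$ explicitly with $\det(B)=1$ and $\tr(B)=2+s^2$, and compute $C=B^{-1}A$ in closed form as an upper triangular matrix with $\tr(C)=\mu^2+\mu^{-2}=2+(\mu-\mu^{-1})^2$, certifying both factors directly by the trace criterion of Theorem \ref{prop:trace}. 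I checked the matrix computations and the nonscalarity conditions, and your case analysis on $\mu$ (including the count showing that some $s^2\in\{1,4\}$ avoids $-t-2$ over $\mathbb F_5$) is complete for all $|\mathbb F|\geq 4$. Your argument is more elementary and self-contained, and its $\mathbb F_5$ case is considerably lighter; the paper's route has the advantage of uniformity, since Sourour's theorem is the same engine driving the $n\times n$ arguments in Propositions \ref{mainnonscalar} and \ref{mainlem2}. One small point worth smoothing: the companion-matrix reduction in the proof of Theorem \ref{prop:trace} is carried out under the hypothesis $\tr(A)=2+\alpha^2$ with $\alpha\neq 0$, which excludes the eigenvalue $\lambda=1$; for your purposes you should cite the general fact that any nonscalar matrix in $M_2(\mathbb F)$ is similar to the companion matrix of its characteristic polynomial, which covers $\tr(A)=2$ as well.
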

    \begin{proof}
        Let $|\mathbb F|\geq4$ and $A\in SL_2(\field)$ be a nonscalar matrix. By \cite[Theorem 1]{sourour}, $A=BC$ for some $B,C\in GL_2(\mathbb F)$ where $\sigma(B)$ and $\sigma(C)$ can be specified as long as $\det(B)\det(C)=\det(A)=1$.
        
        If $|\field|\ne5$ (implying $|\mathbb F|\notin\{2,3,5\}$ by assumption), then there exists $b\in\mathbb F\setminus\{0\}$ such that $b^2\ne b^{-2}$ due to Lemma \ref{prop:perfsq}. Take $\sigma(B)=\sigma(C)=\{b^2,b^{-2}\}$. Since $b^2\ne b^{-2}$, $B$ and $C$ are similar to $\diag(b^2,b^{-2})$,
        which is a commutator of $\mathcal U_2$-matrices by Corollary \ref{lem:hou}. Hence, $A$ is a product of two commutators of $\mathcal U_2$-matrices.% by Proposition \ref{prop:rmk2.1}. 

        Suppose that $|\field|=5$. Take $\sigma(B)=\sigma(C)=\{-1\}$. It follows that the minimal polynomials $m_B(x),m_C(x)\in\{x+1,(x+1)^2\}$. Two cases arise: $m_B(x)=m_C(x)$ or not. Suppose $m_B(x)=m_C(x)$. In this case, $m_B(x),m_C(x)$ cannot be both linear since $A$ is a nonscalar matrix. Hence, $m_B(x)=m_C(x)=(x+1)^2$, and so $B$ and $C$ are similar to $J_2(-1)$. Since $-1=2^2$ in $\field$, Proposition \ref{prop:rmk2.1} and Corollary \ref{thm:J2(-1)} ensure that $B,C$ are commutators of $\mathcal U_2$-matrices. Thus, $A$ is a product of two commutators of $\mathcal U_2$-matrices. Suppose $m_B(x)\neq m_C(x)$. Assume $m_B(x)=x+1$ and $m_C(x)=(x+1)^2$ (the case $m_B(x)=(x+1)^2$ and $m_C(x)=x+1$ is analogous). Then there exists $P\in GL_2(\field)$ such that
        \[
        A=BC=(-I_2)PJ_2(-1)P^{-1}=PJ_2(1)^{-1}P^{-1}=(PJ_2(1)P^{-1})^{-1}.
        \]
       Note that $J_2(1)=D^2$ where $D=\begin{bmatrix}
            -1 & 2\\
            0 & -1
        \end{bmatrix}$ is a commutator of $\mathcal U_2$-matrices by Theorem \ref{prop:trace}. Proposition \ref{prop:rmk2.1} guarantees that $A$ is a product of two commutators of $\mathcal U_2$-matrices. 
    \end{proof}

%    If $\chara(\field)=2$, then the only scalar matrix in $SL_2(\field)$ is $I_2$ which is a commutator of $\mathcal U_2$-matrices by Proposition \ref{id}. If $\chara(\field)\ne2$, then the scalar matrices in $SL_2(\field)$ are $I_2$ and $-I_2$. It remains to investigate the decomposition of $-I_2$ as a product of commutators of $\mathcal U_2$-matrices. We prove the following proposition for the matrix $-I_2$ borrowing some techniques from \cite{bien}.

 For some fields, if $-I_2$ is a product of commutators of ${\cal U}_2$-matrices, then the number of factors is at least three (see remarks after Corollary \ref{cor:-I_2}). Such factorization of $-I_2$ into three commutators is indeed possible.

    \begin{proposition} \label{thm:-I_2}
        Let $|\field|\geq 5$ and $\textup{char}(\mathbb F)\neq 2$. Then $-I_2$ is a product of at most three commutators of $\mathcal U_2$-matrices.
    \end{proposition}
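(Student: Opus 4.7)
The plan is to reduce to Proposition \ref{thm:SL2} by writing $-I_2 = C\cdot(-C^{-1})$ for a suitably chosen non-identity commutator $C$ of $\mathcal U_2$-matrices. Pulling out $C$ consumes one commutator factor, and the remaining matrix $-C^{-1}\in SL_2(\mathbb F)$ will then be shown to be nonscalar, so Proposition \ref{thm:SL2} expends at most two more commutator factors on it. Adding up, this yields the required decomposition of $-I_2$ into at most three commutators of $\mathcal U_2$-matrices.

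To get started, I need at least one non-identity commutator of $\mathcal U_2$-matrices. A concrete choice that works for every characteristic is
\[
C := [J_2(1),J_2(1)^\top] = \begin{bmatrix} 3 & -1\\ 1 & 0\end{bmatrix},
\]
obtained by direct computation from the two type-(i) $\mathcal U_2$-matrices $J_2(1)$ and $J_2(1)^\top$; it is manifestly nonscalar since its off-diagonal entries are nonzero. As an alternative construction, Theorem \ref{prop:trace} realizes as commutators of $\mathcal U_2$-matrices all matrices in $SL_2(\mathbb F)$ whose trace has the form $2+\alpha^2$ for some nonzero $\alpha\in\mathbb F$, and since $|\mathbb F|\ge 5$ one can easily choose $\alpha$ so that $2+\alpha^2\ne \pm 2$, forcing the resulting matrix to be nonscalar.

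For this $C$ we have $\det(-C^{-1})=(-1)^2\det(C)^{-1}=1$, so $-C^{-1}\in SL_2(\mathbb F)$. Moreover, $-C^{-1}$ is nonscalar: if it were scalar, then so would be $C$, but $C$ is a non-identity commutator of $\mathcal U_2$-matrices, and the only scalar commutator of $\mathcal U_2$-matrices is $I_2$ by Proposition \ref{id}. Applying Proposition \ref{thm:SL2}, whose hypothesis $|\mathbb F|\ge 4$ is subsumed by $|\mathbb F|\ge 5$, we obtain commutators $D_1,D_2$ of $\mathcal U_2$-matrices (one possibly equal to $I_2$) with $-C^{-1}=D_1 D_2$. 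The identity $-I_2 = C D_1 D_2$ then delivers the required expression.

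The only point that needed care was exhibiting the initial non-identity commutator $C$, which is handled in one line via either route above; beyond that, the argument is a straightforward one-step reduction to Proposition \ref{thm:SL2}, with no further obstacle to negotiate. The characteristic hypothesis $\chara(\mathbb F)\ne 2$ is not used in the reduction itself but is essential to the statement, as it guarantees that $-I_2\ne I_2$ and hence that the claim is nontrivial.
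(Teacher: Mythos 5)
Your proof is correct, and it follows the same overall strategy as the paper: peel off one known nonscalar commutator $C$ so that the remaining factor $-C^{-1}$ is nonscalar, then spend at most two more commutators on it via Proposition~\ref{thm:SL2}. The difference is purely in the choice of $C$. The paper takes $C=\diag(b^2,b^{-2})$ with $b^2\neq b^{-2}$ (a commutator by Corollary~\ref{lem:hou}), which requires Lemma~\ref{prop:perfsq} and therefore forces a separate treatment of $|\field|=5$ via the factorization $-I_2=J_2(-1)J_2(1)$ and Corollary~\ref{thm:J2(-1)}. Your explicit choice $C=[J_2(1),J_2(1)^{\top}]=\begin{bmatrix}3&-1\\1&0\end{bmatrix}$ is a commutator of $\mathcal U_2$-matrices by construction over any field and is visibly nonscalar, so it handles $|\field|=5$ and $|\field|>5$ uniformly and dispenses with both Lemma~\ref{prop:perfsq} and Corollary~\ref{lem:hou}; indeed your argument only ever uses $|\field|\geq 4$ (through Proposition~\ref{thm:SL2}), the stated hypotheses serving only to make the claim nontrivial. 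The verification that $-C^{-1}$ is nonscalar (scalar $-C^{-1}$ would force $C$ scalar, contradicting Proposition~\ref{id} or direct inspection) is exactly the point that needs checking, and you check it.
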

    \begin{proof} 
%If $\chara(\mathbb F)=2$, then $-I_2=I_2$ is a commutator of ${\cal U}_2$-matrices by Proposition \ref{id}. Assume $\chara(\mathbb F)\neq 2$. 
%Under the assumption $|\mathbb F|\geq 5$, we consider two cases. 
If $|\field|=5$, then $-I_2=J_2(-1)J_2(1)$ is a product of three commutators of $\mathcal U_2$-matrices due to Corollary \ref{thm:J2(-1)} and Proposition \ref{thm:SL2}. Assume $|\field|>5$. By Lemma \ref{prop:perfsq}, there exists $b\in\mathbb F\setminus\{0\}$ such that $b^2\ne b^{-2}$. In this case, $[-\textup{diag}(b^2,b^{-2})]^{-1}\in SL_2(\mathbb F)$ is a nonscalar matrix. It follows that $-I_2=\textup{diag}(b^2,b^{-2})[-\textup{diag}(b^2,b^{-2})]^{-1}$ is a product of three commutators of ${\cal U}_2$-matrices due to Corollary \ref{lem:hou} and  Proposition \ref{thm:SL2}. 
    \end{proof}

    %    Since $b^2\ne b^{-2}$, Corollary \ref{lem:hou} and Proposition \ref{prop:rmk2.1} imply that first factor is a commutator of $\mathcal U_2$-matrices. On the other hand, the second factor is a product of two commutators of $\mathcal U_2$-matrices by Theorem \ref{thm:SL2}. Hence, $-I_2$ is a product of three commutators of $\mathcal U_2$-matrices.
        
    The main result of this subsection follows from Propositions \ref{id}, \ref{thm:SL2}, \ref{thm:-I_2}, and Corollary \ref{cor:-I_2}.
    
    \begin{theorem} \label{cor:SL2}
        Let $|\field|\geq4$. Every matrix in $SL_2(\mathbb F)$ is a product of at most three commutators of $\mathcal U_2$-matrices. In particular, if $\textup{char}(\mathbb F)=2,$ then every matrix in $SL_2(\mathbb F)$ is a product of at most two commutators of $\mathcal U_2$-matrices; if $\chara(\mathbb F)\neq 2$, then every matrix in $SL_2(\mathbb F)$ is a product of at most two commutators of $\mathcal U_2$-matrices if and only if $-1=a^2+b^2$ for some nonzero $a,b\in \mathbb F$. 
    \end{theorem}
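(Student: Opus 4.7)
The plan is to split $SL_2(\mathbb F)$ into its scalar and nonscalar parts and handle each separately. By Proposition \ref{thm:SL2}, every nonscalar matrix in $SL_2(\mathbb F)$ is already a product of at most two commutators of $\mathcal U_2$-matrices, so only the scalar matrices need extra attention. The scalar matrices in $SL_2(\mathbb F)$ are the $\lambda I_2$ with $\lambda^2 = 1$: the matrix $I_2$ alone if $\chara(\mathbb F)=2$, and both $\pm I_2$ otherwise. Since $I_2$ is trivially a product of zero commutators, and $-I_2$ (when $\chara(\mathbb F)\ne 2$) is a product of at most three commutators by Proposition \ref{thm:-I_2}, combining with the nonscalar bound gives the main assertion: every matrix in $SL_2(\mathbb F)$ is a product of at most three commutators of $\mathcal U_2$-matrices.

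For the characteristic-two refinement, I would just note that $-I_2 = I_2$, so $I_2$ is the unique scalar matrix and the bound of two from Proposition \ref{thm:SL2} already covers every element of $SL_2(\mathbb F)$.

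For the biconditional in characteristic not two, the forward direction is quick: assuming $-1 = a^2+b^2$ for nonzero $a,b\in\mathbb F$, Corollary \ref{cor:-I_2} writes $-I_2$ as a product of two commutators, and together with Proposition \ref{thm:SL2} the bound of two extends to every element of $SL_2(\mathbb F)$. For the converse, suppose every matrix in $SL_2(\mathbb F)$ is a product of at most two commutators; in particular, so is $-I_2$. Because $-I_2 \ne I_2$, Proposition \ref{id} rules out $-I_2$ being a single commutator, so the factorization must use exactly two commutators, and Corollary \ref{cor:-I_2} then yields the desired sum-of-squares representation $-1 = a^2+b^2$. The one subtle step, and the only place where care is needed, is this use of Proposition \ref{id} to exclude the one-commutator factorization; everything else is a straightforward assembly of the propositions cited in the statement.
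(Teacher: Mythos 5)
Your proof is correct and assembles exactly the ingredients the paper itself cites for this theorem (Propositions \ref{id}, \ref{thm:SL2}, \ref{thm:-I_2} and Corollary \ref{cor:-I_2}) in the intended way, including the careful use of Proposition \ref{id} to reduce the ``at most two'' hypothesis on $-I_2$ to the ``exactly two'' situation covered by Corollary \ref{cor:-I_2}. The only step worth making explicit is that Proposition \ref{thm:-I_2} is stated for $|\field|\geq 5$ with $\chara(\field)\neq 2$, which is available here because a field with $|\field|\geq 4$ and $\chara(\field)\neq 2$ cannot have exactly four elements.
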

%In particular, three is the least upper bound to the number of commutator of ${\cal U}_2$-matrix factors over all $\mathbb F$ with $|\mathbb F|\geq 4$ 
%%%%%%%%%%%%%%%%%%%%%%%%%%%%%%%%%%%%%%%%%%%%%%%%%%%%%%%%%%%%%%%%%%%%%%
%%%%%%%%%%%%%%%%%%%%%%%%%%%%%%%%%%%%%%%%%%%%%%%%%%%%%%%%%%%%%%%%%%%%%%
%%%%%%%%%%%%%%%%%%%%%%%%%%%SL_n%%%%%%%%%%%%%%%%%%%%%%%%%%%%%%%%%%%%%%%
%%%%%%%%%%%%%%%%%%%%%%%%%%%%%%%%%%%%%%%%%%%%%%%%%%%%%%%%%%%%%%%%%%%%%%
%%%%%%%%%%%%%%%%%%%%%%%%%%%%%%%%%%%%%%%%%%%%%%%%%%%%%%%%%%%%%%%%%%%%%%
 \section{Decomposition of matrices in \texorpdfstring{$SL_n(\field)$}{} into commutators}
\label{sec:SLn}
\setcounter{MaxMatrixCols}{20}
 %We extend the factorization in the previous section to $n\times n$ matrices for $n>2$ 
% For this section, 
In this section, we show that if $n>2$, then every matrix in $SL_n(\mathbb F)$ is a product of at most four commutators of $\mathcal U_2$-matrices provided $|\field|\geq 4$.

As seen from Corollary \ref{thm:J2(1)}, $J_2(1)$ is \textit{not} a commutator of $\mathcal U_2$-matrices. Interestingly, we have the following result.
%$I_n\oplus J_2(1)$ is a commutator of $\mathcal U_2$-matrices. 
\begin{proposition}\label{lem:I+J2(1)}
    The matrix $I_n\oplus J_2(1)$ is a commutator of $\mathcal U_2$-matrices.
\end{proposition}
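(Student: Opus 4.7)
The plan is to exhibit two explicit $\mathcal{U}_2$-matrices $X, Y \in M_{n+2}(\mathbb{F})$ whose commutator equals $I_n \oplus J_2(1)$. Writing $I_n \oplus J_2(1) = I_{n+2} + E_{n+1,n+2}$, where $E_{ij}$ denotes the $(n+2)\times(n+2)$ matrix unit with a $1$ in position $(i,j)$ and zeros elsewhere, the task reduces to producing a commutator that adds exactly this single off-diagonal $1$ to the identity. The natural factorization $E_{n+1,n+2} = E_{n+1,1}\cdot E_{1,n+2}$ suggests routing the commutator through a bridging index in the first $n$ coordinates; since such a bridge requires $n \geq 1$, this is consistent with the fact (Corollary \ref{thm:J2(1)}) that $J_2(1)$ alone cannot arise as such a commutator.

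Concretely, I would set $X := I_{n+2} + E_{n+1,1}$ and $Y := I_{n+2} + E_{1,n+2}$. Each is a $\mathcal{U}_2$-matrix because the defining matrix unit is nonzero and off-diagonal, hence squares to zero. Put $A := X - I_{n+2}$ and $B := Y - I_{n+2}$, so that $X^{-1} = I_{n+2} - A$ and $Y^{-1} = I_{n+2} - B$. A routine expansion gives $[X,Y] = I_{n+2} + AB - BA - ABA + BAB + ABAB$. Applying the matrix-unit rule $E_{ij}E_{k\ell} = \delta_{jk}E_{i\ell}$, we find $AB = E_{n+1,n+2}$, while $BA = 0$ since the column index of $B$ is $n+2 \neq n+1$, the row index of $A$. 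The vanishing of $BA$ forces the higher-order terms $ABA$, $BAB$, and $ABAB$ to vanish as well, so the expansion collapses to $[X,Y] = I_{n+2} + E_{n+1,n+2} = I_n \oplus J_2(1)$.

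There is no real obstacle beyond bookkeeping; the only thing to verify is that the matrix-unit products land in the expected positions, which is immediate from comparing row and column indices. The conceptual content is simply that a single extra coordinate beyond the $J_2(1)$ block provides room to route the commutator through an intermediate index, which suppresses every higher-order correction term and leaves exactly the desired transvection.
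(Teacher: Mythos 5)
Your proof is correct and takes essentially the same approach as the paper: both exhibit the target as a commutator of two elementary transvections via the standard relation $(I+aE_{ij})(I+bE_{jk})(I-aE_{ij})(I-bE_{jk})=I+abE_{ik}$ for distinct $i,j,k$. The only cosmetic difference is that the paper first reduces to the $3\times 3$ case $[1]\oplus J_2(1)$ using Proposition \ref{prop:I_2+prod} and then writes down $3\times 3$ matrices, whereas you work directly in size $n+2$.
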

\begin{proof}
    It suffices to show that $[1]\oplus J_2(1)$ is a commutator by Proposition \ref{prop:I_2+prod}. If $X=\begin{bmatrix}
        1 & 0 & 1\\
        0 & 1 & 0\\
        0 & 0 & 1
    \end{bmatrix}$ and $Y=\begin{bmatrix}
        1 & 0 & 0\\
        -1 & 1 & 0\\
        0 & 0 & 1
    \end{bmatrix}$ (both $\mathcal U_2$-matrices), then $[X,Y]=[1]\oplus J_2(1)$.
\end{proof}

In writing $J_n(1)$ as a product of commutators of ${\cal U}_2$-matrices, we make use of observations about products of matrices with special patterns of zero entries. To emphasize the size of a $k\times \ell $ zero matrix, we denote it by $0_{k,\ell}$ (or as defined earlier, $0_k$ if $k=\ell$). In the next result, $\mathbb F^n$ denotes the set of all $n\times 1$ vectors with entries in $\mathbb F.$

%Sometimes the subscripts are omitted; in these cases, context determines the size.

%\begin{lemma}\label{lem:zero}
%Let $k,n\in\mathbb N$ such that $k\leq n-2$. Suppose $R,S\in M_n(\mathbb F)$ such that $R=\begin{bmatrix} 0_k& R_{12}\\ 0& R_{22}\end{bmatrix}$, $S=\begin{bmatrix} S_{11}& S_{12}\\ 0 &S_{22}\end{bmatrix}$, and the first two columns of  $S_{22}\in M_{n-k}(\mathbb F)$ are zero. If $k=n-2$, then $RS=0_n$; if $k<n-2$, then $RS=\begin{bmatrix} 0_{k+2}& T_{12}\\ 0 &T_{22}\end{bmatrix}$.
%\end{lemma}

\begin{lemma}\label{lem:zero}
Given $k\geq 2$, $\ell\geq 1$, $x\in \mathbb F^2$, $B,C\in M_2(\mathbb F)$, let $A_e:=[A_{ij}]\in M_{2k}(\mathbb F)$ be strictly block upper triangular matrix where $A_{ij}=0_2$ whenever $j\geq i+3$, $A_{i,i+1}=B$ for all $i\in\{1,\ldots,k-1\}$, and if $k>2$, $A_{i,i+2}=C$ for all $i\in\{1,\ldots,k-2\}$; let $A_o:=\begin{bmatrix} 0_2 & Bx\\ 0_{1,2}& 0\end{bmatrix}$ if $\ell=1 $ and $A_o:=\begin{bmatrix} A_e& N\\ 0_{1,2\ell}& 0\end{bmatrix}$ if $\ell>1$ where $ N\in \mathbb{F}^{2l}$ is zero except possibly in its last four rows which are given by the rows of $\begin{bmatrix} Cx\\ Bx\end{bmatrix}$. The following hold:
\begin{enumerate}[(i)]
    \item $A_e^k=0_{2k}$ and $A_e^{k-1}=\begin{bmatrix}0_{2,2k-2}& B^{k-1}\\ 0_{2k-2}& 0_{2k-2,2}\end{bmatrix}$;
    \item $A_o^{\ell+1}=0_{2\ell+1}$ and $A_o^{\ell}=\begin{bmatrix} 0_{2\ell} & B^\ell x\\ 0_{1,2\ell} & 0\end{bmatrix}.$
\end{enumerate}
\end{lemma}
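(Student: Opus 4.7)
The plan is to treat the two parts in order, reducing (ii) to (i) by block matrix multiplication.

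For (i), I would exploit the strict block upper triangular structure, which has only two nonzero block super-diagonals: the first (copies of $B$) and, when $k>2$, the second (copies of $C$). Expanding the block matrix product shows
$$(A_e^m)_{ij}=\sum_{i=i_0<i_1<\cdots<i_m=j}A_{i_0 i_1}A_{i_1 i_2}\cdots A_{i_{m-1}i_m},$$
where the sum runs over length-$m$ paths in $\{1,\ldots,k\}$ whose step sizes all lie in $\{1,2\}$. Each such path has total displacement between $m$ and $2m$, so the block $(A_e^m)_{ij}$ vanishes unless $m\leq j-i\leq 2m$. Setting $m=k$ forces $j-i\geq k$, which is impossible for $i,j\in\{1,\ldots,k\}$, so $A_e^k=0_{2k}$. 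Setting $m=k-1$ forces $j-i=k-1$, hence $(i,j)=(1,k)$; moreover every such path must take $k-1$ unit steps (any $C$-step would overshoot), giving the unique contribution $B^{k-1}$.

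For (ii), I would handle $\ell=1$ directly by $3\times 3$ block computation. For $\ell>1$, the decomposition $A_o=\begin{bmatrix}A_e&N\\ 0_{1,2\ell}&0\end{bmatrix}$ (taking $k=\ell$ in the construction of $A_e$) together with an easy induction on $m\geq1$ yields
$$A_o^m=\begin{bmatrix}A_e^m&A_e^{m-1}N\\ 0_{1,2\ell}&0\end{bmatrix}.$$
At $m=\ell+1$, both top blocks vanish by (i), so $A_o^{\ell+1}=0_{2\ell+1}$. At $m=\ell$, only $A_e^{\ell-1}N$ survives; by (i), the sole nonzero block of $A_e^{\ell-1}$ is $B^{\ell-1}$ in position $(1,\ell)$, and by construction the $\ell$-th (final) block of $N$ is $Bx$, so $A_e^{\ell-1}N$ is the $2\ell\times 1$ column whose first two entries are $B^{\ell-1}(Bx)=B^{\ell}x$ and whose remaining entries are zero, matching the stated form of $A_o^\ell$.

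The main difficulty is not the algebra, which reduces to tracking which paths contribute, but the notational bookkeeping: the last row and column of $A_o$ have size $1$ rather than $2$, and one must notice that the $Cx$ component of $N$ never reaches the final formula for $A_o^\ell$, contributing only to intermediate powers $A_o^m$ for $m<\ell$. Once the combinatorial description in (i) is set up, everything in (ii) falls out mechanically.
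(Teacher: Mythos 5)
Your proposal is correct and follows essentially the same route as the paper: part (i) is the paper's ``repeated multiplication'' made precise via the standard path expansion of powers of a banded block upper triangular matrix, and part (ii) uses exactly the same identity $A_o^{m}=\begin{bmatrix}A_e^{m}&A_e^{m-1}N\\ 0_{1,2\ell}&0\end{bmatrix}$ together with (i). The only added value is that your displacement bound $m\leq j-i\leq 2m$ cleanly isolates why only the all-$B$ path survives in $A_e^{k-1}$ and why the $Cx$ block of $N$ drops out of $A_o^{\ell}$, which the paper leaves implicit.
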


%such that $k\leq n-2$. Suppose $R,S\in M_n(\mathbb F)$ such that $R=\begin{bmatrix} 0_k& R_{12}\\ 0& R_{22}\end{bmatrix}$, $S=\begin{bmatrix} S_{11}& S_{12}\\ 0 &S_{22}\end{bmatrix}$, and the first two columns of  $S_{22}\in M_{n-k}(\mathbb F)$ are zero. If $k=n-2$, then $RS=0_n$; if $k<n-2$, then $RS=\begin{bmatrix} 0_{k+2}& T_{12}\\ 0 &T_{22}\end{bmatrix}$.
\begin{proof}
The claims in (i) follow by repeated multiplication and recognizing that 
\[A_e=\begin{bmatrix} 0_2& B \\ 0_2& 0_2\end{bmatrix}\ \textup{if}\ k=2\ \textup{and}\ A_e=\begin{bmatrix}0_2& B& C& 0_2&\cdots& 0_2\\ 0_2& 0_2& B& C& \ddots & \vdots\\ 0_2& 0_2& 0_2& B&\ddots& 0_2 \\  0_2& 0_2& 0_2& 0_2&\ddots& C\\ \vdots& \ddots& \ddots& \ddots&\ddots& B\\  0_2& \cdots& 0_2& 0_2& 0_2& 0_2\\ \end{bmatrix}\ \textup{if}\ k>2.\] For (ii), the case $\ell=1$ is easily verified; if $\ell>1$, then use (i) and the fact that $A_o^i=\begin{bmatrix}A_e^i& A_e^{i-1}N\\ 0_{1,2\ell}& 0\end{bmatrix}$ for any $i$.

%Let $S_{22}=\begin{bmatrix} 0_{n-k,2}& Z\end{bmatrix}$ where $Z $ is absent if $k=n-2$. The desired form can be seen from 
%\[RS=\begin{bmatrix} 0_k& R_{12}S_{22}\\ 0 & R_{22}S_{22}\end{bmatrix}=\begin{bmatrix} 0_k& R_{12}\begin{bmatrix} 0_{n-k,2}& Z\end{bmatrix}\\ 0 & R_{22}\begin{bmatrix} 0_{n-k,2}& Z\end{bmatrix}\end{bmatrix}=\begin{bmatrix} 0_k& \begin{bmatrix} 0_{n-k,2}& R_{12}Z\end{bmatrix}\\ 0 & \begin{bmatrix} 0_{n-k,2}& R_{22}Z\end{bmatrix}\end{bmatrix}.\]
\end{proof}

%If  R_{12} and Z is "upper triangular" with nonzero "diagonal" entries, then RS\ne 0? 

%We now show that $J_n(1)$ is a product of at most two commutators of $\mathcal U_2$-matrices.
\begin{proposition}\label{lem:Jn(1)}
    Let $n>2$. The matrix $J_n(1)$ is a product of at most two commutators of $\mathcal U_2$-matrices.   
\end{proposition}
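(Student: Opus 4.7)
The plan is to construct, for each $n>2$, explicit $\mathcal{U}_2$-matrices $X_1, Y_1, X_2, Y_2$ whose product of commutators $[X_1,Y_1][X_2,Y_2]$ is similar to $J_n(1)$. By Proposition \ref{prop:rmk2.1}, exhibiting such a similar matrix is enough, since the property of being a product of $r$ commutators of $\mathcal{U}_2$-matrices is similarity-invariant.

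The construction naturally splits on the parity of $n$, mirroring the two parts of Lemma \ref{lem:zero}. For $n=2k$ with $k\geq 2$, I would invoke part (i) of the lemma, working with a strictly block upper triangular nilpotent structure built from $2\times 2$ blocks $B$ on the first superdiagonal and $C$ on the second. For $n=2\ell+1$ with $\ell\geq 1$, I would border the even construction by an additional zero row and an additional column with one block entry, invoking part (ii). In both cases one chooses specific $B,C$ (and $x$ in the odd case) so that the individual differences $X_i-I_n$ and $Y_i-I_n$ have square zero—making each $X_i,Y_i$ a genuine $\mathcal{U}_2$-matrix—while the nilpotent matrix $[X_1,Y_1][X_2,Y_2]-I_n$ has nilpotency index exactly $n$.

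The reduction to Lemma \ref{lem:zero} would proceed by expanding $[X_i,Y_i] = (I+N_i)(I+M_i)(I-N_i)(I-M_i)$ using $N_i^2=M_i^2=0_n$ and collecting terms. After multiplying the two commutators together, the nilpotent part should reorganize into exactly the $A_e$-shape (or $A_o$-shape) of the lemma up to conjugation by a permutation matrix. Then parts (i) and (ii) of Lemma \ref{lem:zero} give both the correct high-power behavior (the single surviving corner block $B^{k-1}$ or $B^\ell x$ is nonzero for suitable choices) and the vanishing at index $n$, forcing the Jordan form to be the single block $J_n(1)$ rather than any proper direct sum of smaller blocks.

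The main obstacle is twofold. First, one must choose the block entries so that the trailing corner term produced by the commutator product is nonzero; this is what prevents the Jordan structure from fragmenting. Second, the block-matrix arithmetic for $[X_1,Y_1][X_2,Y_2]$ is lengthy, and care is needed to arrange cancellations so that the result matches the shape handled by Lemma \ref{lem:zero}. The odd case is slightly more delicate because the bordering row/column must be synchronized with the vector $x$ so that the index jumps from $\ell$ (in the even block) to $\ell+1$ (after bordering), producing the full Jordan block of size $2\ell+1$.
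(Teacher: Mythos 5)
Your plan has a genuine gap at its core: you want $[X_1,Y_1][X_2,Y_2]-I_n$ to simultaneously have the $A_e$ (or $A_o$) shape of Lemma \ref{lem:zero} and have nilpotency index exactly $n$, but these two requirements are incompatible. Lemma \ref{lem:zero}(i) asserts that any matrix of the $A_e$ shape in $M_{2k}(\field)$ satisfies $A_e^{k}=0_{2k}$, so its nilpotency index is at most $k=n/2$; likewise $A_o^{\ell+1}=0_{2\ell+1}$ caps the index at $(n+1)/2$ in the odd case. Since $J_n(1)-I_n$ has nilpotency index $n>\lceil n/2\rceil$ for $n>2$, no matrix of either shape can be similar to $J_n(1)-I_n$, and the ``vanishing at index $n$'' you invoke cannot be extracted from the lemma. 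The lemma is calibrated for a \emph{single} commutator $[X,Y]-I_n$ of $\mathcal U_2$-matrices, which only reaches index $\lceil n/2\rceil$ and hence realizes a direct sum of two Jordan blocks of size about $n/2$ --- not the full block.

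The missing idea is an intermediate factorization. The paper first writes $J_n(1)$, up to similarity, as the product of $J_{\lceil n/2\rceil}(1)\oplus J_{\lfloor n/2\rfloor}(1)$ with a suitably positioned copy of $I_{n-2}\oplus J_2(1)$; the second factor is a single commutator by Proposition \ref{lem:I+J2(1)}, and the first factor is shown to be a single commutator $[X_n,Y_n]$ with $X_n,Y_n$ built from shifted direct sums of $J_2(1)$ blocks --- this is where Lemma \ref{lem:zero} enters, applied to $[X_n,Y_n]-I_n$ alone to pin down its Jordan structure as exactly two blocks of sizes $\lceil n/2\rceil$ and $\lfloor n/2\rfloor$. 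If you want to keep your direct-construction strategy, you would first need to establish such a splitting of $J_n(1)$ into two factors, each individually attainable as one commutator; the two commutators cannot both be absorbed into a single application of Lemma \ref{lem:zero}.
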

\begin{proof}
 %  If $n=2$, then $J_2(1)$ is a product of at most two commutators of $\mathcal U_2$-matrices due to Theorem \ref{thm:SL2}. 
    
   % Assume $n>2$.
    Observe that
    \[J_n(1)\ \textup{is similar to}\ \begin{cases}[J_\frac{n}{2}(1)\oplus J_\frac{n}{2}(1)][I_\frac{n-2}{2}\oplus J_2(1)\oplus I_\frac{n-2}{2}],&\textup{if}\ n\ \textup{is even}\\ [J_\frac{n+1}{2}(1)\oplus J_\frac{n-1}{2}(1)][I_\frac{n-1}{2}\oplus J_2(1)\oplus I_\frac{n-3}{2}],& \textup{if}\ n\ \textup{is odd}.\end{cases}\] By Propositions \ref{prop:rmk2.1}-\ref{prop:I_2+prod}, and \ref{lem:I+J2(1)}, it suffices to prove that the first factor above is similar to a commutator of ${\cal U}_2$-matrices.
    
    Consider \[\begin{cases}\vspace{0.2cm} X_n:=[1]\oplus\left(\bigoplus\limits_{i=1}^\frac{n-2}{2}J_2(1)\right)\oplus[1]\ \textup{and}\ Y_n:=\bigoplus\limits_{i=1}^\frac{n}{2}J_2(1),& \textup{if}\ n\ \textup{is even}\\ X_n:=[1]\oplus\left(\bigoplus\limits_{i=1}^\frac{n-1}{2}J_2(1)\right)\ \textup{and}\ Y_n:=\left(\bigoplus\limits_{i=1}^\frac{n-1}{2}J_2(1)\right)\oplus[1],& \textup{if}\ n\ \textup{is odd}\end{cases}\] which are $\mathcal U_2$-matrices. %Direct computations reveal that 
    % Then, 
    % \[
    % AB=\begin{bmatrix}
    %     1 & 1 &   &   &   &   &        &   &   &   &  \\
    %       & 1 & 1 & 1 &   &   &        &   &   &   &  \\
    %       &   & 1 & 1 &   &   &        &   &   &   &  \\
    %       &   &   & 1 & 1 & 1 &        &   &   &   &  \\
    %       &   &   &   &   &   &        &   &   &   &  \\
    %       &   &   &   &   &   & \ddots &   &   &   &  \\
    %       &   &   &   &   &   &        &   &   &   &  \\
    %       &   &   &   &   &   &        & 1 & 1 &   &  \\
    %       &   &   &   &   &   &        &   & 1 & 1 & 1\\
    %       &   &   &   &   &   &        &   &   & 1 & 1\\
    %       &   &   &   &   &   &        &   &   &   & 1\\
    % \end{bmatrix}
    % \]
    % and
    % \[
    % A^{-1}B^{-1}=\begin{bmatrix}
    %     1 & -1 &    &    &    &   &        &   &    &    &   \\
    %       & 1  & -1 & 1  &    &   &        &   &    &    &   \\
    %       &    & 1  & -1 &    &   &        &   &    &    &   \\
    %       &    &    & 1  & -1 & 1 &        &   &    &    &   \\
    %       &    &    &    &    &   &        &   &    &    &   \\
    %       &    &    &    &    &   & \ddots &   &    &    &   \\
    %       &    &    &    &    &   &        &   &    &    &   \\
    %       &    &    &    &    &   &        & 1 & -1 &    &   \\
    %       &    &    &    &    &   &        &   & 1  & -1 & 1 \\
    %       &    &    &    &    &   &        &   &    & 1  & -1\\
    %       &    &    &    &    &   &        &   &    &    & 1 \\
    % \end{bmatrix}.
    % \]
    Let $A_n:=[X_n,Y_n]-I_n$, $B:=\begin{bmatrix} -1& 1\\ 0& 1\end{bmatrix}$, $C:=\begin{bmatrix}0& 0\\ -1& 1\end{bmatrix}$, and $x:=\begin{bmatrix}1\\ 0\end{bmatrix}.$ If $n=2k$ where $k\geq 2$ (since $n>2$), then $A_{2k}$ has the form of $A_e$ in Lemma \ref{lem:zero}
    %\begin{equation}\label{evenA}
    %A_{2k}=
    %\begin{bmatrix}0_2& B& C& 0_2&\cdots& 0_2\\ 0_2& 0_2& B& C& \ddots & \vdots\\ 0_2& 0_2& 0_2& B&\ddots& 0_2 \\  0_2& 0_2& 0_2& 0_2&\ddots& C\\ \vdots& \ddots& \ddots& \ddots&\ddots& B\\  0_2& \cdots& 0_2& 0_2& 0_2& 0_2\\ \end{bmatrix}
    %\end{equation} 
    while if $n=2\ell+1$ where $\ell\geq 1$ (since $n>2$), then $A_{2\ell+1}$ has the form of $A_o$ in Lemma \ref{lem:zero}. Due to the strictly block upper triangular structure of $A_n$ and nonsingularity of $B$, $\sigma([X_n,Y_n])=\{1\}$, $\textup{nullity}([X_n,Y_n]-I_n)=2$, and hence, $[X_n,Y_n]$ only has two Jordan blocks corresponding to 1. Moreover, Lemma \ref{lem:zero} yields $A_n^{\lceil \frac{n}{2}\rceil}=0_n$ and $A_n^{\lceil \frac{n}{2}\rceil-1}\neq 0_n$
    %\[([X_n,Y_n]-I_n)^{\lceil \frac{n}{2}\rceil}=0_n\ \textup{and}\ ([X_n,Y_n]-I_n)^{\lceil \frac{n}{2}\rceil-1}\neq 0_n\] 
    since $B$ is nonsingular (here, $\lceil x\rceil$ denotes the ceiling function of $x\in\mathbb R$). Thus,
    \[[X_n,Y_n]\ \textup{is similar to}\ \begin{cases} J_{\frac{n}{2}}(1)\oplus J_{\frac{n}{2}}(1),& \textup{if}\ n\ \textup{is even}\\ J_\frac{n+1}{2}(1)\oplus J_\frac{n-1}{2}(1),& \textup{if}\ n\ \textup{is odd}.\end{cases}\] This completes the proof of the claim.\end{proof}

\begin{proposition} \label{mainnonscalar}
    Let $n>2$ and $|\field|\geq4$. Every nonscalar matrix in $SL_n(\mathbb F)$ is a product of at most four commutators of $\mathcal U_2$-matrices.
\end{proposition}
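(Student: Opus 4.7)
The plan is to apply Sourour's factorization theorem \cite{sourour}, which writes any nonscalar $A \in GL_n(\field)$ as $A = BC$ with prescribed spectra $\sigma(B)$ and $\sigma(C)$ subject only to $\det(B)\det(C) = \det(A) = 1$. The goal is to pick these spectra so that each of $B$ and $C$ is a product of at most two commutators of $\mathcal U_2$-matrices; then $A = BC$ is a product of at most four, as desired.

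First I would handle the case $|\field| \notin \{2,3,5\}$. Lemma \ref{prop:perfsq} supplies $b \in \field \setminus \{0\}$ with $b^2 \ne b^{-2}$, which together with $b \ne 0$ forces $b^2 \notin \{0,1,-1\}$. Choose $\sigma(B) = \sigma(C) = \{b^2, b^{-2}, 1, \ldots, 1\}$, which has product $1$. Since $b^2$ and $b^{-2}$ are simple eigenvalues distinct from $1$, $B$ is similar to $\diag(b^2, b^{-2}) \oplus U$ for some unipotent $U \in SL_{n-2}(\field)$; by Proposition \ref{prop:rmk2.1} I may replace $B$ by this direct sum. Then $\diag(b^2, b^{-2})$ is one commutator (Corollary \ref{lem:hou}), and $U = \bigoplus_i J_{k_i}(1)$ is at most two commutators by a case analysis: blocks with $k_i \geq 3$ are handled by Proposition \ref{lem:Jn(1)}, blocks $J_2(1)$ are absorbed into adjacent identity pieces via Proposition \ref{lem:I+J2(1)}, and Proposition \ref{prop:I_2+prod} (the max-rule for direct sums) combines these. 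Hence $B$ is at most two commutators, and the same argument bounds $C$.

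The case $|\field| = 5$ requires a separate argument since Lemma \ref{prop:perfsq} fails. Here I would exploit that $-1 = 2^2$ in $\field$, so by Corollary \ref{thm:J2(-1)} the matrix $J_2(-1)$ is a commutator, and hence $J_2(-1) \oplus I_{n-2}$ is a commutator by Proposition \ref{prop:I_2+prod}. The plan is to take $\sigma(B) = \sigma(C) = \{-1, -1, 1, \ldots, 1\}$ and to argue, using the flexibility in the off-diagonal entries of Sourour's triangular factors together with the hypothesis that $A$ is nonscalar, that $B$ and $C$ may be arranged similar to $J_2(-1) \oplus U$ (rather than to $-I_2 \oplus U$, which would not be controlled by the tools at hand). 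The rest of the argument then parallels the generic case.

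The main obstacle is the $|\field| = 5$ case and, more broadly, controlling the full Jordan structure (and not merely the spectrum) of $B$ in Sourour's factorization: at a repeated eigenvalue one must force a specific Jordan block structure, and for some unipotent Jordan forms that might arise for $U$, such as $J_2(1)^{\oplus r}$ with $r$ large, one must verify a decomposition into at most two commutators. The latter point can be handled by iterated product and direct-sum factorizations into the building blocks $\diag(b^2, b^{-2}) \oplus I$, $I \oplus J_2(1)$, $J_k(1)$ for $k \geq 3$, and $J_{\lceil n/2\rceil}(1) \oplus J_{\lfloor n/2\rfloor}(1)$ already established in the paper, but verifying that every unipotent pattern is covered is the main technical subtlety.
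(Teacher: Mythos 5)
Your overall strategy (Sourour's factorization plus the direct-sum max-rule of Proposition \ref{prop:I_2+prod}) is the right one, but your choice of prescribed spectrum creates two problems, one of which is a genuine gap. The gap is the $|\field|=5$ case. Sourour's theorem lets you prescribe the \emph{eigenvalues} of $B$ and $C$, not their Jordan structure, so with $\sigma(B)=\{-1,-1,1,\ldots,1\}$ you cannot force $B$ to be similar to $J_2(-1)\oplus U$ rather than to $(-I_2)\oplus U$; the "flexibility in the off-diagonal entries" you appeal to is not part of the theorem's conclusion. This matters: over $\field$ with $|\field|=5$ the squares are $\{1,4\}$, so $-1=4$ is not a sum of two nonzero squares, and by Corollary \ref{cor:-I_2} the matrix $-I_2$ is \emph{not} a product of two commutators of $\mathcal U_2$-matrices — only of three (Proposition \ref{thm:-I_2}). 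So if $B\sim(-I_2)\oplus U$ your bound for $B$ degrades to three and the total to six, not four. A second, smaller issue is your treatment of $J_2(1)$ summands of the unipotent part $U$: "absorbing into adjacent identity pieces" via Proposition \ref{lem:I+J2(1)} fails when $U=J_2(1)^{\oplus r}$ has no $J_1(1)$ summands, and you flag but do not close this.

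Both problems disappear with the paper's choice of spectrum: take $\sigma(B)=\sigma(C)=\{1\}$, so that $B$ and $C$ are unipotent and hence similar to direct sums of blocks $J_k(1)$. Then \emph{every} Jordan pattern is already covered by the established results, with no need to control which pattern occurs: $J_1(1)$ is trivial, $J_k(1)$ with $k\geq3$ is a product of at most two commutators by Proposition \ref{lem:Jn(1)}, and $J_2(1)$ — although not itself a commutator (Corollary \ref{thm:J2(1)}) — is a nonscalar matrix of $SL_2(\field)$ and hence a product of at most two commutators by Proposition \ref{thm:SL2}; this is precisely where the hypothesis $|\field|\geq4$ enters, and it works uniformly, including $|\field|=5$. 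Proposition \ref{prop:I_2+prod} then gives each of $B$ and $C$ as a product of at most two commutators, hence $A=BC$ as a product of at most four. If you prefer to keep your spectrum $\{b^2,b^{-2},1,\ldots,1\}$ for $|\field|\notin\{2,3,5\}$, the fix for the $J_2(1)$ blocks is the same appeal to Proposition \ref{thm:SL2}, but you would still need a correct argument for $|\field|=5$, and the all-ones spectrum supplies it for free.
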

\begin{proof}
   Let $A\in SL_n(\mathbb F)$ be a nonscalar matrix. By \cite[Theorem 1]{sourour}, there exist $B,C\in GL_n(\field)$ with $\sigma(B)=\sigma(C)=\{1\}$ such that $A=BC$. Hence, $B$ and $C$ are respectively similar to $\bigoplus\limits_{i=1}^rJ_{m_i}(1)$ and $\bigoplus\limits_{j=1}^sJ_{n_j}(1)$ for some $r,s\in \mathbb N$ where $1\leq m_i,n_j\leq n$ for all $i,j$. Both $B$ and $C$ are products of at most two commutators of ${\cal U}_2$-matrices due to Propositions \ref{prop:rmk2.1}-\ref{prop:I_2+prod}, \ref{thm:SL2}, and \ref{lem:Jn(1)}. Thus, $A$ is a product of at most four commutators of $\mathcal U_2$-matrices.
\end{proof}

In the above proof, even if $J_2(1)$ occurs as a direct summand of either $B$ or $C$, then $J_2(1)$ is a product of two commutators of ${\cal U}_2$-matrices due to the assumption that $|\mathbb F|\geq 4$ and Proposition \ref{thm:SL2}.

Next, we consider scalar matrices in $SL_n(\mathbb F)$. As seen from the proofs of Propositions \ref{thm:SL2}-\ref{thm:-I_2}, the field with $|\mathbb F|=5$ demands special attention. %In this case, $-1=a^2+b^2$ has no \textit{nonzero} solution in $\mathbb F$, and so $-I_2$ is a product of \textit{exactly three} commutators of ${\cal U}_2$-matrices due to 
%Propositions \ref{id}, \ref{thm:-I_2}, and Corollary \ref{cor:-I_2}. However, once $n>2 $ is assumed, $-I_n$ can now be written as a product of \textit{two} commutators of ${\cal U}_2$-matrices.

\begin{lemma}\label{F5}
Let $n\geq2$ be even and $|\mathbb F|=5.$ If $A\in SL_n(\mathbb F)$ is a scalar matrix, then $A$ is a product of at most four commutators of ${\cal U}_2$-matrices.
\end{lemma}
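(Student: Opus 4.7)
The plan is to enumerate the scalar matrices in $SL_n(\mathbb F_5)$ for even $n$ and dispatch each. Since $|\mathbb F_5^*|=4$, a scalar $\lambda I_n$ lies in $SL_n(\mathbb F_5)$ iff $\lambda^n=1$; this forces $\lambda\in\{1,-1\}$ when $n\equiv 2\pmod 4$ and $\lambda\in\{1,-1,2,3\}$ when $4\mid n$. The case $\lambda=1$ is trivial ($I_n=[X,X]$ for any $\mathcal U_2$-matrix $X$). For $\lambda=-1$ I would write $-I_n=\bigoplus_{i=1}^{n/2}(-I_2)$ and combine Proposition \ref{thm:-I_2} (which, as $\chara(\mathbb F_5)\neq 2$ and $|\mathbb F_5|\geq 5$, bounds $-I_2$ by three commutators) with Proposition \ref{prop:I_2+prod} to get at most three commutators for $-I_n$.

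The substantive case is $\lambda\in\{2,3\}$, which forces $4\mid n$ since $2$ and $3$ have order $4$ in $\mathbb F_5^*$. The decomposition $\lambda I_n=\bigoplus_{j=1}^{n/4}(\lambda I_4)$ together with Proposition \ref{prop:I_2+prod} reduces the problem to $\lambda I_4$; and since $3I_4=(2I_4)^{-1}$, Proposition \ref{prop:rmk2.1} further reduces it to $2I_4$ alone. I would then exhibit the factorization
\[
2I_4 \;=\; \textup{diag}(2,3,2,3)\cdot\textup{diag}(1,4,1,4),
\]
which is verified by direct multiplication modulo $5$, and argue that each factor is a product of at most two commutators of $\mathcal U_2$-matrices.

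The first factor is straightforward: $\textup{diag}(2,3,2,3)=\textup{diag}(2,3)\oplus\textup{diag}(2,3)$, and $\textup{diag}(2,3)$ is a nonscalar element of $SL_2(\mathbb F_5)$, hence at most two commutators by Proposition \ref{thm:SL2}; Proposition \ref{prop:I_2+prod} preserves this bound through the direct sum. The main obstacle is the second factor $\textup{diag}(1,4,1,4)$: via a permutation similarity and Proposition \ref{prop:rmk2.1}(iii) it has the same commutator count as $I_2\oplus(-I_2)$, but the naive bound $\max\{0,3\}=3$ from Proposition \ref{prop:I_2+prod} would give $5$ commutators overall, one too many. To save a commutator I would exploit the identity $J_2(-1)\,J_2(1)=-I_2$, which produces
\[
I_2\oplus(-I_2) \;=\; \bigl(I_2\oplus J_2(-1)\bigr)\cdot\bigl(I_2\oplus J_2(1)\bigr);
\]
the first factor is a single commutator by Corollary \ref{thm:J2(-1)} (applicable since $-1=2^2$ in $\mathbb F_5$) together with the ``$I_m\oplus B$'' clause of Proposition \ref{prop:I_2+prod}, and the second is a single commutator by Proposition \ref{lem:I+J2(1)}. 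This yields two commutators for $\textup{diag}(1,4,1,4)$, hence $2+2=4$ commutators for $2I_4$, completing the proof.
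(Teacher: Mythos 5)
Your proof is correct and follows essentially the same route as the paper's: the same case analysis, the same reduction of $\lambda\in\{2,3\}$ to $2I_4$ via $4\mid n$, and the identical factorization $2I_4=\diag(2,3,2,3)\cdot\diag(1,4,1,4)$ with the first factor handled by Proposition \ref{thm:SL2}. The only (cosmetic) difference is in the second factor: the paper multiplies two explicit $4\times 4$ commutators, each similar to $[1]\oplus J_2(-1)\oplus[1]$, whereas you use the cleaner identity $I_2\oplus(-I_2)=\bigl(I_2\oplus J_2(-1)\bigr)\bigl(I_2\oplus J_2(1)\bigr)$ together with Proposition \ref{lem:I+J2(1)} --- both arguments hinge on $J_2(-1)$ being a commutator over $\mathbb F_5$ because $-1=2^2$.
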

\begin{proof}
 Let $A\in SL_n(\field)$ such that $A=\lambda I_n$ for some $\lambda\in\field$. Note that $\lambda^n=\det(A)=1$. If $A=I_n$, then $A$ is a commutator of ${\cal U}_2$-matrices due to Proposition \ref{id}. If $A=4I_n$, then $A=-I_n=\bigoplus\limits_{i=1}^k -I_2$ since $\textup{char} (\mathbb F)=5$ and $n$ is even. By Propositions \ref{prop:I_2+prod} and \ref{thm:-I_2}, $A$ is a product of at most three commutators of ${\cal U}_2$-matrices. For the remaining cases $2I_n$ and $3I_n$, it suffices to prove the claim for $A=2I_n$ due to Proposition \ref{prop:rmk2.1} and $3I_n=(2I_n)^{-1}$. Since $n$ is even, $1=2^{n}=4^{\frac{n}{2}}=(-1)^{\frac{n}{2}}$. Hence, $n=4k$ for some $k\in \mathbb N$ since $\textup{char}(\mathbb F)=5$, and so $A=\bigoplus\limits_{i=1}^k 2I_4$. By Proposition \ref{prop:I_2+prod}, the claim about $A$ follows by showing that $2I_4$ is a product of at most four commutators of ${\cal U}_2$-matrices. Note that $2I_4=(B\oplus B)C$ where $B=\textup{diag}(2,3)$ and $C=\textup{diag}(1,-1,1,-1)$. Since $B\in SL_2(\mathbb F)$ is a nonscalar matrix, $B\oplus B$ is a product of at most two commutators of ${\cal U}_2$-matrices by Propositions \ref{prop:I_2+prod} and \ref{thm:SL2}. On the other hand, let $D=[1]\oplus \begin{bmatrix} -1& 0 & 1\\ 0 & 1& 0\\ 0 & 0& -1\end{bmatrix}$ and $E=[1]\oplus J_2(-1)\oplus[1]$ (both commutators of ${\cal U}_2$-matrices by Proposition \ref{prop:I_2+prod} and Corollary \ref{thm:J2(-1)}), and observe that \[DE=[1]\oplus\begin{bmatrix}1 & -1& 1\\ 0&-1&0\\ 0 & 0& -1\end{bmatrix}\]
 is similar to $C$. The claim follows.
\end{proof}

\begin{lemma}\label{prop:finperf}
Let $S=\{a^2|a\in\field\setminus\{0\}\}$. If $|\field|$ is even, then $|S|=|\field|-1$. If $|\field|$ is odd, then $|S|=\frac{|\field|-1}{2}$.
\end{lemma}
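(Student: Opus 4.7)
The plan is to view the squaring map $\phi:\field\setminus\{0\}\to\field\setminus\{0\}$, $\phi(a)=a^2$, as a homomorphism on the multiplicative group $\field^\times$ whose image is exactly $S$, and then analyze its kernel based on the parity of $|\field|$.

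First, I would note that $|\field|$ is either a power of $2$ (so $\chara(\field)=2$) or an odd prime power (so $\chara(\field)\ne 2$). In the even case, if $a^2=b^2$ for $a,b\in\field\setminus\{0\}$, then $(a-b)^2=a^2-2ab+b^2=a^2+b^2=0$ in characteristic $2$, forcing $a=b$. Thus $\phi$ is injective, so $|S|=|\img(\phi)|=|\field\setminus\{0\}|=|\field|-1$.

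Next, for $|\field|$ odd, I would compute the kernel of $\phi$: $\ker(\phi)=\{a\in\field\setminus\{0\}:a^2=1\}$ consists of the roots of $x^2-1=(x-1)(x+1)$, and since $\chara(\field)\ne 2$ we have $1\ne-1$, so $|\ker(\phi)|=2$. By the First Isomorphism Theorem for groups applied to $\phi$, $S=\img(\phi)\cong\field^\times/\ker(\phi)$, hence $|S|=\frac{|\field|-1}{2}$.

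There is no real obstacle here; the only subtle point is justifying injectivity of the Frobenius in the even case (handled by the characteristic-$2$ identity above) and verifying $1\ne-1$ in the odd case to get $|\ker(\phi)|=2$ exactly. Both are immediate.
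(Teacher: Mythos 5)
Your proof is correct and follows essentially the same route as the paper: both treat squaring as a homomorphism on $\field\setminus\{0\}$ with image $S$ and count via the kernel, which is trivial in characteristic $2$ and equal to $\{-1,1\}$ otherwise. Your direct injectivity argument in the even case is just an unwinding of the paper's observation that $\ker(\phi)=\{1\}$ there.
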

\begin{proof}
    The map $\phi:\field\setminus\{0\}\to\field\setminus\{0\}$ defined by $\phi(x)=x^2$ is a group homomorphism. By the First Isomorphism Theorem for groups, $\field\setminus\{0\}/\ker(\phi)\cong\textup{Im}(\phi)=S.$
    If $|\field|$ is even, then $\chara(\field)=2$ and so $\ker(\phi)=\{1\}$. Consequently, $|S|=|\field|-1$. On the other hand, $|\field|$ being odd implies $\chara(\field)\ne2$. In particular, $|\ker(\phi)|=|\{-1,1\}|=2$, and so $|S|=\frac{|\field|-1}{2}$.
\end{proof}

\begin{proposition} \label{mainscalar}
    Let $n\geq 2$ and $|\field|\geq4$. Let $A\in SL_n(\field)$ be a scalar matrix.
    \begin{enumerate}[(i)]
        \item If $n$ is odd, then $A$ is a product of at most two commutators of $\mathcal U_2$-matrices;
        \item Let $n$ be even. Then $A$ is a product of at most four commutators of $\mathcal U_2$-matrices. In addition, if $\chara(\field)=2$, then $A$ is product of at most two commutators of $\mathcal U_2$-matrices.
    \end{enumerate}
\end{proposition}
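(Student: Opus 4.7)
The plan is to decompose $A=\lambda I_n$ explicitly as a short product of diagonal matrices, each (after a similarity) a direct sum of $2\times 2$ ``pair'' blocks $\diag(\lambda^c,\lambda^{-c})$---commutators of $\mathcal{U}_2$-matrices by Corollary~\ref{lem:hou} when $\lambda^c$ is a perfect square different from $\pm1$---together with $I_2$ or, when unavoidable, $-I_2$ blocks controlled by Proposition~\ref{thm:-I_2}. Propositions~\ref{prop:I_2+prod} and~\ref{prop:rmk2.1} combine such block contributions and similarities into the required upper bound.

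For part (i), if $\lambda=1$ I invoke Proposition~\ref{id}. Otherwise, since $n$ is odd and $\lambda^n=1$, Bézout applied to $2r+ns=1$ yields $\lambda=(\lambda^r)^2$, a perfect square. Writing $n=2k+1$, I take
\[B_1:=\diag(\lambda,\lambda^{-1},\lambda^2,\lambda^{-2},\ldots,\lambda^k,\lambda^{-k},1)\quad\text{and}\quad B_2:=AB_1^{-1},\]
and compute that $B_2$'s diagonal entries are $\lambda^{1-i},\lambda^{1+i}$ (for $i=1,\ldots,k$) together with $\lambda$, whose exponents form the $n$ consecutive integers $\{1-k,\ldots,k+1\}$; reduced modulo $n$ (using $\lambda^n=1$), these are a permutation of $\{0,1,\ldots,n-1\}$. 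Pairing $\lambda^j\leftrightarrow\lambda^{n-j}=\lambda^{-j}$ realises $B_2$ as similar to $\bigoplus_{j=1}^k\diag(\lambda^j,\lambda^{-j})\oplus[1]$, the same model shape as $B_1$. Every $\lambda^c$ appearing in these blocks is a power of the square $\lambda$ (hence itself a square), and is never $-1$ since $\textup{ord}(\lambda)\mid n$ is odd. Corollary~\ref{lem:hou} (or the trivial case $\lambda^c=1$ yielding $I_2$) then makes each $2\times 2$ block a commutator, and Propositions~\ref{prop:I_2+prod}--\ref{prop:rmk2.1} promote $B_1$ and $B_2$ each to a single commutator, so $A=B_1B_2$ is a product of at most 2 commutators.

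For part (ii), set $d:=\textup{ord}(\lambda)$ and assume $\lambda\neq1$. If $d$ is odd---automatic in characteristic 2, because $\field^*$ has no 2-torsion there---then $d\mid n$ even forces $n-d\ge d\ge 3$ with both odd, so $A\sim\lambda I_d\oplus\lambda I_{n-d}$; part (i) applied to each summand together with Proposition~\ref{prop:I_2+prod} gives $A$ as at most 2 commutators, simultaneously establishing this subcase of the general bound and the characteristic-2 strengthening. If $d=2$ (so $\lambda=-1$, $\chara(\field)\neq2$), then $A=\bigoplus_{i=1}^{n/2}(-I_2)$; Proposition~\ref{thm:-I_2} (supplemented by Lemma~\ref{F5} when $|\field|=5$) bounds each $-I_2$ by 3 commutators, and Proposition~\ref{prop:I_2+prod} yields $A\le 3\le 4$. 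For the remaining subcase $d\ge 4$ even (necessarily $\chara(\field)\neq2$), the plan is to write $A=BC$ with $B:=\bigoplus_{i=1}^{n/2}\diag(\lambda^{2i},\lambda^{-2i})$ (so that each $\lambda^{2i}=(\lambda^i)^2$ is unconditionally a square, any block equal to $\pm I_2$ being absorbed via Proposition~\ref{thm:-I_2}) and $C:=AB^{-1}$ diagonal in odd pow
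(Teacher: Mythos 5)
Your part (i) and the $\mathrm{ord}(\lambda)$-odd portion of part (ii) are correct; in particular the splitting $A=\lambda I_d\oplus\lambda I_{n-d}$ into two odd-size scalar blocks of determinant one is a clean way to get the characteristic-$2$ bound of two, equivalent in substance to the paper's direct pairing of exponents. The genuine gap is the final subcase $d=\mathrm{ord}(\lambda)\ge 4$ even with $\chara(\field)\ne 2$: your argument breaks off mid-sentence, and this is exactly where the content of the bound ``four'' lives. Even granting the sketched plan, concrete obstacles remain. The $2\times 2$ blocks of $C=AB^{-1}$ as you have set them up are $\diag(\lambda^{1-2i},\lambda^{1+2i})$, of determinant $\lambda^{2}\ne 1$; they are not in $SL_2(\field)$, so neither Corollary~\ref{lem:hou} nor Proposition~\ref{thm:SL2} applies blockwise. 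You must first permute the diagonal of $C$ to pair exponent $j$ with $n-j$, and the resulting entries are \emph{odd} powers of $\lambda$, which need not be perfect squares: each nonscalar block then already costs two commutators via Proposition~\ref{thm:SL2}, a re-paired block can equal $-I_2$ even when $-1$ is not a square (e.g.\ $\lambda$ of order $6$ in $\field_7$), in which case Proposition~\ref{-1perfectsquare} is unavailable and Proposition~\ref{thm:-I_2} charges three commutators to that block alone. Showing the grand total never exceeds $2+3=5$ but in fact stays at four requires exactly the case analysis you have omitted. Separately, $|\field|=5$ is excluded from Proposition~\ref{-1perfectsquare} (and Lemma~\ref{prop:perfsq}), yet for $|\field|=5$ and $d=4$ the block $-I_2=\lambda^{2}I_2$ does occur in your $B$; the paper isolates this field in Lemma~\ref{F5}, while your parenthetical invokes that lemma only in the $d=2$ case.

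For comparison, the paper's proof of the even case first disposes of $|\field|=5$ via Lemma~\ref{F5}, then works throughout with blocks $\Lambda_j:=\diag(\lambda^j,\lambda^{n-j})\in SL_2(\field)$, placing the even (perfect-square) exponents in a factor permutation-similar to $I_2\oplus\bigoplus\Lambda_{2i}$ --- a single commutator unless some $\Lambda_{2i}=-I_2$, in which case $-1$ is automatically a square and Proposition~\ref{-1perfectsquare} gives two --- and bounds the odd-exponent factor by three commutators via Theorem~\ref{cor:SL2}, so that both branches of the accounting ($1+3$ and $2+2$) land on four. Completing your subcase along these lines is feasible, but it is not a cosmetic fix: the missing bookkeeping is the heart of the proof.
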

\begin{proof}
    Let $A\in SL_n(\field)$ such that $A=\lambda I_n$ for some $\lambda\in\field$. For each $i\in \{0,1,\ldots,n\}$, define $\Lambda_i:=\textup{diag}(\lambda^i,\lambda^{n-i})$. Since $\lambda^n=\det(A)=1$, $\Lambda_i\in SL_2(\mathbb F)$ and $\Lambda_i\Lambda_{n-i}=I_2$.
    
    Suppose $n=2k+1$ for some $k\in\mathbb N$. Let $b:=\lambda^{\frac{n+1}{2}}\in\field$. 
    For each $i$, $\lambda^i=(\lambda^n\lambda)^i=(\lambda^{\frac{n+1}{2}})^{2i}=b^{2i}=(b^i)^2.$ Proposition \ref{id} or Corollary \ref{lem:hou} implies that $\Lambda_i$ is a commutator of ${\cal U}_2$-matrices except possibly when $\lambda^i=-1$. However, in such a case, $1=(\lambda^n)^i=(\lambda^i)^n=(-1)^n=-1$, and so $\Lambda_i=I_2$ which is a commutator of ${\cal U}_2$-matrices by Proposition \ref{id}. In any case, $\Lambda_i$ is a commutator of ${\cal U}_2$-matrices. Now, note that
    \[A= \left[\left(\bigoplus\limits_{i=1}^k \Lambda_i \right)\oplus [1]\right] \left[\left(\bigoplus\limits_{i=1}^k\lambda\Lambda_{n-i}\right)\oplus [\lambda]\right].\]
Moreover, the second factor above is permutation similar to the first factor.
   By the preceding remarks and Propositions \ref{id}-\ref{prop:I_2+prod}, $A$ is a product of at most two commutators of $\mathcal U_2$-matrices.
   
% \color{red} $|\field|\ne5$??\color{black}
 
    Suppose $n=2k$ for some $k\in\mathbb N$. If $|\mathbb F|=5$, then the claim follows from Lemma \ref{F5}. Assume $|\mathbb F|\neq 5$. Note that
    \[
    A= \left(\bigoplus\limits_{i=1}^k \Lambda_{2i-1}  \right)\left(\bigoplus\limits_{i=1}^k\lambda\Lambda_{n-(2i-1)}\right).\]
Let $B$ and $C$ be respectively the first and second factors above. Observe that $C$ is permutation similar to
    \begin{equation} \label{scalarevenreducsecond}
        I_2\oplus\left(\bigoplus\limits_{i=1}^{k-1}\Lambda_{2i}\right).
    \end{equation}
Moreover, the diagonal entries of $\Lambda_{2i}$ are perfect squares. We consider cases depending on $\textup{char}(\mathbb F)$. If $\textup{char}(\mathbb F)=2$, then the diagonal entries (including possibly $-1=1$) of $\Lambda_{2i-1}$ are perfect squares due to Lemma \ref{prop:finperf}. Hence, $A$ is a product of at most two commutators of ${\cal U}_2$-matrices (namely, $B$ and $C$) due to Propositions \ref{id}-\ref{prop:I_2+prod} and Corollary \ref{lem:hou}. Assume $\textup{char}(\mathbb F)\neq 2.$ Suppose that in \eqref{scalarevenreducsecond}, $\Lambda_{2i}\neq -I_2$ for all $i\in\{1,\ldots,k-1\}.$ Then $C$ is a commutator of ${\cal U}_2$-matrices by Propositions \ref{id}-\ref{prop:I_2+prod} and Corollary \ref{lem:hou}. On the other hand, $B$ is a product of at most three commutators of ${\cal U}_2$-matrices due to Propositions \ref{id}-\ref{prop:I_2+prod} and Theorem \ref{cor:SL2}. The claim about $A$ follows in this case. For the remaining case, suppose that in \eqref{scalarevenreducsecond}, $\Lambda_{2i}=-I_2$ for some $i\in \{1,\ldots, k-1\}$. In particular, $-1=a^2$ where $a=\lambda^i$. By Proposition \ref{-1perfectsquare}, all occurrences of $-I_2$ in $B$ and in \eqref{scalarevenreducsecond} are products of two commutators of ${\cal U}_2$-matrices. The remaining direct summands in $B$ and in \eqref{scalarevenreducsecond} are either $I_2$ or nonscalar matrices in $SL_2(\mathbb F).$ Hence, both $B$ and $C$ are products of at most two commutators of ${\cal U}_2$-matrices due to Propositions \ref{id}-\ref{prop:I_2+prod} and \ref{thm:SL2}. This proves the claim about $A$.\end{proof}

 Combining Propositions \ref{mainnonscalar} and \ref{mainscalar}, we now have the main result of this section and a generalization of \cite[Theorem 1.1]{hou}.
 
\begin{theorem} \label{mainthm}
    Let $n>2$ and $|\mathbb F|\geq 4$. Every matrix in $SL_n(\field)$ is a product of at most four commutators of $\mathcal U_2$-matrices.
\end{theorem}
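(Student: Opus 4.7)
The plan is to observe that the theorem follows immediately from a case split on whether the matrix $A\in SL_n(\field)$ is scalar or nonscalar, together with the two main Propositions \ref{mainnonscalar} and \ref{mainscalar} that have already been established.

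More precisely, I would fix $A\in SL_n(\field)$ with $n>2$ and $|\field|\geq 4$. If $A$ is nonscalar, then Proposition \ref{mainnonscalar} directly yields a factorization of $A$ into at most four commutators of $\mathcal U_2$-matrices. If $A$ is scalar, then Proposition \ref{mainscalar} applies (regardless of whether $n$ is even or odd, and indeed giving the sharper bound of two in the odd case or in the even case with $\chara(\field)=2$) and again produces a factorization into at most four commutators of $\mathcal U_2$-matrices. Taking the worse of the two bounds yields the claim.

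The real work has already been absorbed into the supporting results: Proposition \ref{mainnonscalar} used Sourour's factorization \cite[Theorem 1]{sourour} to split a nonscalar $A$ as a product $A=BC$ with $\sigma(B)=\sigma(C)=\{1\}$, then handled each unipotent factor via Propositions \ref{prop:I_2+prod}, \ref{thm:SL2}, and \ref{lem:Jn(1)}; Proposition \ref{mainscalar} treated scalar matrices $\lambda I_n$ by pairing diagonal entries into $2\times 2$ blocks $\Lambda_i=\diag(\lambda^i,\lambda^{n-i})$ and carefully tracking when these blocks equal $-I_2$ or are perfect-square diagonal matrices, with the field of size $5$ handled separately in Lemma \ref{F5}. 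So at the level of Theorem \ref{mainthm} itself, no further obstacle remains; the main expository point is simply to invoke the two propositions and note that every element of $SL_n(\field)$ is either scalar or nonscalar, so one of the two bounds must apply.

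Consequently, the proof is essentially a one-line synthesis: \emph{Every $A\in SL_n(\field)$ is either scalar (covered by Proposition \ref{mainscalar}) or nonscalar (covered by Proposition \ref{mainnonscalar}); in either case, $A$ is a product of at most four commutators of $\mathcal U_2$-matrices.}
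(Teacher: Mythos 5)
Your proposal is correct and matches the paper's own proof, which is exactly the one-line synthesis of Proposition \ref{mainnonscalar} (nonscalar case) and Proposition \ref{mainscalar} (scalar case, noting $|\field|\geq 4$ and $n>2$ so the even-$n$ bound of four is the worst case). No further comment is needed.
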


Proposition \ref{rmk:houtowang} and Theorems \ref{cor:SL2} and \ref{mainthm} imply an analogue of \cite[Theorem 3.5]{wangwu}.

\begin{corollary}\label{cor:wangwu}
    Let $|\mathbb F|\geq 4$. Every matrix in $SL_2(\mathbb F)$ is a product of at most six ${\cal U}_2$-matrices. If $n>2$, then every matrix in $SL_n(\mathbb F)$ is a product of at most eight ${\cal U}_2$-matrices.
\end{corollary}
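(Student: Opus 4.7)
The plan is to observe that this corollary is an immediate consequence of results already established in the paper, so the proof is essentially a bookkeeping exercise combining three facts. First I would invoke Theorem \ref{cor:SL2} for the $n=2$ case and Theorem \ref{mainthm} for the $n>2$ case to bound the number of commutator factors needed to write an arbitrary element of $SL_n(\mathbb F)$: at most three commutators of $\mathcal U_2$-matrices when $n=2$, and at most four when $n>2$.

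Next I would apply Proposition \ref{rmk:houtowang}, which says that any single commutator of $\mathcal U_2$-matrices can itself be rewritten as a product of at most two $\mathcal U_2$-matrices. Substituting this expansion into each commutator factor in the decomposition from the previous step converts a product of $r$ commutators into a product of at most $2r$ unipotent matrices of index $2$. Taking $r=3$ yields the upper bound of six $\mathcal U_2$-matrices for $SL_2(\mathbb F)$, and $r=4$ yields the upper bound of eight $\mathcal U_2$-matrices for $SL_n(\mathbb F)$ with $n>2$.

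There is essentially no obstacle here, since both the commutator-decomposition results and the commutator-to-unipotent expansion are already available as black boxes. The only minor subtlety is that Proposition \ref{rmk:houtowang} gives the bound ``at most two,'' not exactly two, which is why the final counts are phrased as upper bounds of six and eight rather than exact values. No additional case analysis, no assumption on $\chara(\mathbb F)$, and no further algebraic manipulation are required.
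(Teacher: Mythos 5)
Your proposal is correct and matches the paper's own (implicit) proof exactly: the corollary is stated as a direct consequence of Proposition \ref{rmk:houtowang} together with Theorems \ref{cor:SL2} and \ref{mainthm}, giving $2\cdot 3=6$ and $2\cdot 4=8$ factors respectively. Nothing further is needed.
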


%%%%%%%%%%%%%%%%%%%%%%%%%%%%%%%%%%%%%%%%%%%%%%%%%%%%%%%%%%%%%%%%%%%%%%
%%%%%%%%%%%%%%%%%%%%%%%%%%%%%%%%%%%%%%%%%%%%%%%%%%%%%%%%%%%%%%%%%%%%%%
%%%%%%%%%%%%%%%%%%%%%%%%%%%REDUCTION%%%%%%%%%%%%%%%%%%%%%%%%%%%%%%%%%%
%%%%%%%%%%%%%%%%%%%%%%%%%%%%%%%%%%%%%%%%%%%%%%%%%%%%%%%%%%%%%%%%%%%%%%
%%%%%%%%%%%%%%%%%%%%%%%%%%%%%%%%%%%%%%%%%%%%%%%%%%%%%%%%%%%%%%%%%%%%%%
\section{Reduction of commutator factors}\label{sec:reduc}

In this section, we consider sufficient conditions on  $\mathbb F$ (e.g., conditions about $|\mathbb F|$ or $\chara(\field)$) that improve the upper bound on the commutator factors in Theorem \ref{mainthm}.

\begin{lemma} \label{prop:pairsfin}
    Let $|\field|\notin\{2,3,5\}$ and consider $S=\{a^2| a\in\field\setminus\{0\}\}$. Then $S$ can be written as a disjoint union of sets in the following manner:
    \begin{equation}\label{eq:pair}
        S=E\cup\{\alpha_1,\alpha_1^{-1},\alpha_2,\alpha_2^{-1},...\}
    \end{equation}
    where $\alpha_1,\alpha_1^{-1},\alpha_2,\alpha_2^{-1},...$ are distinct and $E$ is defined as
    \begin{center}
        $E=\begin{cases}
            \{1\} & \textup{if $\chara(\field)=2$}\\
            \{1\} & \textup{if $\chara(\field)\ne2$ and $-1\notin S$}\\
            \{-1,1\} & \textup{if $\chara(\field)\ne2$ and $-1\in S$}.
        \end{cases}$
    \end{center}
    If $\field$ is infinite, then $S\setminus E$ is infinite. If $\field$ is finite, then $S\setminus E$ is finite and
    \[
    S\setminus E=\{\alpha_1,\alpha_1^{-1},...,\alpha_k,\alpha_k^{-1}\}
    \]
    for some $k\in\mathbb N$ where
    \begin{enumerate}[(i)]
                \item $k=\frac{|\field|-2}{2}$ if $\chara(\field)=2$;
                \item $k=\frac{|\field|-3}{4}$ if $\chara(\field)\ne2$ and $-1\notin S$, and
                \item $k=\frac{|\field|-5}{4}$ if $\chara(\field)\ne2$ and $-1\in S$.
            \end{enumerate}
\end{lemma}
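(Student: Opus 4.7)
The plan is straightforward: pair off elements of $S$ with their multiplicative inverses, and identify exactly which elements are fixed by inversion.

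First I would verify that $S$ is closed under taking inverses: if $\alpha=a^2\in S$ with $a\in\field\setminus\{0\}$, then $\alpha^{-1}=(a^{-1})^2\in S$. Hence the map $\alpha\mapsto\alpha^{-1}$ is an involution on $S$. The next step is to determine its fixed points, namely the $\alpha\in S$ with $\alpha^2=1$, equivalently $\alpha=\pm1$. When $\chara(\field)=2$ we have $-1=1$ so the only fixed point is $1$, giving $E=\{1\}$. When $\chara(\field)\ne2$, both $+1$ and $-1$ are fixed points, but $-1$ actually lies in $S$ only in the third case, so $E$ is as defined in the statement.

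With $E$ identified, every $\alpha\in S\setminus E$ satisfies $\alpha\ne\alpha^{-1}$, so the involution $\alpha\mapsto\alpha^{-1}$ has no fixed points on $S\setminus E$ and partitions it into two-element orbits $\{\alpha,\alpha^{-1}\}$. Choosing one representative from each orbit yields the claimed disjoint union \eqref{eq:pair}.

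For the cardinality count when $\field$ is finite, I would apply Lemma \ref{prop:finperf}. In characteristic $2$ we have $|S|=|\field|-1$ and $|E|=1$, so $|S\setminus E|=|\field|-2$ and hence $k=(|\field|-2)/2$. In characteristic $\ne2$ we have $|S|=(|\field|-1)/2$, and $|E|$ is $1$ or $2$ depending on whether $-1\in S$, yielding $k=(|\field|-3)/4$ or $k=(|\field|-5)/4$, respectively. A brief sanity check is that each such $k$ is a nonnegative integer: in the last two cases one uses that for a finite field of odd order $q$, $-1\in S$ iff $q\equiv 1\pmod 4$, so the numerator is always divisible by $4$; and the assumption $|\field|\notin\{2,3,5\}$ ensures $k\geq 1$, i.e.\ $S\setminus E\ne\emptyset$. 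For the infinite case, the squaring map $a\mapsto a^2$ on $\field\setminus\{0\}$ has fibers of size at most $2$, so $|S|$ is infinite whenever $\field$ is infinite, and thus so is $S\setminus E$.

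There is no real obstacle here; the only minor care needed is in the bookkeeping of the three subcases for $E$ and in checking divisibility of the counts, which reduces to the classical fact about when $-1$ is a square in a finite field of odd order.
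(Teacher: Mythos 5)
Your proposal is correct and follows essentially the same route as the paper: pair each element of $S$ with its inverse, identify $E$ as the fixed points of inversion lying in $S$, and count via Lemma \ref{prop:finperf}. The only difference is cosmetic: the paper establishes $S\setminus E\neq\varnothing$ directly from Lemma \ref{prop:perfsq}, whereas you deduce $k\geq 1$ arithmetically using the fact that $-1$ is a square in $\field$ of odd order $q$ exactly when $q\equiv 1\pmod 4$; both are valid.
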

\begin{proof}
  Since $|\field|\notin\{2,3,5\}$, Lemma \ref{prop:perfsq} implies that there exists $\beta\in S$ such that $\beta\ne\beta^{-1}$. In particular, $\beta\notin\{1,-1\}$, and so $\beta\notin E$. This guarantees that $S\setminus E\ne\varnothing$. Both $\alpha,\alpha^{-1}\in S$ since $S$ is a multiplicative group. We claim that $\alpha\in S\setminus E$ if and only if $\alpha^{-1}\in S\setminus E.$ Indeed, suppose $\alpha \in S\setminus E$. If, on the contrary, $\alpha^{-1}\in E,$ then we arrive at a contradiction in each possible case of $E$ as defined; hence, $\alpha^{-1}\in S\setminus E$. The converse is analogous. Moreover, if $\alpha\in S\setminus E$, then $\alpha\neq \alpha^{-1}.$ Otherwise, $\alpha\in\{-1,1\}$; checking each case of $E$ yields a contradiction. Therefore, we have the disjoint union $S=E\cup (S\setminus E)$, where $\alpha\in S\setminus E$ if and only if $\alpha^{-1}\in S\setminus E$ and $\alpha\ne\alpha^{-1}$.

   % Let $\alpha\in T=S\setminus E$. Then $\alpha\ne\pm1$, and so $\alpha\neq \alpha^{-1}$. Since $S$ is a multiplicative group, $\alpha^{-1}\in S$. Note that $\alpha^{-1}\in T$. Conversely, if $\alpha\in S$ such that $\alpha\ne\alpha^{-1}$ and $\alpha^{-1}\in T$, then $\alpha^2\ne\pm1$ and thus, $\alpha\ne\pm1$. Consequently, it follows that $\alpha\in T$. Therefore, $S=E\cup T$, where $\alpha\in T$ if and only if $\alpha^{-1}\in T$ and $\alpha\ne\alpha^{-1}$.

    If $\field$ is infinite, then $S$ and $S\setminus E$ are infinite. Assume $\mathbb F$ is finite. By Lemma \ref{prop:finperf}, $S$ is also a finite set. There exists $k\in\mathbb N$ such that 
    \begin{equation} \label{eq:finpair}
        S=E\cup\{\alpha_1,\alpha_1^{-1},...,\alpha_k,\alpha_k^{-1}\}
    \end{equation}
    where $E\cap\{\alpha_1,\alpha_1^{-1},...,\alpha_k,\alpha_k^{-1}\}=\varnothing$ and $\alpha_1,\alpha_1^{-1},...,\alpha_k,\alpha_k^{-1}$ are distinct. If $|\mathbb F|$ is even, then by definition of $E$ and Lemma \ref{prop:finperf}, $|\mathbb F|-1=|S|=1+2k$. Hence, $k=\frac{|\mathbb{F}|-2}{2}$. If $|\mathbb{F}|$ is odd, then $\frac{|\mathbb{F}|-1}{2}=|S|=|E|+2k$ by Lemma \ref{prop:finperf}, and so $k=\frac{|\mathbb F|-1-2|E|}{4}$. The claim follows by considering cases for $E.$\end{proof}

Let $\lfloor x\rfloor$ denote the floor function of $x\in\mathbb R$.
\begin{proposition} \label{mainlem2} 
        Every nonscalar matrix in $SL_n(\mathbb F)$ is a product of at most two commutators of $\mathcal U_2$-matrices provided that any of the following conditions hold:
        \begin{enumerate}[(i)]
            \item $|\field|\geq2\lfloor\frac{n}{2}\rfloor+2$ if $\chara(\field)=2$; 
            \item $|\field|\geq4\lfloor\frac{n}{2}\rfloor+3$ if $\chara(\field)\ne2$ and $-1\ne \alpha^2$ for all $\alpha\in\field$;
            \item $|\field|\geq4\lfloor\frac{n}{2}\rfloor+5$ if $\chara(\field)\ne2$ and $-1=\alpha^2$ for some $\alpha\in\field$.
        \end{enumerate}
        In particular, if $|\field|\geq4\lfloor\frac{n}{2}\rfloor+5$, then every nonscalar matrix in $SL_n(\mathbb F)$ is a product of at most two commutators of ${\cal U}_2$-matrices.
    \end{proposition}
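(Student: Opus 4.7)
The plan is to invoke Sourour's factorization theorem as in Propositions \ref{thm:SL2} and \ref{mainnonscalar}, but this time to choose the spectra of the two factors carefully enough that \emph{each} factor is itself a single commutator of $\mathcal{U}_2$-matrices rather than merely a product of several. By \cite[Theorem 1]{sourour}, any nonscalar $A \in SL_n(\mathbb{F})$ can be written as $A = BC$ with $B, C \in GL_n(\mathbb{F})$ having any prescribed spectra, subject only to $\det(B)\det(C) = 1$.

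First I would apply Lemma \ref{prop:pairsfin} to extract $m := \lfloor n/2 \rfloor$ pairs $(\alpha_i, \alpha_i^{-1})$ of \emph{distinct} elements lying in $S \setminus E$. The three numerical hypotheses (i)--(iii) are designed precisely so that the value of $k$ computed in Lemma \ref{prop:pairsfin} satisfies $k \geq m$ in the finite case; in the infinite case infinitely many such pairs are available. I would then set
\[
\Lambda := \{\alpha_1,\alpha_1^{-1},\ldots,\alpha_m,\alpha_m^{-1}\} \quad\text{if $n$ is even}, \qquad \Lambda := \{1\}\cup\{\alpha_1,\alpha_1^{-1},\ldots,\alpha_m,\alpha_m^{-1}\} \quad\text{if $n$ is odd},
\]
a list of $n$ pairwise distinct elements of $\mathbb{F}$ whose product is $1$. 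Sourour's theorem then furnishes $B, C \in GL_n(\mathbb{F})$ with $\sigma(B) = \sigma(C) = \Lambda$ and $A = BC$.

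Since the eigenvalues in $\Lambda$ are distinct, $B$ and $C$ are diagonalizable, and after a similarity each equals $\bigoplus_{i=1}^{m}\textup{diag}(\alpha_i,\alpha_i^{-1})$ (prepended by a $1\times 1$ block $[1]$ when $n$ is odd). For every $i$, $\alpha_i \in S \setminus E$ implies $\alpha_i \in \mathbb{F}\setminus\{-1,0,1\}$ and $\alpha_i$ is a perfect square, so Corollary \ref{lem:hou} makes each diagonal block $\textup{diag}(\alpha_i,\alpha_i^{-1})$ a commutator of $\mathcal{U}_2$-matrices. Proposition \ref{prop:I_2+prod} then assembles these blocks (together with the optional $[1]$ summand handled in the ``$I_m \oplus B$'' clause) into a single commutator representation of $B$, and likewise for $C$. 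Proposition \ref{prop:rmk2.1} transports the conclusion back across the similarity, yielding that $A = BC$ is a product of at most two commutators of $\mathcal{U}_2$-matrices. The closing ``in particular'' is immediate because $4\lfloor n/2\rfloor + 5$ dominates the thresholds appearing in (i) and (ii).

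The only real obstacle is bookkeeping rather than conceptual: one must verify that the constants $2\lfloor n/2\rfloor + 2$, $4\lfloor n/2\rfloor + 3$, and $4\lfloor n/2\rfloor + 5$ are exactly the translations of the inequality $k \geq \lfloor n/2 \rfloor$ under the three formulas for $k$ in Lemma \ref{prop:pairsfin}, and that the odd-$n$ case with its extra $[1]$-block is absorbed cleanly by Proposition \ref{prop:I_2+prod}. Once this matching is made explicit, the rest of the argument is routine.
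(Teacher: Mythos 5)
Your proposal follows essentially the same route as the paper's proof: Sourour's factorization with both spectra chosen as $\lfloor n/2\rfloor$ distinct inverse pairs from $S\setminus E$ (plus $1$ when $n$ is odd), whose availability under hypotheses (i)--(iii) is exactly what Lemma \ref{prop:pairsfin} provides, followed by Corollary \ref{lem:hou} on each $2\times 2$ block and Propositions \ref{prop:rmk2.1}--\ref{prop:I_2+prod} to assemble each factor into a single commutator. The bookkeeping you defer does check out, so the argument is correct as outlined.
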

    \begin{proof}        
        Suppose $n=2k$ or $n=2k+1$ for some $k\in\mathbb N$. Let $S$ and $E$ be as in Proposition \ref{prop:pairsfin}. If $\mathbb F$ is infinite, then $S\setminus E$ is infinite, and so $2k\leq |S\setminus E|$. If $\mathbb F$ is finite, then assumptions (i)-(iii) guarantee that $2k\leq |S\setminus E|$. In any case, there exist distinct $\alpha_1,\alpha_1^{-1}...,\alpha_k,\alpha_k^{-1}\in S\setminus E$. By \cite[Theorem 1]{sourour}, $A=BC$ for some $B$ and $C$ where $\sigma(B)$ and $\sigma(C)$ can be specified provided that $\det(B)\det(C)=\det(A)=1$. 
    If $n=2k$, let 
    $\sigma(B)=\sigma(C)=\{\alpha_1,\alpha_1^{-1},...,\alpha_k,\alpha_k^{-1}\}$ while if $n=2k+1$, let $\sigma(B)=\sigma(C)=\{1,\alpha_1,\alpha_1^{-1},...,\alpha_k,\alpha_k^{-1}\}$. Since the eigenvalues of $B$ and $C$ are distinct, both matrices are similar to $\diag(\alpha_1,\alpha_1^{-1},...,\alpha_k,\alpha_k^{-1})$ (if $n=2k$) or $\diag(1,\alpha_1,\alpha_1^{-1},...,\alpha_k,\alpha_k^{-1})$ (if $n=2k+1$). For all $i\in\{1,...,k\}$, note that $\alpha_i\in S$ and so by Corollary $\ref{lem:hou}$, $\diag(\alpha_i,\alpha_i^{-1})$ is a commutator of $\mathcal U_2$-matrices. It follows that $B$ and $C$ are commutators of $\mathcal U_2$-matrices by Propositions \ref{prop:rmk2.1}-\ref{prop:I_2+prod}. Hence, $A$ is a product of at most two commutators of $\mathcal U_2$-matrices.   \end{proof}
     %   Suppose $n=2k$ or $n=2k+1$ for some $k\in\mathbb N$. Let $S$ and $E$ be as in Proposition \ref{prop:pairsfin}. If $\chara(\field)=2$, then $k=\lfloor\frac{n}{2}\rfloor\leq\frac{q-2}{2}$ by (a). If $\chara(\field)\ne2$ and $-1\notin S$, then $k=\lfloor\frac{n}{2}\rfloor\leq\frac{q-3}{4}$ by (b). If instead we have $\chara(\field)\ne2$ and $-1\in S$, we then get $k=\lfloor\frac{n}{2}\rfloor\leq\frac{q-5}{4}$ by (c). In each of the cases, we can choose distinct $\alpha_1,\alpha_1^{-1},...,\alpha_k,\alpha_k^{-1}\in S\setminus E$ by Proposition \ref{prop:pairsfin}. By \cite[Theorem 1]{sourour}, we have $A=BC$ for some $B,C\in GL_n(\field)$ with $\sigma(B)=\sigma(C)=\{\alpha_1,\alpha_1^{-1},...,\alpha_k,\alpha_k^{-1}\}$ if $n=2k$ while $\sigma(B)=\sigma(C)=\{1,\alpha_1,\alpha_1^{-1},...,\alpha_k,\alpha_k^{-1}\}$ if $n=2k+1$. Note that $\diag(\alpha_i,\alpha_i^{-1})$ is a commutator of $\mathcal{U}_2$ matrices for all $i\in\{1,...,k\}$ due to Corollary \ref{lem:hou}. Since the eigenvalues of $B$ and $C$ are distinct, both are similar to $\diag(\alpha_1,\alpha_1^{-1},...,\alpha_k,\alpha_k^{-1})$ (if $n=2k$) or $\diag(1,\alpha_1,\alpha_1^{-1},...,\alpha_k,\alpha_k^{-1})$ (if $n=2k+1$). It follows that $B$ and $C$ are commutators of $\mathcal U_2$-matrices by Propositions \ref{prop:rmk2.1}-\ref{prop:I_2+prod}. Consequently, $A$ is a product of two commutators of $\mathcal U_2$-matrices.
  
    %We also provide additional results on scalar matrices in $SL_n(\field)$. 
   % For odd $n$, . 
    The next result improves upon the bound given by Proposition \ref{mainscalar} (ii). %We only consider even $n$ since for odd $n$, every scalar matrix in $SL_n(\field)$ is a product of at most two commutators of $\mathcal{U}_2$-matrices.
    
\begin{proposition} \label{lem:scalars}
    Let $n$ be even and $|\field|>2n+1$. Every scalar matrix in $SL_n(\mathbb F)$ is a product of at most three commutators of $\mathcal U_2$-matrices.
           In addition, if $\field$ is algebraically closed, then every scalar matrix in $SL_n(\mathbb F)$ is a product of at most two commutators of $\mathcal U_2$-matrices. 
\end{proposition}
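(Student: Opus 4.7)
The plan is to establish the three-commutator bound by factoring $A = \lambda I_n$ as $A = M \cdot N$, with $M$ a nonscalar single commutator of $\mathcal{U}_2$-matrices and $N := M^{-1} A$ a nonscalar matrix in $SL_n(\field)$ to which Proposition \ref{mainlem2} will apply, yielding two commutators for $N$ and three in total for $A$. Since Proposition \ref{mainscalar}(ii) already gives two commutators when $\chara(\field) = 2$, I would assume $\chara(\field) \neq 2$, so that $|\field|$ is odd and at least $2n+3$. I would pick any nonzero square $\mu \neq \pm 1$---guaranteed by Lemma \ref{prop:pairsfin}---and set $M := \diag(\mu, \mu^{-1}) \oplus I_{n-2}$; then Corollary \ref{lem:hou} together with Proposition \ref{prop:I_2+prod} makes $M$ a single commutator, and $M$ is nonscalar because $\mu^2 \neq 1$.

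The main step will be verifying Proposition \ref{mainlem2}'s hypothesis for $N = \lambda M^{-1}$. If $-1$ is not a square in $\field$, condition (ii) holds immediately since $|\field| \geq 2n+3 = 4\lfloor n/2\rfloor + 3$. If $-1$ is a square, then $|\field| \equiv 1 \pmod 4$; since $n$ is even, $2n+3 \equiv 3 \pmod 4$, which rules out $|\field| = 2n+3$ and forces $|\field| \geq 2n+5 = 4\lfloor n/2\rfloor+5$, verifying condition (iii). Either way, $N$ is a product of at most two commutators, so $A = M \cdot N$ is a product of at most three.

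For the algebraically closed case, I would again cite Proposition \ref{mainscalar}(ii) in characteristic $2$; otherwise $\field$ is infinite and every element is a square, and the goal is to refine the construction so that $N$ itself becomes a single commutator. Writing $n = 2k$, I would pick $\mu_1 \in \field \setminus \{0\}$ avoiding the at most $4k$ values for which $\mu_i := \lambda^{2(i-1)}\mu_1$ or $c_i := \lambda^{2i-1}\mu_1$ equals $\pm 1$ for some $i \in \{1,\ldots,k\}$, and take $M := \bigoplus_{i=1}^{k} \diag(\mu_i, \mu_i^{-1})$. The key point is the cyclic recursion $\mu_{i+1} = \lambda^2 \mu_i$, which closes via $\lambda^n = 1$ and yields $\lambda\mu_{i+1}^{-1} = (\lambda\mu_i)^{-1}$; pairing the diagonal entries of $N = \lambda M^{-1}$ at positions $(2,3), (4,5), \ldots, (2k-2, 2k-1), (2k, 1)$ then exhibits $N$ as permutation similar to $\bigoplus_{i=1}^{k}\diag(c_i, c_i^{-1})$. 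Since each $c_i$ is a nonzero square distinct from $\pm 1$, this form is a single commutator, and $A = M \cdot N$ is a product of at most two.

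The hardest point will be the parity argument underlying the first part: without the observation that $n$ even makes $2n+3 \equiv 3 \pmod 4$, thereby excluding $|\field| = 2n+3$ when $-1$ is a square in $\field$, the hypothesis $|\field| > 2n+1$ would be short of Proposition \ref{mainlem2}'s condition (iii) by two elements. A secondary subtlety in the algebraically closed refinement will be confirming that the cyclic pairing closes consistently around the indices, which relies precisely on $\lambda^n = 1$.
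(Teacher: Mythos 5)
Your argument is correct, and it reaches both bounds by a route that differs from the paper's in a meaningful way. For the three-commutator bound the paper factors $\lambda I_n$ explicitly as $BC$ with $B=\bigoplus_{i=1}^{k}\lambda^{2i-1}D$ and $C=\bigoplus_{i=1}^{k}\lambda^{2-2i}D^{-1}$, where $D=\diag(a^2,a^{-2})$ and $a$ is chosen with $a^{2n}\neq 1$ (possible since $|\field|-1>2n$); this choice forces every $2\times 2$ block of $C$ to be $\diag(\mu,\mu^{-1})$ with $\mu$ a square different from $-1$, so $C$ is a single commutator by Corollary \ref{lem:hou}, while each block of $B$ avoids $-I_2$ and is handled by Proposition \ref{thm:SL2}, giving at most two more. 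You instead peel off a single explicit commutator $M=\diag(\mu,\mu^{-1})\oplus I_{n-2}$ and apply Proposition \ref{mainlem2} to the nonscalar remainder $N=\lambda M^{-1}$; the price is verifying the cardinality hypotheses of that proposition, which you do correctly via the observation that $-1\in S$ forces $|\field|\equiv 1\pmod 4$ while $2n+3\equiv 3\pmod 4$ for even $n$ --- a step the paper's construction avoids entirely. For the algebraically closed refinement the two arguments converge in spirit: the paper upgrades each block $\lambda^{2i-1}D$ of $B$ to a commutator because $\lambda^{2i-1}$ is a square, and your cyclic pairing (which closes precisely because $\lambda^n=1$) exhibits the remainder $N$ as permutation similar to $\bigoplus_{i}\diag(c_i,c_i^{-1})$ with each $c_i$ a square avoiding $\pm 1$; both yield a single commutator for the second factor. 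Your version buys a shorter scalar-specific computation at the cost of importing the Sourour-based Proposition \ref{mainlem2} and the quadratic-residue parity argument; the paper's version is self-contained at the $2\times 2$ block level and, incidentally, also covers characteristic $2$ without splitting it off.
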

\begin{proof}
    Let $A\in SL_n(\field)$ such that $A=\lambda I_n$ for some $\lambda\in\field$. Define $\Lambda_i:=\diag(\lambda^i,\lambda^{n-i})$ for $i\in\{0,1,...,n-1\}$. Since $\lambda^n=\det(A)=1$, $\Lambda_i\in SL_n(\mathbb F)$ and $\Lambda_i\Lambda_{n-i}=I_2.$ Assume $n=2k$ for some $k\in\mathbb N$.
    
    Since $|\field\setminus\{0\}|=|\mathbb F|-1>2n$, there exists $a\in\field\setminus\{0\}$ such that $a^{2n}\ne1$. Set $D:=\textup{diag}(a^2,a^{-2}).$ Note that
    \[A=\left(\bigoplus\limits_{i=1}^k\lambda^{2i-1}D\right)\left(\bigoplus\limits_{i=1}^k\lambda^{2-2i}D^{-1}\right).\]
    Let $B$ and $C$ be respectively the first and second factors above. By permuting diagonal entries, $B$ is similar to $\bigoplus\limits_{i=1}^k\Lambda_{2i-1}D$ while $C$ is similar to $\bigoplus\limits_{i=1}^k\Lambda_{2i-2}^{-1}D^{-1}.$ Now, observe that for all $i\in\{1,\ldots,k\}$, $\lambda^{2-2i}a^{-2}\ne-1$ and $\lambda^{2i-1}a^2\ne-1$. Otherwise, since $n$ is even, either $a^{2n}=(-\lambda^{2-2i})^n=(\lambda^n)^{2-2i}=1$ or $a^{2n}=(-\lambda^{1-2i})^n=(\lambda^n)^{1-2i}=1$, contradicting the choice of $a.$ Hence, for all $i\in\{1,...,k\}$, $\Lambda_{2i-2}^{-1}D^{-1}$ is a commutator of $\mathcal U_2$-matrices by Proposition \ref{id} or Corollary \ref{lem:hou}. By Propositions \ref{prop:rmk2.1}-\ref{prop:I_2+prod}, $C$ is a commutator of $\mathcal U_2$-matrices. On the other hand, for all $i\in \{1,\ldots,k\}$, $\Lambda_{2i-1}D\in SL_2(\field)\setminus\{-I_2\}$. By Propositions \ref{id}-\ref{prop:I_2+prod} and \ref{thm:SL2}, $B$ is a product of at most two commutators of $\mathcal U_2$-matrices. Hence, $A$ is a product of at most three commutators of ${\cal U}_2$-matrices.
    
    Suppose $\field$ is algebraically closed. For each $i\in\{1,\ldots,k\}$, there exists $r_i\in\mathbb F$ such that $r_i^2=\lambda^{2i-1}$. By Proposition \ref{id} or Corollary \ref{lem:hou}, $\Lambda_{2i-1}D$ is a commutator of $\mathcal U_2$-matrices for all $i\in\{1,...,k\}$. Hence, $B$ is a commutator of $\mathcal U_2$-matrices by Propositions \ref{prop:rmk2.1}-\ref{prop:I_2+prod}. Together with the preceding, $A$ is a product of two commutators of $\mathcal U_2$-matrices.
\end{proof}
%The next result shows that fields with sufficiently many elements reduce the number of commutator factors from Theorem \ref{mainthm} to three.
%    Observe that $4\lfloor\frac{n}{2}\rfloor+5>4\lfloor\frac{n}{2}\rfloor+3>2\lfloor\frac{n}{2}\rfloor+2$ for any $n\in\mathbb N$.
%If $\field$ is infinite, then every matrix in $SL_n(\field)$ is a product of at most three commutators of $\mathcal U_2$-matrices by Proposition \ref{mainlem2} (nonscalar case) and Propositions \ref{mainscalar} and \ref{lem:scalars} (scalar case). 

  We end this paper with the following generalizations of \cite[Theorem 1.1]{hou} and \cite[Theorem 3.5]{wangwu}.
    
    \begin{theorem}\label{mainthm:char2}
        Let $\chara(\field)=2$ and $|\field|\geq2\lfloor\frac{n}{2}\rfloor+2$. Every matrix in $SL_n(\field)$ is a product of at most two commutators of $\mathcal U_2$-matrices and at most four ${\cal U}_2$-matrices.
    \end{theorem}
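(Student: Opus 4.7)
The plan is to combine the main results already proven in the paper: apply Proposition \ref{mainlem2} to handle nonscalar matrices, apply Proposition \ref{mainscalar} to handle scalar matrices, and then convert the commutator factor count into a $\mathcal U_2$-matrix count via Proposition \ref{rmk:houtowang}. The hypothesis $|\field|\geq 2\lfloor n/2\rfloor+2$ is precisely tailored to activate the $\chara(\field)=2$ branch (namely condition (i)) of Proposition \ref{mainlem2}.

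Fix $A\in SL_n(\field)$. If $A$ is nonscalar, then condition (i) of Proposition \ref{mainlem2} applies verbatim, since $\chara(\field)=2$ and $|\field|\geq 2\lfloor n/2\rfloor+2$; hence $A$ is a product of at most two commutators of $\mathcal U_2$-matrices. If $A$ is scalar, I invoke Proposition \ref{mainscalar}, noting that the hypothesis $|\field|\geq 2\lfloor n/2\rfloor+2\geq 4$ is satisfied for every $n\geq 2$. Part (i) of that proposition gives the bound of two commutators when $n$ is odd, while the additional clause in part (ii) (activated by $\chara(\field)=2$) gives the same bound when $n$ is even. Combining the two cases settles the first assertion of the theorem.

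For the second assertion, apply Proposition \ref{rmk:houtowang}, which guarantees that every commutator of $\mathcal U_2$-matrices is a product of at most two $\mathcal U_2$-matrices. Therefore a product of at most two commutators of $\mathcal U_2$-matrices is a product of at most four $\mathcal U_2$-matrices. No significant obstacle is expected, since all substantive work has been carried out in the earlier propositions; the argument is essentially a verification that the stated field-size hypothesis is exactly strong enough to invoke the desired cases of Propositions \ref{mainlem2} and \ref{mainscalar}, followed by a routine application of Proposition \ref{rmk:houtowang}.
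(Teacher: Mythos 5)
Your proposal is correct and follows exactly the paper's own proof: Proposition \ref{mainlem2}(i) for the nonscalar case, Proposition \ref{mainscalar} for the scalar case (part (i) for odd $n$, the $\chara(\field)=2$ clause of part (ii) for even $n$), and Proposition \ref{rmk:houtowang} to double the count into $\mathcal U_2$-matrix factors. You merely spell out the case checks slightly more explicitly than the paper does.
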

\begin{proof}
This follows from Proposition \ref{mainlem2} (nonscalar case) and Proposition \ref{mainscalar} (scalar case). With this, the number of ${\cal U}_2$-matrix factors follow from Proposition \ref{rmk:houtowang}.\end{proof}

    \begin{theorem} \label{mainthm6}
        Let $|\mathbb F|\geq4\lfloor\frac{n}{2}\rfloor+5$. Every matrix in $SL_n(\mathbb F)$ is a product of at most three commutators of $\mathcal U_2$-matrices and at most six ${\cal U}_2$-matrices. In addition, if $\mathbb F$ is algebraically closed, then every matrix in $SL_n(\field)$ is a product of at most two commutators of $\mathcal U_2$-matrices and at most four ${\cal U}_2$-matrices.
    \end{theorem}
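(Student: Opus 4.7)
The plan is to split the argument into the nonscalar and scalar cases, assemble bounds from the propositions already established, and then convert the commutator count into a $\mathcal U_2$-matrix count via Proposition \ref{rmk:houtowang}. The hypothesis $|\mathbb F|\geq 4\lfloor\frac{n}{2}\rfloor+5$ is the strongest of the three quantitative conditions in Proposition \ref{mainlem2}, so it is tailored to dispose of the nonscalar case immediately.

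For the nonscalar case, I would simply invoke the ``in particular'' clause of Proposition \ref{mainlem2}: any nonscalar matrix in $SL_n(\mathbb F)$ is a product of at most two commutators of $\mathcal U_2$-matrices. For the scalar case, I would split on the parity of $n$. When $n$ is odd, Proposition \ref{mainscalar}(i) already gives at most two commutators. When $n$ is even, write $n=2k$; then $4\lfloor n/2\rfloor+5=4k+5$ while $2n+1=4k+1$, so the hypothesis forces $|\mathbb F|>2n+1$ and Proposition \ref{lem:scalars} applies, giving at most three commutators of $\mathcal U_2$-matrices (and at most two if $\mathbb F$ is algebraically closed).

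Combining the two cases, every matrix in $SL_n(\mathbb F)$ is a product of at most three commutators of $\mathcal U_2$-matrices, with the bound improving to two when $\mathbb F$ is algebraically closed (since both the nonscalar bound and the algebraically closed scalar bound are two). Finally, applying Proposition \ref{rmk:houtowang} to each commutator factor converts three commutators into at most six $\mathcal U_2$-matrices and two commutators into at most four $\mathcal U_2$-matrices, which is exactly the claim.

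There is no substantive obstacle here: the theorem is essentially a packaging of Propositions \ref{mainlem2}, \ref{mainscalar}(i), \ref{lem:scalars}, and \ref{rmk:houtowang}. The only point that needs any care is confirming the numerical inequality $4\lfloor n/2\rfloor+5>2n+1$ for even $n$, which ensures that the hypothesis of Proposition \ref{lem:scalars} is actually met by the hypothesis of the theorem; for odd $n$, no such check is needed since the scalar case is handled by Proposition \ref{mainscalar}(i) without any extra assumption on $|\mathbb F|$.
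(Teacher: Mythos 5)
Your proposal is correct and follows essentially the same route as the paper: Proposition \ref{mainlem2} for the nonscalar case, Propositions \ref{mainscalar}(i) and \ref{lem:scalars} for the scalar case (after checking $4\lfloor n/2\rfloor+5>2n+1$), and Proposition \ref{rmk:houtowang} to convert commutator counts into $\mathcal U_2$-matrix counts. Your explicit parity split and verification of the inequality are just a slightly more detailed write-up of the paper's one-line argument.
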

    \begin{proof}
Since $|\field|\geq4\lfloor\frac{n}{2}\rfloor+5>2n+1$, every matrix in $SL_n(\mathbb F)$ is a product of at most three commutators of ${\cal U}_2$-matrices due to Proposition \ref{mainlem2} (nonscalar case) and Propositions \ref{mainscalar} (i) Propositions \ref{lem:scalars} (scalar case). 

If $\mathbb F$ is algebraically closed (necessarily, $\mathbb F$ is infinite), then every matrix in $SL_n(\field)$ is a product of at most two commutators of $\mathcal U_2$-matrices by Proposition \ref{mainlem2} (nonscalar case) and Propositions \ref{mainscalar} (i) and \ref{lem:scalars} (scalar case). 

The claims about the number of ${\cal U}_2$-matrix factors follow from the calculations above and Proposition \ref{rmk:houtowang}.
    \end{proof}

\section*{Declaration of competing interest}
    
\noindent The authors declare that there is no competing interest.

\section*{Data availability}

\noindent No data was used for the research described in this article.

\section*{Acknowledgements}

\noindent This paper is dedicated to Professor Agnes T. Paras (Institute of Mathematics, UP Diliman) on the occasion of her 60th birthday in 2025. This research is supported by the Computational Research Laboratory Research Grant of the Institute of Mathematics, University of the Philippines Diliman.

%    \subsection{Characteristic 2}
   % In this section, we will see that the characteristic of the field also reduces the number of commutator factors in Theorem \ref{mainthm}

%% The Appendices part is started with the command \appendix;
%% appendix sections are then done as normal sections
%%\appendix
%%\section{Example Appendix Section}
%%\label{app1}

%%Appendix text.

%% For citations use: 
%%       \citet{<label>} ==> Lamport [21]
%%       \citep{<label>} ==> [21]
%%
%% Example citation, See \citet{lamport94}.

%% If you have bib database file and want bibtex to generate the
%% bibitems, please use
%%

%% else use the following coding to input the bibitems directly in the
%% TeX file.

%% Refer following link for more details about bibliography and citations.
%% https://en.wikibooks.org/wiki/LaTeX/Bibliography_Management

\end{document}